\documentclass[11pt,letterpaper]{amsart}

\usepackage{graphicx}
\usepackage{amssymb}
\usepackage{amsthm}
\usepackage{amsmath}
\usepackage{stmaryrd}
\usepackage{cite}
\usepackage[mathscr]{eucal}
\usepackage{hyperref}
\usepackage[margin=1.7in]{geometry}
\usepackage{comment}
\usepackage{caption}
\usepackage{subcaption}
\usepackage[permil]{overpic}
\usepackage{url}
\usepackage{placeins}

\title[A Thorpe trick for the Bochner technique]{A Thorpe trick for the Bochner technique}
\author[Matthias Wink]{Matthias Wink}
\address{Department of Mathematics, University of California, Santa Barbara, South Hall 6607, Santa Barbara, CA 93106, USA}
\email{wink@math.ucsb.edu}

\keywords{Betti numbers, Bochner Technique, Sphere Theorems}

\makeatletter
\@namedef{subjclassname@2020}{
\textup{2020} Mathematics Subject Classification}
\makeatother

\subjclass[2020]{53B20, 53C20, 53C21, 53C23, 58A14}

\begin{document}
\newcommand{\Div}{\operatorname{div}}
\newcommand{\Hol} {\operatorname{Hol}}
\newcommand{\diam} {\operatorname{diam}}
\newcommand{\Scal} {\operatorname{Scal}}
\newcommand{\scal} {\operatorname{scal}}
\newcommand{\Ric} {\operatorname{Ric}}
\newcommand{\Hess} {\operatorname{Hess}}
\newcommand{\grad} {\operatorname{grad}}
\newcommand{\Sect} {\operatorname{Sect}}
\newcommand{\Rm} {\operatorname{Rm}}
\newcommand{ \Rmzero } {\mathring{\Rm}}
\newcommand{\Rc} {\operatorname{Rc}}
\newcommand{\Curv} {S_{B}^{2}\left( \mathfrak{so}(n) \right) }
\newcommand{ \tr } {\operatorname{tr}}
\newcommand{ \id } {\operatorname{id}}
\newcommand{ \Riczero } {\mathring{\Ric}}
\newcommand{ \ad } {\operatorname{ad}}
\newcommand{ \Ad } {\operatorname{Ad}}
\newcommand{ \dist } {\operatorname{dist}}
\newcommand{ \rank } {\operatorname{rank}}
\newcommand{\Vol}{\operatorname{Vol}}
\newcommand{\dVol}{\operatorname{dVol}}
\newcommand{ \zitieren }[1]{ \hspace{-3mm} \cite{#1}}
\newcommand{ \pr }{\operatorname{pr}}
\newcommand{\diag}{\operatorname{diag}}
\newcommand{\Lagr}{\mathcal{L}}
\newcommand{\av}{\operatorname{av}}
\newcommand{ \floor }[1]{ \lfloor #1 \rfloor }
\newcommand{ \ceil }[1]{ \lceil #1 \rceil }
\newcommand{\Sym} {\operatorname{Sym}}
\newcommand{\bcirc}{ \ \bar{\circ} \ }
\newcommand{\conj}[1]{ \overline{ #1 } }
\newcommand{\sign}[1]{\operatorname{sign}(#1)}
\newcommand{\cone}{\operatorname{cone}}
\newcommand{\pbd}{\varphi_{bar}^{\delta}}

\newtheorem{theorem}{Theorem}[section]
\newtheorem{definition}[theorem]{Definition}
\newtheorem{example}[theorem]{Example}
\newtheorem{remark}[theorem]{Remark}
\newtheorem{lemma}[theorem]{Lemma}
\newtheorem{proposition}[theorem]{Proposition}
\newtheorem{corollary}[theorem]{Corollary}
\newtheorem{assumption}[theorem]{Assumption}
\newtheorem{acknowledgment}[theorem]{Acknowledgment}
\newtheorem{DefAndLemma}[theorem]{Definition and lemma}

\newcommand{\R}{\mathbb{R}}
\newcommand{\N}{\mathbb{N}}
\newcommand{\Z}{\mathbb{Z}}
\newcommand{\Q}{\mathbb{Q}}
\newcommand{\C}{\mathbb{C}}
\newcommand{\F}{\mathbb{F}}
\newcommand{\X}{\mathcal{X}}
\newcommand{\D}{\mathcal{D}}
\newcommand{\Cont}{\mathcal{C}}

\renewcommand{\labelenumi}{(\alph{enumi})}
\newtheorem{maintheorem}{Theorem}[]
\renewcommand*{\themaintheorem}{\Alph{maintheorem}}
\newtheorem*{theorem*}{Theorem}
\newtheorem*{corollary*}{Corollary}
\newtheorem*{remark*}{Remark}
\newtheorem*{example*}{Example}
\newtheorem*{question*}{Question}
\newtheorem*{definition*}{Definition}
\newtheorem{conjecture}{Conjecture}
\newtheorem*{conjecture*}{Conjecture}
\renewcommand*{\theconjecture}{\Alph{conjecture}}

\begin{abstract}
We prove that for $n \geq 6$ there exists $\varepsilon (n)>0$ such that every closed orientable Riemannian manifold with $\left( \ceil{\frac{n}{2}}+ \varepsilon\right)$-positive curvature operator is a real homology sphere. For $n=6$ we show that the same result holds for manifolds with $4$-positive curvature operators. 
\end{abstract}

\maketitle

\section*{Introduction}

Vanishing and sphere theorems for closed manifolds have a long history in Riemannian Geometry. In his seminal paper \cite{BochnerVectorFieldsAndRic}, 
Bochner proved that the first Betti number of Riemannian manifolds with positive Ricci curvature vanishes. Building up on Bochner's technique, Berger \cite{BergerTwoFormsVanishCurvOperator} and D. Meyer \cite{DMeyerCurvOpPos} proved that manifolds with positive curvature operators are real homology spheres. With Ricci flow techniques, Hamilton \cite{Hamilton3DimRF,Hamilton4DimRFposCurvOp}, Chen \cite{ChenQuarterPinching} and B\"ohm-Wilking \cite{BW2} improved these results and showed that manifolds with $2$-positive curvature operators are space forms. In the context of the proof of the differentiable sphere theorem, further generalizations were obtained by Brendle-Schoen \cite{BrendleSchoenSphereTheorem} and Brendle \cite{BrendleConvergenceInHigherDimensions}.

For manifolds with positive isotropic curvature, Micallef-Wang \cite{MicallefWangNIC} proved that in the even-dimensional case the second Betti number vanishes, and in the simply connected case the manifold is a homotopy sphere due to Micallef-Moore \cite{MicallefMoorePIC}. Classification results were obtained by Hamilton \cite{HamiltonFourPIC}, Chen-Zhu \cite{ChenZhuRFwithSurgeryFourPIC} and Chen-Tang-Zhu \cite{ChenTangZhuClassFourPIC} in dimension $n=4$, Brendle \cite{BrendleRFwithSurgeryPIC} and Huang \cite{HuangManifoldsPIC}  for $n \geq 12,$ Chen \cite{ChenPIC} for $n=9, 10, 11,$ Cho \cite{ChoPIC} for $n=7,8$ and Chow-Wang \cite{ChowWangMinimalTwoSpheresPIC} for $n=5,6.$

Vanishing and sphere theorems for the curvature operator of the second kind were established by Cao-Gursky-Tran in  
\cite{CaoGurskyTranNishikawaConjecture}, together with Nienhaus and Petersen in \cite{NienhausPetersenWinkNewCurvatureOperatorSecondKind} or X. Li in \cite{LiSphereTheoremsSecondKind}.

In \cite{PetersenWinkNewCurvatureConditionsBochner}, Petersen and the author proved that manifolds with $\ceil{\frac{n}{2}}$-positive curvature operators are real homology spheres. The first main theorem of this paper improves upon this result. 

\begin{maintheorem}
\label{GeneralHomologySphereTheorem}
    For every $n \geq 6$ there exists $\varepsilon(n)>0$ such that every closed orientable Riemannian manifold with $\left( \ceil{\frac{n}{2}} + \varepsilon \right)$-positive curvature operator is a real homology sphere. 
\end{maintheorem}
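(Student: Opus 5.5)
We recall the framework of \cite{PetersenWinkNewCurvatureConditionsBochner}. For a harmonic $p$-form $\omega$, the Bochner formula reads $\Delta \frac{1}{2}|\omega|^2 = |\nabla \omega|^2 + g(\Ric(\omega),\omega)$, where the curvature term can be written as
\[
g(\Ric(\omega),\omega) = \sum_\alpha \lambda_\alpha |\Xi_\alpha \omega|^2 .
\]
Here $\{\Xi_\alpha\}$ is an orthonormal eigenbasis of the curvature operator with eigenvalues $\lambda_1 \le \lambda_2 \le \dots$, and $\Xi_\alpha \omega$ is the induced derivation action of $\mathfrak{so}(n)$ on $\Lambda^p$. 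The key estimate of that paper is the following: for $1 \le p \le n/2$, every $p$-form $\omega$ and every $\Xi$ in $\mathfrak{so}(n)$ satisfy $|\Xi \omega|^2 \le \frac{1}{C} \sum_\alpha |\Xi_\alpha \omega|^2$ with an explicit constant. This forces $\sum \lambda_\alpha |\Xi_\alpha\omega|^2 > 0$ as soon as $\mathfrak{R}$ is $(n-p)$-positive, which at the worst case $p=\floor{n/2}$ gives the $\ceil{\frac{n}{2}}$-positivity threshold. By Poincaré duality only $1\le p\le \floor{n/2}$ matter, and for $p < \floor{n/2}$ (and in odd dimensions also for $p=\floor{n/2}$ when the constant has slack) the threshold $n-p$ already exceeds $\ceil{n/2}+\varepsilon$. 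So the whole problem reduces to the middle degree $p = \floor{n/2}$, where the estimate is sharp, and we have to gain a little there.

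The plan is a Thorpe-type trick, as the title suggests. The curvature term only depends on $\mathfrak{R}$ through the quadratic form $\Xi\mapsto |\Xi\omega|^2$ paired against $\mathfrak{R}$. We may therefore replace $\mathfrak{R}$ by $\mathfrak{R}+\tau$ for any $\tau$ in the kernel of this pairing, without changing the Bochner term. Concretely, in degree $p=n/2$ (for $n$ even), the term $g(\Ric(\omega),\omega)$ vanishes on the Weyl-like component coupled with the Hodge star and on four-forms in the same spirit as Thorpe's trick. We would add a suitable multiple of a $4$-form $\theta \in \Lambda^4 \subset S^2(\Lambda^2)$ (the part killed by the Bianchi identity) and check that the quantity $\sum_\alpha \lambda_\alpha(\mathfrak{R}+\theta) |\Xi_\alpha \omega|^2$ equals the original one. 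Choosing $\theta$ depending on $\omega$ (pointwise, at a point where $|\omega|^2$ attains its maximum, or in an integrated form), we would aim to make $\mathfrak{R}+\theta$ genuinely $\ceil{n/2}$-positive whenever $\mathfrak{R}$ is $(\ceil{n/2}+\varepsilon)$-positive.

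The cleaner route to $\varepsilon$ is by contradiction and compactness. Suppose that for a sequence $\varepsilon_k\to0$ there are algebraic curvature operators $\mathfrak{R}_k$, normalized to trace or norm one and $(\ceil{n/2}+\varepsilon_k)$-nonnegative, together with unit middle-degree forms $\omega_k$ satisfying $g(\Ric_{\mathfrak{R}_k}(\omega_k),\omega_k)\le 0$. Passing to a limit gives an algebraic $\mathfrak{R}$ that is $\ceil{n/2}$-nonnegative and a form $\omega$ with $g(\Ric(\omega),\omega)\le0$. We then need to analyze the equality case of the Petersen--Wink estimate. Equality forces the bottom $\ceil{n/2}$ eigenvalues to be equal (say to $-\mu\le 0$ in sum balance), forces $|\Xi_\alpha\omega|^2$ to be constant along the relevant eigenspaces, and forces $\omega$ to be highly symmetric, for instance a decomposable or Kähler-like middle form.

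The main obstacle is this rigidity step: showing that these equality configurations are incompatible with the first Bianchi identity unless $\mathfrak{R}$ is zero, which contradicts the normalization. I would first test the candidate extremal forms, such as $\omega=e_1\wedge\dots\wedge e_p$ and $\omega=$ (sum of powers of a Kähler form). For each, I would compute the spectrum of $\mathfrak{R}$ forced by equality. I would then use the Thorpe trick, i.e.\ freedom modulo $\Lambda^4$, to exhibit a contradiction with Bianchi via trace identities. Once the strict algebraic inequality holds on this compact set, continuity yields a uniform $\varepsilon(n)>0$. With it, the Bochner argument shows that harmonic $p$-forms vanish for $1\le p\le n-1$, so $M$ is a real homology sphere. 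The restriction $n\ge6$ enters because for small $n$ the middle degree interacts with $\Lambda^4$ degenerately.
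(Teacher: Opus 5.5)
Your reduction to the middle degree $p=\floor{n/2}$ is correct, since $n-p\ge\ceil{n/2}+1$ for $p<\floor{n/2}$. However, odd $n$ has no slack at $p=\floor{n/2}$ either: there $n-p=\ceil{n/2}$, and the bound $\mathcal{A}_\omega\le p|\omega|^2\id$ is attained because $2p\le n$. So odd dimensions need the same improvement as even ones.

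The first real problem is that your Thorpe trick sits on the wrong side of the pairing. The curvature term is $\tr(\mathcal{R}\mathcal{A}_\omega)$, and $\mathcal{A}_\omega$ is not orthogonal to $\widehat{\Lambda^4V^*}$. For example, take $\omega=\sqrt2\operatorname{Re}\varphi^1\wedge\dots\wedge\varphi^p$ with $p\ge3$ and $\eta=\frac p2\Omega^\flat\wedge\Omega^\flat$; then $\tr(\hat\eta\mathcal{A}_\omega)=2p(p-1)\neq0$. So adding a $4$-form to $\mathcal{R}$ does change the Bochner term, and the check you propose fails (besides, $\mathcal{R}+\hat\theta$ is no longer an algebraic curvature operator). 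The correct statement is the reverse. Since $\mathcal{R}$ satisfies Bianchi, $\mathcal{R}\perp\widehat{\Lambda^4V^*}$, so $\tr(\mathcal{R}\mathcal{A}_\omega)=\tr(\mathcal{R}(\mathcal{A}_\omega+\hat\eta))$ for every $\eta$. By Lemma~\ref{EigenvalueLemma}(a) it then suffices to find, for each unit $\omega\in\Lambda^pV^*$, some $\eta$ with $0\le\mathcal{A}_\omega+\hat\eta\le(p-\varepsilon)\id$. This removes $\mathcal{R}$ from the problem entirely.

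Your compactness argument over pairs $(\mathcal{R}_k,\omega_k)$ cannot close, because the limiting equality configurations are not rigid. The limit is only a $\ceil{n/2}$-nonnegative algebraic curvature operator with $\tr(\mathcal{R}\mathcal{A}_\omega)=0$, and such pairs exist with $\mathcal{R}\neq0$. For $n=6$, take the curvature operator of $S^3\times\R^3$ and $\omega=e^{456}$ tangent to the flat factor; under your stated hypotheses this is even a constant sequence. So the step you call the main obstacle is false, not merely hard.

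Your candidate extremals are also wrong. Decomposable forms satisfy $\mathcal{A}_\omega\le|\omega|^2\id$. Equality in $|L\omega|^2\le p|L|^2|\omega|^2$ holds exactly on the $O(V)$-orbit $\mathcal{O}$ of $\sqrt2\operatorname{Re}\varphi^1\wedge\dots\wedge\varphi^p$, the real part of a complex volume form on a $2p$-plane $E$, with $L$ proportional to the K\"ahler bivector $\Omega$.

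The missing argument runs as follows.
\begin{itemize}
\item On $\mathcal{O}$, $\mathcal{A}_\omega$ has eigenvalues $p,0,1,\frac12,0$ on $\R\Omega$, $\Lambda^{1,1}_0E$, $(\Lambda^{2,0}E\oplus\Lambda^{0,2}E)_{\R}$, $E\wedge F$, $\Lambda^2F$ respectively.
\item The correction $\eta=-\frac p4\Omega^\flat\wedge\Omega^\flat$ lowers the top eigenvalue to $\frac{p+1}{2}$ while keeping nonnegativity. This needs $p\ge3$: for $p=2$ any $\eta$ with $\mathcal{A}_\omega+\hat\eta\ge0$ must vanish. That is the actual reason for $n\ge6$.
\item Near $\mathcal{O}$, write $\omega=\omega_{\max}+\mu$ with $\omega_{\max}$ a nearest point. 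Then $\mu\perp\mathfrak{so}(V)\omega_{\max}$, so $\mathcal{B}_{\omega_{\max},\mu}$ vanishes on $\Lambda^2F$, which is exactly where the corrected operator has a kernel.
\item Away from $\mathcal{O}$, compactness of the unit sphere gives $\mathcal{A}_\omega\le(p-\varepsilon_2)\id$ with $\eta=0$.
\end{itemize}
The compactness therefore runs over $\omega$ alone, never over pairs $(\mathcal{R},\omega)$.
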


The curvature operator $\mathcal{R} \colon \Lambda^2TM \to \Lambda^2TM$ is called $k$-nonnegative if its eigenvalues $\lambda_1 \leq \ldots \leq \lambda_{\binom{n}{2}}$ satisfy 
\begin{align*}
    \lambda_1 + \ldots + \lambda_{\floor{k}} + (k-\floor{k}) \lambda_{\floor{k}+1} \geq 0
\end{align*}
and $k$-positive if the inequality is strict. 

Theorem \ref{GeneralHomologySphereTheorem} is a consequence of a more general vanishing result for Betti numbers, generalizing \cite[Theorem A]{PetersenWinkNewCurvatureConditionsBochner}. Specifically we have

\begin{maintheorem}
\label{EpsilonImprovementManifolds}
    For $3 \leq p \leq \frac{n}{2}$ there exists $\varepsilon(p,n)>0$ with the following property. If $(M,g)$ is a closed Riemannian manifold with $(n-p+\varepsilon)$-positive curvature operator, then the Betti numbers $b_p(M)$ and $b_{n-p}(M)$ vanish.
\end{maintheorem}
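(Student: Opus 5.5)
We follow the Bochner technique as set up in \cite{PetersenWinkNewCurvatureConditionsBochner}. By Hodge theory and Poincar\'e duality (after passing to the orientable double cover if needed, which only increases Betti numbers), it suffices to show that every harmonic $p$-form $\omega$ vanishes. The Weitzenb\"ock formula reads $\Delta \omega = \nabla^*\nabla \omega + \Ric(\omega)$, so it is enough to show that the curvature term $g(\Ric(\omega),\omega) = \sum_\alpha \lambda_\alpha |S_{\alpha}\omega|^2$ is positive at every point where $\omega \neq 0$. Here $\{\Xi_\alpha\}$ is an orthonormal eigenbasis of $\mathcal{R}$ and $S_\alpha\omega$ denotes the induced action of $\Xi_\alpha \in \mathfrak{so}(n)$ on $\Lambda^p$. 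Integrating then yields $\nabla\omega = 0$ and $\omega=0$.

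The key algebraic input from \cite{PetersenWinkNewCurvatureConditionsBochner} is the estimate $|S_\alpha \omega|^2 \le \frac{1}{n-p}\sum_\beta |S_\beta\omega|^2$ for every $\alpha$ (for $p \le n/2$). Combined with the Abel summation lemma, this gives nonnegativity of the curvature term under $(n-p)$-nonnegativity. To gain $\varepsilon$, I would introduce a Thorpe-type modification, as the title suggests. The Weitzenb\"ock curvature term only depends on $\mathcal{R}$ modulo a suitable modification: adding a multiple of $\star$-type terms (or, more generally, an element in the kernel of the map $\mathcal{R}\mapsto \Ric|_{\Lambda^p}$, such as the quartic forms that vanish under the first Bianchi identity in the Thorpe sense) does not change $g(\Ric(\omega),\omega)$. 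So the plan is:
\begin{enumerate}
\item Find a family of curvature-type operators $\mathcal{T}$ with $\sum_\alpha \langle \mathcal{T}\Xi_\alpha,\Xi_\alpha\rangle$-weights contributing zero to the Weitzenb\"ock term for every $p$-form.
\item Show that for $\mathcal{R}$ which is $(n-p+\varepsilon)$-positive, some $\mathcal{R}+\mathcal{T}$ is $(n-p)$-positive, or at least satisfies the weighted inequality needed.
\end{enumerate}

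Alternatively, I would use a compactness/contradiction argument. Suppose no $\varepsilon$ works. Then there are algebraic curvature operators $\mathcal{R}_k$, normalized to $|\mathcal{R}_k|=1$, that are $(n-p+1/k)$-positive, together with unit $p$-forms $\omega_k$ satisfying $g(\Ric_{\mathcal{R}_k}(\omega_k),\omega_k)\le 0$. Pass to a limit $(\mathcal{R},\omega)$. This limit is $(n-p)$-nonnegative with vanishing curvature term, so equality holds throughout the Abel summation argument. Equality forces strong rigidity: the $(n-p)$ lowest eigenvalues together with the top part must be balanced, and $|S_\alpha\omega|^2=\frac{1}{n-p}|S\omega|^2$ for the relevant eigenvectors. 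One then analyses the equality case. The condition $p\geq 3$ should exclude the configurations in which $\omega$ is, for instance, a decomposable form or a form built from a K\"ahler-type structure, where equality could genuinely occur. The goal is to show that equality forces $\mathcal{R}$ to be such that its $(n-p+\delta)$-positivity fails for all small $\delta$, contradicting the fact that the $\mathcal{R}_k$ are $(n-p+1/k)$-positive, since the limit retains $(n-p)$-nonnegativity with a strict margin structure.

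The main obstacle is this equality analysis. One must classify the pairs $(\mathcal{R},\omega)$ with $\mathcal{R}$ $(n-p)$-nonnegative and vanishing Weitzenb\"ock term, and then show that such $\mathcal{R}$ cannot be approximated by $(n-p+\delta)$-positive operators with nonpositive curvature terms. My first attempt would be to show that equality forces $\lambda_1=\dots=\lambda_{n-p+1}$, up to the relevant weight. If so, the $\delta$-weighted condition $\lambda_1+\dots+\lambda_{n-p}+\delta\lambda_{n-p+1}>0$ in the limit gives strict positivity of the curvature term, which is the desired contradiction. Here the Thorpe trick is used to shift $\mathcal{R}$ so that enough eigenvalues coincide.
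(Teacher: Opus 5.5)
Your Bochner set-up is fine: the reduction to positivity of $g(\Ric_L\omega,\omega)$, the double cover for $b_{n-p}$, and the bound $\langle\mathcal{A}_\omega\Xi,\Xi\rangle\le\frac{\tr\mathcal{A}_\omega}{n-p}=p|\omega|^2$ behind $(n-p)$-positivity. The gap is that the step meant to produce the $\varepsilon$ is missing, and you put the Thorpe trick on the wrong side.

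Write $g(\Ric_L\omega,\omega)=\tr(\mathcal{R}\mathcal{A}_\omega)$. The $4$-form operators $\hat\eta$ you propose to add to $\mathcal{R}$ are orthogonal to algebraic curvature operators, but not to the test operators $\mathcal{A}_\omega$. For example, take $\omega=\sqrt2\operatorname{Re}\varphi^1\wedge\dots\wedge\varphi^p$ and $\eta=\frac p2\Omega^\flat\wedge\Omega^\flat$. Proposition~\ref{ActionOnMaximizer} and Lemma~\ref{Correction4Form} give $\tr(\hat\eta\mathcal{A}_\omega)=2p(p-1)|\omega|^2\neq0$. So these terms change the curvature term and cannot serve as your step (1), and step (2) merely restates the theorem.

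The usable freedom is the dual one. Since $\mathcal{R}\perp\widehat{\Lambda^4V^*}$, you may replace $\mathcal{A}_\omega$ by $\mathcal{A}_\omega+\hat\eta$, with $\eta$ depending on $\omega$. By Lemma~\ref{EigenvalueLemma} it then suffices to find, for every unit $\omega$, some $\eta$ with $0\le\mathcal{A}_\omega+\hat\eta\le(p-\delta)\id$; this gives $k=\frac{p(n-p)}{p-\delta}=n-p+\varepsilon$. Constructing such $\eta$ uniformly in $\omega$ is the real content, and your plan does not do it.

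That construction needs the equality cases of $\mathcal{A}_\omega\le p|\omega|^2\id$. By Proposition~\ref{CharacterizationMaximizer} these are exactly the Kähler-type forms $\sqrt2|\omega|\operatorname{Re}\varphi^1\wedge\dots\wedge\varphi^p$ on a $2p$-plane $E$. So $p\ge3$ does \emph{not} exclude them, and decomposable forms (top eigenvalue $|\omega|^2$) are nowhere near equality. What $p\ge3$ actually gives is $\mathcal{A}_\omega=|\omega|^2\id$ on $(\Lambda^{2,0}E\oplus\Lambda^{0,2}E)_\R$. This leaves room to subtract $\frac p4\Omega^\flat\wedge\Omega^\flat$: the eigenvalue $p$ on $\Omega$ drops to $\frac{p+1}{2}$, and all others become $\frac12$ or $0$.

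You then still need the perturbation step. The corrected operator has eigenvalue $0$ on $\Lambda^2F$, and nonnegativity there survives only because of the following. If $\omega_{\max}$ is a nearest point of the maximizer orbit and $\mu=\omega-\omega_{\max}\perp\mathfrak{so}(V)\omega_{\max}$, the cross term $\mathcal{B}_{\omega_{\max},\mu}$ vanishes on $\Lambda^2F$. Away from the orbit, compactness gives $\mathcal{A}_\omega\le(p-\varepsilon_2)|\omega|^2\id$ with $\eta=0$.

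Your compactness alternative does not close this gap. The limit $(\mathcal{R},\omega)$ is only an $(n-p)$-nonnegative algebraic curvature operator with $\tr(\mathcal{R}\mathcal{A}_\omega)=0$. Positivity is an open condition, so no ``strict margin'' survives the limit, and such pairs exist. An example is the curvature operator of $S^{n-p}\times\R^p$ with $\omega$ the volume form of the flat factor. There $\lambda_1=\dots=\lambda_{n-p+1}=0$, so even your hoped-for rigidity yields $\lambda_1+\dots+\lambda_{n-p}+\delta\lambda_{n-p+1}=0$, not $>0$. No contradiction can come from the limit alone; you would need a uniform local estimate near every limiting configuration, which is exactly the construction above.

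Moreover, equality in the summation argument does not force $\lambda_1=\dots=\lambda_{n-p+1}$. It also allows $\lambda_1<\lambda_2=\dots=\lambda_{n-p+1}$ with $\omega$ a maximizer and $\Xi_1=\pm\Omega$. This case occurs for general self-adjoint $\mathcal{R}$, and ruling it out for algebraic curvature operators requires the Bianchi identity, which is exactly where the $4$-form correction of $\mathcal{A}_\omega$ enters.

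Finally, any argument that does not use a concrete $p\ge3$ computation must fail. For $p=2$ the algebraic statement is false, by Example~\ref{ExampleSharpCandidate} together with Theorem~\ref{SharpnessCriterion}.
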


It is possible to estimate $\varepsilon(p,n)>0$ explicitly, see Remark \ref{EffectiveEstimateRemark}. In particular, one may choose $\varepsilon>0$ independent of $n.$ For $p=3$ and $n=6,$ we can obtain the optimal estimate that our techniques allow, compare Example \ref{ExampleSharpCandidate}, and prove

\begin{maintheorem}
\label{SixDimHomologySpheres}
     Let $(M,g)$ be a $6$-dimensional closed orientable Riemannian manifold. If the curvature operator of $(M,g)$ is $4$-positive, then $M$ is a real homology sphere.
\end{maintheorem}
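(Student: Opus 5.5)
We need $b_1=b_2=b_3=b_4=b_5=0$ for a closed orientable 6-manifold whose curvature operator is $4$-positive. By Poincaré duality it suffices to show $b_1=b_2=b_3=0$.

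\emph{Step 1: $b_1$ and $b_2$.} A $4$-positive curvature operator in dimension $6$ is in particular $(n-p)$-positive for $p=2$, since $n-p=4$. The vanishing result of \cite[Theorem A]{PetersenWinkNewCurvatureConditionsBochner} then gives $b_2=b_4=0$. For $b_1$ and $b_5$ we note that $4$-positivity implies positive Ricci curvature: $\Ric(e_1,e_1)$ is a sum of $n-1=5$ curvature terms, and $(n-1)$-positivity suffices. Bochner's theorem then yields $b_1=b_5=0$.

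\emph{Step 2: $b_3$, the real content.} Here $p=3=n/2$, and Theorem \ref{EpsilonImprovementManifolds} only gives $(3+\varepsilon)$-positivity. We must push the threshold all the way to $k=4$. The plan is the Bochner technique with the Thorpe trick suggested by the title. For a harmonic $3$-form $\omega$ we have the Weitzenböck identity $\Delta = \nabla^*\nabla + \mathrm{Ric}(\omega)$, where
\[
g(\mathrm{Ric}(\omega),\omega)=\sum_\alpha \lambda_\alpha |\Xi_\alpha \omega|^2
\]
in an orthonormal eigenbasis $\{\Xi_\alpha\}$ of $\mathcal{R}$, with $\Xi_\alpha$ acting on forms as derivations.

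Thorpe's trick is that in dimension $6$ (more generally $n=2p$) one can add to $\mathcal{R}$ a multiple of the "Pfaffian-type" algebraic curvature tensor $\ast$ that acts trivially on the quadratic form $\omega\mapsto g(\mathrm{Ric}(\omega),\omega)$ on $3$-forms. Concretely, I would look for a modification $\mathcal{R}+\tau\, S$ that does not change the curvature term on $\Lambda^3$, for example using the Hodge star/Bianchi-type identity that exchanges $\omega$ and $\ast\omega$. I would then choose $\tau$ pointwise so that the modified operator is "better conditioned".

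\emph{Step 3: the key inequality.} Following the approach of \cite{PetersenWinkNewCurvatureConditionsBochner}, the curvature term is nonnegative provided
\[
|\Xi_\alpha\omega|^2 \le \frac{1}{k}\sum_\beta |\Xi_\beta\omega|^2 = \frac{p(n-p)}{k}\,|\omega|^2 \quad\text{for all } \alpha,
\]
together with $k$-positivity. The naive estimate gives weights bounded so that only $k=n-p=3$ works. The improvement must come from decomposing $\omega=\omega_++\omega_-$ via the Hodge star. In dimension $6$, $\ast$ on $\Lambda^3$ squares to $-1$, so one works in the complexification with self-dual and anti-self-dual parts. The weights $|\Xi_\alpha\omega|^2$ should then be bounded by $\tfrac{1}{4}$ of the total for a suitably modified $\omega$ or curvature operator.

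\emph{Main obstacle.} The hardest step is exhibiting the exact algebraic improvement. The task is to show that, after the Thorpe-type modification, the maximum of $|\Xi\omega|^2$ over unit $\Xi\in\mathfrak{so}(6)$ is at most $\tfrac{1}{4}\sum_\beta|\Xi_\beta\omega|^2$. This amounts to an optimization over $\Lambda^3\R^6$ modulo $SO(6)$, and I would attempt it via the known normal forms of $3$-forms in dimension $6$.

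The sharpness example referenced as Example \ref{ExampleSharpCandidate} suggests the extremal $\omega$. My first attempt would be to verify the inequality on the orbit representatives. Equality cases then force $\nabla\omega=0$ with vanishing curvature term, which contradicts strict $4$-positivity, so $b_3=0$.
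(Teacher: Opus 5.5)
\textbf{There is a genuine gap: the argument for $b_3=0$ is missing, and the mechanism you sketch for it acts on the wrong factor.} Your Step 1 and the closing maximum-principle argument are fine and agree with the paper: $b_1=0$ by Bochner, $b_2=0$ by \cite[Theorem A]{PetersenWinkNewCurvatureConditionsBochner}, then Poincar\'e duality. The problem is $b_3$.

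\textbf{Why modifying $\mathcal{R}$ does not work.} The hypothesis is $4$-positivity of the actual curvature operator $\mathcal{R}$. A perturbation $\mathcal{R}+\tau S$ that leaves $\omega\mapsto g(\Ric_L(\omega),\omega)=\tr(\mathcal{R}\mathcal{A}_{\omega})$ unchanged would need $\tr(S\mathcal{A}_{\omega})=0$. In dimension $6$ there is no Hodge star acting on $\Lambda^2V$ to play this role. The natural candidates, $4$-form operators, are not orthogonal to $\mathcal{A}_{\omega}$: for the maximizer of Proposition \ref{ActionOnMaximizer} and $\eta=\Omega^{\flat}\wedge\Omega^{\flat}$, Lemma \ref{Correction4Form} gives $\tr(\mathcal{A}_{\omega}\hat{\eta})\neq 0$.

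\textbf{The invariance runs the other way.} Algebraic curvature operators satisfy the first Bianchi identity, so $\Sym^2_B(\Lambda^2V)\perp\widehat{\Lambda^4V^*}$. Hence $\tr(\mathcal{R}\mathcal{A}_{\omega})=\tr(\mathcal{R}(\mathcal{A}_{\omega}+\hat{\eta}))$ for every $\eta\in\Lambda^4V^*$, and it is $\mathcal{A}_{\omega}$, not $\mathcal{R}$, that must be corrected. The Hodge-star splitting of $\omega$ does not help on its own either. We have $\mathcal{A}_{R_t\omega}=\mathcal{A}_{\omega}$, and for $\omega=\sqrt{2}\operatorname{Re}\varphi^1\wedge\varphi^2\wedge\varphi^3$ one has $\lambda_{\max}(\mathcal{A}_{\omega})=3|\omega|^2$. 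So without a $4$-form correction, $k=3$ is the limit.

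\textbf{What the criterion actually requires.} By Lemma \ref{EigenvalueLemma} and Theorem \ref{SharpnessCriterion}, you need, for every $\omega\in\Lambda^3V^*$, some $\eta$ with $0\le\mathcal{A}_{\omega}+\hat{\eta}\le\frac{9}{k}|\omega|^2\id_{\Lambda^2V}$. Your formulation (``the maximum of $|\Xi\omega|^2$ is at most a quarter of the total'') drops the lower bound, and that is the binding constraint. Once $\hat{\eta}$ is added, diagonal entries can become negative and the eigenvalue lemma no longer applies. Nonnegativity is exactly what forces the maximal eigenvalue to be at least $2|\omega|^2$ in Example \ref{ExampleSharpCandidate}.

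\textbf{The missing construction.} Checking the example, or ``orbit representatives'', needs a complete list of representatives and an explicit $\eta$ for each; this is the actual content you leave open. The paper supplies it in three steps.
\begin{enumerate}
\item Theorem \ref{NormalForm}, via Hitchin's splitting of $\omega-\sqrt{-1}*\omega$ into decomposable forms, moves every $\omega$, up to $SO(V)$ and $R_t$, to $\omega_0=ae^{123}+be^{156}-ce^{246}+de^{345}$ satisfying \eqref{CoefficientInequalities}.
\item $\mathcal{A}_{\omega_0}$ then preserves $\operatorname{span}\{e_1\wedge e_4,e_2\wedge e_5,e_3\wedge e_6\}$ and the blocks $E_i\wedge E_j$, where $E_i=\operatorname{span}\{e_i,e_{i+3}\}$.
\item Take $\eta=\sum_{i<j}h_{ij}\,e^i\wedge e^{i+3}\wedge e^j\wedge e^{j+3}$, with each $h_{ij}$ as close as nonnegativity on $E_i\wedge E_j$ allows to the value that makes the $3\times 3$ block a multiple of the identity. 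The row-sum bounds \eqref{RowInequality} then give $0\le\mathcal{A}_{\omega_0}+\hat{\eta}\le 2|\omega_0|^2\id$ (Theorem \ref{ThorpeTrick36}).
\end{enumerate}
This yields $k=\frac{9}{2}$, which covers your target $k=4$. Without an argument of this kind, $b_3=0$ is not established.
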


Theorems \ref{GeneralHomologySphereTheorem} -- \ref{SixDimHomologySpheres} rely on the Bochner technique. In particular, our methods also imply vanishing and estimation results. The rigidity case for manifolds with nonnegative curvature operators is due to Gallot-Meyer \cite{GallotMeyerCurvOperatorAndForms} and the estimation theorem for manifolds with curvature operator bounded below is due to Gallot \cite{GallotSobolevEstimates}, building up on work of P. Li \cite{LiSobolevConstant}. Theorems \ref{GeneralRigidityTheorem} and \ref{TheoremThirdBettiNumber} below generalize \cite[Theorems A -- C]{PetersenWinkNewCurvatureConditionsBochner}.

\begin{maintheorem}
    \label{GeneralRigidityTheorem}
    For $3 \leq p \leq \frac{n}{2}$ there exists $\varepsilon(p,n)>0$ with the following property. Let $(M,g)$ be a closed $n$-dimensional Riemannian manifold and let $\lambda_1 \leq  \ldots \leq \lambda_{\binom{n}{2}}$ denote the eigenvalues of the curvature operator $\mathcal{R}$.
   \begin{enumerate}
       \item If $\mathcal{R}$ is $(n-p+\varepsilon)$-positive, then $b_{p}(M)=b_{n-p}(M)=0.$
       \item If $\mathcal{R}$ is $(n-p+\varepsilon)$-nonnegative, then all harmonic $p$-forms are parallel.
       \item For $\kappa \leq 0$ and $D>0$ there is $C(\kappa D^2)>0$ such that if $\diam(M) < D$ and $\lambda_1 + \ldots + \lambda_{n-p} + \varepsilon \lambda_{n-p+1} \geq (n-p+\varepsilon) \kappa,$  then
       \begin{align*}
           b_p(M) \leq \binom{n}{p} \exp\left( C \sqrt{-\kappa D^2 p(n-p)} \right).
       \end{align*}
   \end{enumerate}
\end{maintheorem}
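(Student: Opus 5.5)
The plan is to run the Bochner technique for harmonic $p$-forms, in the setting of \cite{PetersenWinkNewCurvatureConditionsBochner}, and to improve the key algebraic estimate by a Thorpe-type trick. For a harmonic $p$-form $\omega$ the Weitzenb\"ock formula reads $\Delta \omega = \nabla^*\nabla \omega + \Ric(\omega)$. Here the curvature term can be written as $g(\Ric(\omega),\omega) = \sum_\alpha \lambda_\alpha |\Xi_\alpha \omega|^2$, where $\{\Xi_\alpha\}$ is an orthonormal eigenbasis of $\mathcal{R}$ acting on $\Lambda^p$ as derivations.

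The core step is to show that this sum is nonnegative as soon as $\mathcal{R}$ is $(n-p+\varepsilon)$-nonnegative. Petersen--Wink prove the weight bound
\begin{align*}
|\Xi_\alpha\omega|^2 \leq \frac{1}{n-p}\sum_\beta |\Xi_\beta \omega|^2
\end{align*}
for $p\le n/2$, which gives the case $\varepsilon=0$. By the elementary lemma that $k$-nonnegativity makes $\sum \lambda_\alpha w_\alpha \ge 0$ whenever $0\le w_\alpha \le \frac{1}{k}\sum w_\beta$, it suffices to improve this bound to $\frac{1}{n-p+\varepsilon}$.

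This is where I would use the Thorpe trick. The form $\omega$ induces a curvature term that is unchanged when $\mathcal{R}$ is replaced by $\mathcal{R} + \tau\, \Lambda$, where $\Lambda$ is an element in the kernel of the map $\mathcal{R}\mapsto \Ric(\cdot)$ restricted to $\omega$. For $p\ge 3$ the natural candidates come from $4$-forms acting on $\Lambda^2$, or from the Bianchi-type freedom. Concretely:
\begin{enumerate}
\item Show that $\sum_\alpha \lambda_\alpha|\Xi_\alpha\omega|^2$ depends on $\mathcal{R}$ only through a projection, so that adding a suitable symmetric operator $T$ built from a $4$-form $\eta$ does not change the Bochner term.
\item Choose $\eta$ depending on $\omega$ so that the modified weights $w_\alpha$ (the diagonal entries of the quadratic form $\omega\mapsto \langle(\mathcal R+T)\omega,\omega\rangle$ in the eigenbasis) become strictly flatter than the extremal configuration.
\end{enumerate}
The extremal configurations, where the Petersen--Wink bound is attained, need to be classified. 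They should be decomposable-like forms, for which the equality case forces a rigid structure. The hypothesis $p\ge3$ should be exactly what allows a nonzero $4$-form correction to exist.

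I would then argue by compactness. Normalize $|\omega|$ and consider the unit sphere in $\Lambda^p$ together with the compact space of normalized $k$-nonnegative operators. If no $\varepsilon$ worked, there would be a sequence with $\varepsilon_i\to0$ and negative Bochner term; a limit would then be an equality configuration at $\varepsilon=0$. Ruling this out via the Thorpe modification, using that the set of $k$-nonnegative operators is closed and convex, gives $\varepsilon(p,n)>0$. This non-effective compactness argument is enough for the existence statement.

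With the algebraic inequality $g(\Ric(\omega),\omega)\ge0$ (strict under $(n-p+\varepsilon)$-positivity) in hand, the three parts follow by standard arguments.
\begin{enumerate}
\item Positivity forces harmonic $p$-forms to vanish, and $b_{n-p}=b_p$ by Poincar\'e duality (passing to the orientation cover if needed).
\item Nonnegativity gives $\nabla\omega=0$ by integrating the Bochner formula.
\item Under the lower bound with $\kappa$, the estimate $g(\Ric(\omega),\omega)\ge p(n-p)\kappa|\omega|^2$ holds. The estimation follows by the Gallot--Li Moser iteration / Sobolev argument exactly as in \cite{PetersenWinkNewCurvatureConditionsBochner}, with the dimension bound $\binom{n}{p}$ on parallel-form-like sections.
\end{enumerate}

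The main obstacle is the algebraic step: identifying the correct Thorpe-type modification $T$ and showing it strictly lowers the maximal weight uniformly near the extremal forms.
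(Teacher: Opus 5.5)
Your outer framework (Poor's formula, the eigenvalue lemma, a $4$-form correction, then Bochner and Gallot--Li for (a)--(c)) matches the paper. However, the real content of the theorem is the algebraic step you leave open, and what you say about it is partly off.

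\textbf{The correction goes on the weight operator, not on $\mathcal{R}$.} With $g(\mathcal{A}_\omega\alpha,\beta)=g(\alpha\omega,\beta\omega)$ one has $g(\Ric_L(\omega),\omega)=\tr(\mathcal{R}\mathcal{A}_\omega)=\tr(\mathcal{R}(\mathcal{A}_\omega+\hat\eta))$ for every $\eta\in\Lambda^4V^*$, because algebraic curvature operators are orthogonal to $\widehat{\Lambda^4V^*}$. Replacing $\mathcal{R}$ by $\mathcal{R}+\tau\Lambda$ instead changes its spectrum, so it is useless under a $k$-nonnegativity hypothesis. Since the eigenbasis of $\mathcal{R}$ is arbitrary, what you need is the operator inequality $0\le\mathcal{A}_\omega+\hat\eta\le(p-\delta)|\omega|^2\id_{\Lambda^2V}$ for all $\omega$, with a uniform $\delta>0$; then take $k=p(n-p)/(p-\delta)>n-p$. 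By $(S_k\cap\Sym^2_B(\Lambda^2V))^*=S_k^*+\widehat{\Lambda^4V^*}$ this inequality is equivalent to the curvature statement, so it cannot be bypassed.

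\textbf{The extremal forms are not ``decomposable-like''.} A real decomposable $\omega$ has $\lambda_{\max}(\mathcal{A}_\omega)=|\omega|^2$, the opposite extreme. Equality in $|L\omega|^2\le p|L|^2|\omega|^2$ holds exactly on the $O(V)$-orbit $\mathcal{O}$ of $\sqrt2\operatorname{Re}\varphi^1\wedge\cdots\wedge\varphi^p$, with $\varphi^j=\frac1{\sqrt2}(e^{2j-1}+\sqrt{-1}e^{2j})$; this is the real part of a complex volume form on a $2p$-dimensional subspace $E$. For such $\omega_{\max}$, $\mathcal{A}_{\omega_{\max}}$ has eigenvalues $p,0,1,\frac12,0$ on $\R\Omega$, $\Lambda^{1,1}_0E$, $(\Lambda^{2,0}E\oplus\Lambda^{0,2}E)_\R$, $E\wedge F$, $\Lambda^2F$ respectively; the value $1$ uses $p\ge3$. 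The $4$-form $-\frac p4\Omega^\flat\wedge\Omega^\flat$ built from the K\"ahler form of $E$ changes these to $\frac{p+1}2$ on $\R\Omega$, $\frac12$ on the next three summands, and $0$ on $\Lambda^2F$.

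\textbf{Your compactness argument does not close the gap.} A limit of counterexamples $(\omega_i,\mathcal{R}_i)$ is a unit $\omega$ and a nonzero $(n-p)$-nonnegative algebraic curvature operator $\mathcal{R}$ with $\tr(\mathcal{R}\mathcal{A}_\omega)=0$. This is no contradiction: take the curvature operator of $S^{n-p}\times\R^p$ and the volume form of the $\R^p$ factor. If the limit $\omega$ is off $\mathcal{O}$, you can indeed conclude with $\eta=0$, since $\lambda_{\max}(\mathcal{A}_\omega)<p|\omega|^2$ is an open condition. But if $\omega\in\mathcal{O}$, the corrected operator has kernel $\Lambda^2F$. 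For nearby $\omega_i$, the block of $\mathcal{A}_{\omega_i}$ on $\Lambda^2F$ is only of size $|\omega_i-\omega|^2$, while its off-diagonal blocks are of size $|\omega_i-\omega|$. So nothing forces $\mathcal{A}_{\omega_i}+\hat\eta\ge0$, and closedness and convexity of the cones do not help.

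\textbf{The missing idea is to expand around the \emph{nearest} point of the orbit.} Write $\omega=\omega_{\max}+\mu$ with $\omega_{\max}\in\mathcal{O}$ closest to $\omega$, so that $\mu\perp\mathfrak{so}(V)\omega_{\max}$. Then $\mathcal{A}_\omega=\mathcal{A}_{\omega_{\max}}+\mathcal{B}_{\omega_{\max},\mu}+\mathcal{A}_\mu$ with $g(\mathcal{B}_{\omega_{\max},\mu}\alpha,\beta)=2g(\alpha\omega_{\max},\beta\mu)$. This vanishes for $\alpha\in\Lambda^2F$ since $\iota_v\omega_{\max}=0$ for $v\in F$. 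Use the correction adapted to $\omega_{\max}$ and write $v=v_0+v_{\Lambda^2F}$. One gets $(\frac12-2p|\mu|)|v_0|^2\le g((\mathcal{A}_\omega+\hat\eta)v,v)<(p-\varepsilon_1)|v|^2$ for $|\mu|$ small, uniformly along $\mathcal{O}$. Compactness with $\eta=0$ then handles the rest of the unit sphere.

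Without this, or an equivalent explicit construction, your plan does not produce $\varepsilon(p,n)$. Parts (a)--(c) are fine once it is available.
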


For $3$-forms in dimension $n=6$ it is possible to use a normal form to refine the techniques used in the proof of Theorem \ref{GeneralRigidityTheorem}. In particular, Theorem \ref{SixDimHomologySpheres} is a consequence of the following more general result and \cite{BochnerVectorFieldsAndRic,PetersenWinkNewCurvatureConditionsBochner}.

\begin{maintheorem}
\label{TheoremThirdBettiNumber}
    Let $(M,g)$ be a $6$-dimensional closed Riemannian manifold and let $\lambda_1 \leq  \ldots \leq \lambda_{15}$ denote the eigenvalues of the curvature operator $\mathcal{R}$.
   \begin{enumerate}
       \item If $\mathcal{R}$ is $\frac{9}{2}$-positive, then $b_{3}(M)=0.$
       \item If $\mathcal{R}$ is $\frac{9}{2}$-nonnegative, then all harmonic $3$-forms are parallel.
       \item For $\kappa \leq 0$ and $D>0$ there is $C(\kappa D^2)>0$ such that if $\diam(M) < D$ and $\lambda_1 + \ldots + \lambda_4 + \frac{1}{2} \lambda_5 \geq \frac{9}{2} \kappa,$  then $b_3(M) \leq 20 \exp\left( C \sqrt{-\kappa} D \right).$
   \end{enumerate}
\end{maintheorem}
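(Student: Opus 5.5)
We prove Theorem \ref{TheoremThirdBettiNumber} with the Bochner technique in the framework of \cite{PetersenWinkNewCurvatureConditionsBochner}. For a harmonic $3$-form $\omega$ the Weitzenb\"ock formula reads $\Delta \omega = \nabla^*\nabla \omega + \mathrm{Ric}(\omega)$. The curvature term can be written as
\begin{align*}
g(\mathrm{Ric}(\omega),\omega) = \sum_\alpha \lambda_\alpha |S_\alpha \omega|^2,
\end{align*}
where $\{S_\alpha\}$ is an orthonormal eigenbasis of $\mathcal{R}$ in $\Lambda^2 \cong \mathfrak{so}(6)$, acting on $\Lambda^3$ by derivations, and $\lambda_\alpha$ are the corresponding eigenvalues. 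Parts (a)--(c) then follow by the standard arguments once we prove the algebraic inequality
\begin{align*}
\sum_\alpha \lambda_\alpha |S_\alpha\omega|^2 \geq 0 \quad \text{whenever } \mathcal{R} \text{ is } \tfrac{9}{2}\text{-nonnegative},
\end{align*}
with strict inequality for $\omega \neq 0$ under $\tfrac{9}{2}$-positivity. For (a) and (b), integrate $0 = \int |\nabla\omega|^2 + g(\mathrm{Ric}(\omega),\omega)$. For (c), the Gallot/P.~Li Sobolev-type estimate applies exactly as in \cite{PetersenWinkNewCurvatureConditionsBochner}. The constant $20=\binom{6}{3}$ appears there as the rank of $\Lambda^3$, and the lower bound $\tfrac{9}{2}\kappa$ turns into a lower bound for $\mathrm{Ric}$ on $3$-forms.

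The key lemma is a weighted estimate. Suppose weights $0 \leq |S_\alpha\omega|^2 \leq \mu$ sum to $\sum_\alpha |S_\alpha\omega|^2 = \tfrac{9}{2}\mu$. Then $\tfrac{9}{2}$-nonnegativity gives $\sum_\alpha \lambda_\alpha |S_\alpha\omega|^2 \geq 0$. This follows by Abel summation, since a sum with weights in $[0,1]$ of total mass $k$ is bounded below by $\lambda_1+\dots+\lambda_{\floor{k}} + (k-\floor{k})\lambda_{\floor{k}+1}$. The total is a fixed multiple of $|\omega|^2$, namely $\sum_\alpha |S_\alpha \omega|^2 = p(n-p)|\omega|^2 = 9|\omega|^2$. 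So what we need is the \emph{uniform} bound
\begin{align*}
|S\omega|^2 \leq 2|\omega|^2 \quad \text{for all unit } S \in \mathfrak{so}(6) \text{ and } \omega \in \Lambda^3\R^6,
\end{align*}
giving ratio $9/2$. A naive bound does not reach this, which is why Theorem \ref{GeneralRigidityTheorem} only yields $n-p+\varepsilon$.

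The obstacle is that this uniform bound fails. For $\omega = e_{123}$ and $S = \tfrac{1}{\sqrt 3}(e_{14}+e_{25}+e_{36})$ we get $|S\omega|^2 = 3|\omega|^2$. Here the "Thorpe trick" enters. Following Thorpe's use of $\mathcal{R} + c\,\star$, we modify $\mathcal{R}$ by an algebraic curvature tensor $T$ that does not change the Bochner term on $3$-forms but changes the spectrum. The natural candidate is a tensor $T$ built from $\omega$ itself, for example $T$ with $\sum_\alpha \langle T S_\alpha,S_\alpha\rangle$-type contributions vanishing against $\omega$. Then
\begin{align*}
g(\mathrm{Ric}_{\mathcal{R}}(\omega),\omega) = g(\mathrm{Ric}_{\mathcal{R}+T}(\omega),\omega) \quad \text{for all such } T.
\end{align*}

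We then use the normal form of $3$-forms in dimension $6$, that is, the $GL_6$-orbit classification restricted to orthonormal frames. This lets us choose $\omega$ in a reduced form in a well-adapted basis. We compute the quadratic form $Q_\omega(S) = |S\omega|^2$ explicitly in block form. We then choose $T$, depending on $\omega$, so that after replacing $\mathcal{R}$ by $\mathcal{R}+T$ the effective weights satisfy the uniform bound $\mu = \tfrac{2}{9}\sum_\alpha |S_\alpha\omega|^2$, while $\tfrac{9}{2}$-nonnegativity still controls the required eigenvalue sums.

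Concretely, we must show that the eigenvalues of $Q_\omega$, a symmetric form on $\mathfrak{so}(6)$ with trace $9|\omega|^2$, can be redistributed modulo the kernel directions of $\mathcal{R}\mapsto$ Bochner term so that none exceeds $2|\omega|^2$. This is the main step. I would attempt it by direct computation in the normal-form parameters, and confirm sharpness via the equality case referred to in Example \ref{ExampleSharpCandidate}.
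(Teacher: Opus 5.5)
Your reduction of (a)--(c) to an algebraic inequality and your Abel-summation lemma are fine. The Thorpe step, however, is set up in the wrong direction. Adding to $\mathcal{R}$ a curvature operator $T$ with $\tr(T\mathcal{A}_\omega)=0$ (here $g(\mathcal{A}_\omega\alpha,\beta)=g(\alpha\omega,\beta\omega)$ is your $Q_\omega$) leaves the Bochner term unchanged. But your hypothesis constrains the spectrum of $\mathcal{R}$, and nothing makes $\mathcal{R}+T$ $\frac92$-nonnegative, so the weighted lemma cannot be applied to $\mathcal{R}+T$ and the step gains nothing. In Thorpe's setting the modification by $\star$ is invisible to the \emph{hypothesis} (sectional curvature); here the invisible directions sit on the other side of the pairing $\tr(\mathcal{R}\,\cdot\,)$. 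The correct move is to keep $\mathcal{R}$ and replace $\mathcal{A}_\omega$ by $\mathcal{A}_\omega+\hat\eta$ with $\eta\in\Lambda^4V^*$. Since $\widehat{\Lambda^4V^*}$ is the orthogonal complement of $\Sym^2_B(\Lambda^2V)$ in $\Sym^2(\Lambda^2V)$, one has $g(\Ric_L(\omega),\omega)=\tr(\mathcal{R}(\mathcal{A}_\omega+\hat\eta))$ for every $\eta$. Your lemma, applied to the diagonal entries of $\mathcal{A}_\omega+\hat\eta$ in an eigenbasis of $\mathcal{R}$, then needs $0\le\mathcal{A}_\omega+\hat\eta\le 2|\omega|^2\id_{\Lambda^2V}$. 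Both bounds matter. Your final paragraph only asks that no eigenvalue exceed $2|\omega|^2$, but $\hat\eta$ is traceless and readily creates negative eigenvalues. It is precisely nonnegativity that limits the choice of $\eta$: in the paper's normal form it forces $-(b+c)^2\le h_{12}\le(b-c)^2$ on $E_1\wedge E_2$, which in general rules out the choice making the $E_0$-block a multiple of the identity.

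The decisive gap is that the existence of such an $\eta$ for every $\omega\in\Lambda^3(\R^6)^*$ is the whole content of the theorem, and you leave it as something you ``would attempt by direct computation''. Two ingredients are missing. First, you need a normal form under $SO(6)$. The $GL_6$-classification has finitely many orbits, while $SO(6)$-orbits come in a family of dimension at least $20-15=5$, so it cannot just be ``restricted to orthonormal frames''. The paper uses that $R_t=\cos t\,\id+\sin t\,*$ commutes with the $\mathfrak{so}(6)$-action, hence $\mathcal{A}_{R_t\omega}=\mathcal{A}_\omega$. It combines this with Hitchin's splitting of $\omega-\sqrt{-1}*\omega$ into two decomposable complex forms and a Takagi diagonalization to reach $R_t\omega=ae^{123}+be^{156}-ce^{246}+de^{345}$ in an orthonormal frame, with $a\le b\le c\le d$, $0\le b+c$, $a+c+d\le b$. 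Second, set $E_i=\operatorname{span}\{e_i,e_{i+3}\}$ and $E_0=\operatorname{span}\{e_1\wedge e_4,e_2\wedge e_5,e_3\wedge e_6\}$, and take $\eta=\sum_{i<j}h_{ij}\,e^i\wedge e^{i+3}\wedge e^j\wedge e^{j+3}$ with case-dependent $h_{ij}$. One then checks nonnegativity on each $E_i\wedge E_j$ via those inequalities, and bounds the row sums of the off-diagonal part on $E_0$ by $|\omega|^2$ to place that block in $[0,2|\omega|^2]$. Without this, (a)--(c) are not proved. Finally, your counterexample to the uniform bound is wrong. For $\omega=e^{123}$, $\mathcal{A}_\omega$ is the orthogonal projection onto $E\wedge E^\perp$ with $E=\operatorname{span}\{e_1,e_2,e_3\}$, so $|S\omega|^2\le|\omega|^2$ for unit $S$, with equality for your $S$. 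The value $3|\omega|^2$ is attained by $\omega=\operatorname{Re}\,\varphi^1\wedge\varphi^2\wedge\varphi^3$, where $\varphi^j=\frac{1}{\sqrt2}(e^{2j-1}+\sqrt{-1}e^{2j})$, together with $S=\frac{1}{\sqrt3}(e_1\wedge e_2+e_3\wedge e_4+e_5\wedge e_6)$.
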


The proofs of Theorems \ref{GeneralHomologySphereTheorem} --  \ref{TheoremThirdBettiNumber} are based on a new tool in the Bochner technique, which we will describe in the following. If $\omega \in \Lambda^p T^{*}M$ is a harmonic form, then 
\begin{align*}
    \Delta \frac{1}{2} | \omega|^2 = | \nabla \omega|^2 + g(\Ric_L(\omega),\omega).
\end{align*}
In particular, the maximum principle implies that all harmonic forms are parallel provided $g(\Ric_L( \omega), \omega) \geq 0.$ Poor \cite{PoorHolonomyProofPosCurvOperatorThm} noticed that the curvature term of the Lichnerowicz Laplacian can be written in terms of the curvature operator. Explicitly, if $\{ \Xi_{\alpha} \}$ is an orthonormal eigenbasis for $\mathcal{R}$ with corresponding eigenvalues $\{ \lambda_{\alpha} \},$ then 
\begin{align*}
    g(\Ric_L(\omega),\omega) = \sum\nolimits_{\alpha} \lambda_{\alpha} | \Xi_{\alpha} \omega |^2,
\end{align*}
where $\Xi \omega$ is the Lie algebra action of $\Lambda^2TM \cong \mathfrak{so}(TM)$ on $\Lambda^p T^*M.$ In \cite{PetersenWinkNewCurvatureConditionsBochner}, Petersen and the author used this identity to obtain vanishing, rigidity and estimation results. We note that their estimates work for every self-adjoint operator $R \in \Sym^2(\Lambda^2TM)$ and improve upon their techniques by introducing a Thorpe trick to the Bochner technique. Specifically, if we define 
\begin{align*}
    \mathcal{A}_{\omega} \colon \Lambda^2TM \to \Lambda^2TM, \ g(  \mathcal{A}_{\omega} (\alpha), \beta) = g( \alpha \omega, \beta \omega),
\end{align*}
then
\begin{align*}
    g(\Ric_L(\omega),\omega) = \tr( \mathcal{\mathcal{R} A_{\omega}}) = \tr( \mathcal{\mathcal{R} (A_{\omega}}+\hat{\eta}))
\end{align*}
for every $\hat{\eta} \in\widehat{ \Lambda^4T^*M}$ since the space of algebraic curvature operators is orthogonal to operators induced by $4$-forms. We may therefore change $\mathcal{A}_{\omega}$ by a $4$-form operator without changing the curvature term in the Lichnerowicz Laplacian. The key idea is to replace $\mathcal{A}_{\omega}$ with an operator that has better nonnegativity properties. Similar ideas were used in the context of sectional curvature in \cite{ThorpeCurvatureTensorPosCurvedFourMf, PuettmannOptimalPinchingConstants, GroveVerdianiZillerExoticT1S4Pos, BettiolMendesStronglyPositiveCurvature}.

In Theorem \ref{SharpnessCriterion} we provide a sharp criterion characterizing nonnegativity of $\Ric_L$ on $p$-forms in terms of eigenvalues of the curvature operator, generalizing the approach in \cite{PetersenWinkNewCurvatureConditionsBochner}. In particular, we show that $\Ric_L \geq 0$ on $\Lambda^pT^*M$ for every $k$-nonnegative curvature tensor if and only if for every $\omega \in \Lambda^p T^*M$ there exists $\eta \in \Lambda^4T^*M$ such that 
\begin{align*}
      0 \leq \mathcal{A}_{\omega} + \hat{\eta} \leq \frac{\tr \mathcal{A}_{\omega}}{k}  \id_{\Lambda^2TM} = \frac{p(n-p)}{k} |\omega|^2  \id_{\Lambda^2TM}.
\end{align*}

For $p \leq \frac{n}{2},$ the estimate $| L \omega|^2 \leq p |L|^2 |\omega|^2$ from \cite[Lemma 2.2]{PetersenWinkNewCurvatureConditionsBochner} shows that one can take $\eta=0$ and $k=p.$ By analyzing the action operator $\mathcal{A}_{\omega}$ for forms $\omega \in \Lambda^pT^*M$ with $|L\omega|^2 = p |L|^2 |\omega|^2,$ we deduce in Theorem \ref{EpsilonImprovement} that for $3 \leq p \leq \frac{n}{2}$ and every $\omega \in \Lambda^pT^*M$ there exists $\eta \in \Lambda^4T^*M$ such that 
\begin{align*}
     0 \leq \mathcal{A}_{\omega} + \hat{\eta} \leq (p - \varepsilon) |\omega|^2 \id_{\Lambda^2TM}.
\end{align*}
Remark \ref{EffectiveEstimateRemark} provides a quantitative estimate on $\varepsilon>0$ and in particular shows that $\varepsilon>0$ may be chosen independent of $n.$ Furthermore, no such $\varepsilon>0$ can be obtained for $p=1,2.$

Moreover, using a normal form for $3$-forms in dimension $n=6,$ see Theorem \ref{NormalForm}, we prove in Theorem \ref{ThorpeTrick36} that for every $\omega \in \Lambda^3T^*M^6$ there exists $\eta \in \Lambda^4T^*M^6$ such that
\begin{align*}
     0 \leq \mathcal{A}_{\omega} + \hat{\eta} \leq 2 |\omega|^2 \id_{\Lambda^2TM^6}.
\end{align*}
Together with the nonnegativity criterion in Theorem \ref{SharpnessCriterion} this implies Theorems \ref{GeneralRigidityTheorem} and \ref{TheoremThirdBettiNumber}. Example \ref{ExampleSharpCandidate} shows that the factor $2$ obtained in Theorem \ref{ThorpeTrick36} is optimal.

More generally, we conjecture that an analogous theorem holds in higher dimensions. Specifically, we ask for
\begin{conjecture}
\label{ConjectureControllingLichnerowicz}
    Let $(V,g)$ be a Euclidean vector space. For every $\omega \in \Lambda^p V^{*}$ there exists $\eta \in \Lambda^4V^*$ such that
    \begin{align*}
        0 \leq \mathcal{A}_{\omega} + \hat{\eta} \leq 2 |\omega|^2 \id_{\Lambda^2V}.
    \end{align*}
\end{conjecture}
Example \ref{ExampleSharpCandidate} shows that the factor $2$ in Conjecture \ref{ConjectureControllingLichnerowicz} cannot be improved. Together with Theorem \ref{SharpnessCriterion}, Conjecture \ref{ConjectureControllingLichnerowicz} would imply 
\begin{conjecture}
\label{RiemannianConjecture}
    Let $(M,g)$ be a closed $n$-dimensional Riemannian manifold. If the curvature operator of $(M,g)$ is $\frac{p(n-p)}{2}$-positive, then the $p$-th Betti number vanishes, $b_p(M)=0,$ for $2 \leq  p \leq n-2.$

    In particular, if $(M,g)$ is orientable with $(n-2)$-positive curvature operator, then $M$ is a real homology sphere.
\end{conjecture}
Note that $S^1 \times S^{n-1}$ has $(n-1)$-nonnegative curvature operator but is neither simply connected nor admits a metric of positive Ricci curvature. Theorem \ref{TheoremThirdBettiNumber} proves Conjecture \ref{RiemannianConjecture} in dimension $n=6.$\vspace{2mm}

\textbf{AI disclosure.} The author used OpenAI’s ChatGPT as an assistive tool in the development of the paper. The author takes full responsibility for all mathematical arguments and the manuscript was entirely written by the author. 
\vspace{2mm}

\textbf{Acknowledgments.} The author's research is partially supported by travel grant SFI-MPS-TSM-00026032 funded by the Simons Foundation International and administered by the Simons Foundation.

\section{Preliminaries}

Let $(V,g)$ be an $n$-dimensional Euclidean vector space. The vector space of self-adjoint operators on $\Lambda^2V$ decomposes into orthogonal $O(V)$-invariant subspaces as
\begin{align*}
    \Sym^2(\Lambda^2V) = \Sym^2_B(\Lambda^2V) \oplus \widehat{\Lambda^4 V^{*}}
\end{align*}
where $\Sym^2_B(\Lambda^2V)$ is the space of algebraic curvature operators. Note that the operator $\hat{\eta}$ induced by a $4$-form $\eta \in \Lambda^4 V^{*}$ is given by
\begin{align*}
    g(\hat{\eta}(x \wedge y), z \wedge w) = \eta(x,y,z,w).
\end{align*}

We identify $\Lambda^2V$ with $\mathfrak{so}(V)$ by setting 
\begin{align*}
    (x \wedge y)z = g(x,z)y - g(y,z)x
\end{align*}
and use the convention that if $e_1, \ldots, e_n$ is an orthonormal basis for $V,$ then $\{ e_{i_1} \wedge \ldots \wedge e_{i_p} \}_{1 \leq i_1 < \ldots < i_p \leq n}$ is an orthonormal basis for $\Lambda^pV.$ The identification of $\Lambda^2V$ with $\mathfrak{so}(V)$ becomes an isometry if $\left\langle A, B \right\rangle = - \frac{1}{2} \tr (AB)$ is the inner product on $\mathfrak{so}(V).$ 

The orthogonal group $O(V)$ acts on $\Lambda^pV$ by
\begin{align*}
    O \cdot x_1 \wedge \ldots \wedge x_p = Ox_1 \wedge \ldots \wedge Ox_p.
\end{align*}
The induced action on $p$-forms $\Lambda^pV^{*}$ is given by
\begin{align*}
    (O \cdot \omega)(X_1, \ldots, X_p) = \omega(O^{-1}X_1, \ldots, O^{-1}X_p)
\end{align*}
and the corresponding Lie algebra action of $\mathfrak{so}(V)$ is 
\begin{align*}
    (\Xi \omega)(X_1, \ldots, X_p) = - \sum_{i=1}^p \omega(X_1, \ldots, \Xi X_i, \ldots, X_p).
\end{align*}

\section{The curvature term of the Lichnerowicz Laplacian}

\begin{definition}\label{ActionOperator}
    For $\omega \in \Lambda^p V^{*}$ define $\mathcal{A}_{\omega} \colon \Lambda^2V \to \Lambda^2V$ by
    \begin{align*}
      g(  \mathcal{A}_{\omega} (\alpha), \beta) = g( \alpha \omega, \beta \omega)
    \end{align*}
for $\alpha, \beta \in \Lambda^2V \cong \mathfrak{so}(V).$ We call $\mathcal{A}_{\omega}$ the {\em action operator} induced by $\omega.$
\end{definition}
Note that $\mathcal{A}_{\omega}$ is self-adjoint, nonnegative and satisfies
\begin{align} \label{ActionEquivariance}
    \mathcal{A}_{O \cdot \omega} (\alpha)= O \cdot \mathcal{A}_{\omega}\left(O^{-1} \cdot \alpha \right)
\end{align}
for all $O \in O(V)$ and $\alpha \in \Lambda^2V.$

Moreover, the estimates in \cite[Lemma 2.2 and Proposition 2.5]{PetersenWinkNewCurvatureConditionsBochner} say that 
\begin{align*}
    \tr \mathcal{A}_{\omega} = p(n-p) | \omega|^2 \text{ and } \ 0 \leq
    \mathcal{A}_{\omega}   \leq \min \{ p, n-p \}  | \omega |^2 \id_{\Lambda^2 V}.
\end{align*}

We define the bilinear operator $\mathcal{B}$ associated to $\mathcal{A}$ as follows. 

\begin{definition}
    For $\omega, \omega_1, \omega_2 \in \Lambda^p V^*$ define 
    \begin{enumerate}
        \item $\mathcal{D}_{\omega} \colon \Lambda^2V \to \Lambda^pV^*$ by $\mathcal{D}_{\omega}\alpha = \alpha \omega$ and 
        \item $\mathcal{B}_{\omega_1, \omega_2} \colon \Lambda^2V \to \Lambda^2V$ by $\mathcal{B}_{\omega_1, \omega_2} = \mathcal{D}_{\omega_1}^* \mathcal{D}_{\omega_2} + \mathcal{D}_{\omega_2}^* \mathcal{D}_{\omega_1}.$
    \end{enumerate}
\end{definition}

We have the following elementary properties.

\begin{proposition}
\label{PropertiesBilinearOperator}
If $\omega, \omega_1, \omega_2 \in \Lambda^p V^*$ and $\alpha, \beta \in \Lambda^2V,$ then
    \begin{enumerate}
        \item $|| \mathcal{D}_{\omega} ||^2_{\text{op}} \leq p |\omega|^2,$
        \item $\mathcal{B}_{\omega_1, \omega_2}$ is self-adjoint,
        \item $|| \mathcal{B}_{\omega_1, \omega_2} ||_{\text{op}} \leq 2p |\omega_1| | \omega_2|,$
        \item $g(\mathcal{B}_{\omega_1, \omega_2}\alpha, \beta)  = g( \alpha \omega_1, \beta \omega_{2}) + g( \beta \omega_{1}, \alpha \omega_2)$ 
        \item[] \hspace{21.4mm} $ = 2g( \alpha \omega_1, \beta \omega_{2}) -  g([\alpha, \beta] \omega_1, \omega_2).$
        \item[(e)] $\mathcal{A}_{\omega_1 + \omega_2}= \mathcal{A}_{\omega_1} + \mathcal{B}_{\omega_1,\omega_2} + \mathcal{A}_{\omega_2}.$ 
    \end{enumerate}
\end{proposition}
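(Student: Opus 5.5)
Each item follows from unwinding the definitions, together with the operator bound $0 \leq \mathcal{A}_{\omega} \leq \min\{p,n-p\}|\omega|^2 \id_{\Lambda^2V}$ quoted from \cite[Lemma 2.2 and Proposition 2.5]{PetersenWinkNewCurvatureConditionsBochner}. I would take the items in the order (a), (b), (d), (e), (c), since (c) is easiest once (a) and the first identity of (d) are available.

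For (a), note that $g(\mathcal{D}_\omega^*\mathcal{D}_\omega \alpha,\beta) = g(\alpha\omega,\beta\omega)$, so $\mathcal{D}_\omega^*\mathcal{D}_\omega = \mathcal{A}_\omega$. Hence $\|\mathcal{D}_\omega\|_{\text{op}}^2 = \|\mathcal{A}_\omega\|_{\text{op}} \leq \min\{p,n-p\}|\omega|^2 \leq p|\omega|^2$; this is just the estimate $|L\omega|^2 \leq p|L|^2|\omega|^2$. Item (b) holds because $(\mathcal{D}_{\omega_1}^*\mathcal{D}_{\omega_2})^* = \mathcal{D}_{\omega_2}^*\mathcal{D}_{\omega_1}$, so $\mathcal{B}_{\omega_1,\omega_2}$ is a sum of an operator and its adjoint.

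For the first identity of (d), write $g(\mathcal{D}_{\omega_1}^*\mathcal{D}_{\omega_2}\alpha,\beta) = g(\alpha\omega_2,\beta\omega_1)$, and do the same for the other term. For the second identity, I would use two facts:
\begin{itemize}
\item the action of $\mathfrak{so}(V)$ on $\Lambda^pV^*$ is skew-adjoint, which follows from differentiating the orthogonal $O(V)$-action;
\item it is a Lie algebra representation, so $[\alpha,\beta]\omega = \alpha(\beta\omega) - \beta(\alpha\omega)$.
\end{itemize}
Then
\begin{align*}
g(\beta\omega_1,\alpha\omega_2) = -g(\alpha\beta\omega_1,\omega_2) = -g([\alpha,\beta]\omega_1,\omega_2) - g(\beta\alpha\omega_1,\omega_2) = -g([\alpha,\beta]\omega_1,\omega_2) + g(\alpha\omega_1,\beta\omega_2).
\end{align*}
Adding $g(\alpha\omega_1,\beta\omega_2)$ to both sides gives the claim. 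The bracket convention needs care here. With the identification $(x\wedge y)z = g(x,z)y - g(y,z)x$, the map $\Lambda^2V\to\mathfrak{so}(V)$ is an isomorphism, and the derived action is a representation for the commutator bracket of endomorphisms. The bracket $[\alpha,\beta]$ should therefore be read as the commutator in $\mathfrak{so}(V)$, and checking that the sign conventions match is the only genuine point of care.

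For (e), use linearity $\mathcal{D}_{\omega_1+\omega_2} = \mathcal{D}_{\omega_1} + \mathcal{D}_{\omega_2}$ and expand
\begin{align*}
\mathcal{A}_{\omega_1+\omega_2} = (\mathcal{D}_{\omega_1}+\mathcal{D}_{\omega_2})^*(\mathcal{D}_{\omega_1}+\mathcal{D}_{\omega_2}) = \mathcal{A}_{\omega_1} + \mathcal{B}_{\omega_1,\omega_2} + \mathcal{A}_{\omega_2}.
\end{align*}
For (c), self-adjointness from (b) gives $\|\mathcal{B}_{\omega_1,\omega_2}\|_{\text{op}} = \sup_{|\alpha|=1}|g(\mathcal{B}_{\omega_1,\omega_2}\alpha,\alpha)|$. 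By the first identity of (d), $g(\mathcal{B}_{\omega_1,\omega_2}\alpha,\alpha) = 2g(\alpha\omega_1,\alpha\omega_2)$. Cauchy--Schwarz together with (a) then bounds this by $2|\alpha\omega_1||\alpha\omega_2| \leq 2p|\omega_1||\omega_2|$. Apart from the bracket convention in (d), I expect no obstacle.
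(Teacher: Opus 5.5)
Your proposal is correct and follows essentially the same route as the paper: (a) via $\mathcal{D}_\omega^*\mathcal{D}_\omega = \mathcal{A}_\omega$ and the Petersen--Wink estimate $\mathcal{A}_\omega \leq p|\omega|^2 \id$, (b) and (e) from the definitions, and (d) from skew-adjointness of the derived $O(V)$-action together with the representation property. Your reading of $[\alpha,\beta]$ as the commutator in $\mathfrak{so}(V)$ is the right one. The only small deviation is in (c). The paper obtains it directly from (a), implicitly via $\|\mathcal{D}_{\omega_1}^*\mathcal{D}_{\omega_2}\|_{\text{op}} \leq \|\mathcal{D}_{\omega_1}\|_{\text{op}}\|\mathcal{D}_{\omega_2}\|_{\text{op}} \leq p|\omega_1||\omega_2|$ for each of the two terms. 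You instead use the quadratic form of the self-adjoint operator and Cauchy--Schwarz; both are one-line arguments.
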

\begin{proof}
    (a) is the estimate \cite[Lemma 2.2]{PetersenWinkNewCurvatureConditionsBochner} as $|| \mathcal{D}_{\omega} ||_{\text{op}}^2 = \lambda_{\max}(\mathcal{D}_{\omega}^* \mathcal{D}_{\omega}) = \lambda_{\max}(\mathcal{A}_{\omega}) \leq p | \omega |^2.$ Furthermore, (b) follows from the definition, (c) directly from (a), and (d) follows from skew symmetry of $\alpha, \beta \in \Lambda^2V \cong \mathfrak{so}(V)$ and the fact that $\alpha \mapsto \alpha \omega$ is a Lie algebra action. (e) is clear from the definition.
\end{proof}

If $T$ is a $(0,k)$-tensor on $V$ and $R$ an algebraic curvature tensor, then set
\begin{align*}
    (\Ric_LT)(X_1, \ldots, X_k) = \sum_{i=1}^k \sum_{j=1}^n (R(X_i, e_j)T)(X_1, \ldots, e_j, \ldots, X_k),
\end{align*}
where $R(X,Y)$ is considered in $\mathfrak{so}(V).$ Building up on work of Poor \cite{PoorHolonomyProofPosCurvOperatorThm}, Petersen established in \cite[Lemma 9.3.3 and Lemma 9.4.3]{PetersenRiemGeom} that 
\begin{align*}
    g(\Ric_L(\omega), \omega) = \sum\nolimits_{\alpha} \lambda_{\alpha} | \Xi_{\alpha} \omega |^2,
\end{align*}
where $\{ \Xi_{\alpha} \}$ is an orthonormal eigenbasis of the curvature operator associated to $R$ with eigenvalues $\{ \lambda_{\alpha} \}$, i.e., $\mathcal{R}( \Xi_{\alpha}) = \lambda_{\alpha} \Xi_{\alpha}.$

Since algebraic curvature operators are orthogonal to $4$-forms, we obtain the following key observation immediately from Definition \ref{ActionOperator}.

\begin{proposition} \label{LichnerowiczAsTrace}
If $\omega \in \Lambda^p V^{*}$ and $\mathcal{R} \in \Sym^2_B(\Lambda^2V),$ then 
\begin{align*}
    g(\Ric_L(\omega),\omega) = \tr \left( \mathcal{R} \mathcal{A}_{\omega} \right) = \tr \left( \mathcal{R} (\mathcal{A}_{\omega} + \hat{\eta}) \right)
\end{align*}
for every $\eta \in \Lambda^4 V^{*}.$
\end{proposition}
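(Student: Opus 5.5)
The first equality comes from rewriting the Poor--Petersen formula as a trace against $\mathcal{A}_{\omega}$; the second comes from the orthogonal decomposition $\Sym^2(\Lambda^2V) = \Sym^2_B(\Lambda^2V) \oplus \widehat{\Lambda^4 V^{*}}$.

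\emph{First equality.} Fix an orthonormal eigenbasis $\{\Xi_\alpha\}$ of $\mathcal{R}$ with $\mathcal{R}\Xi_\alpha = \lambda_\alpha \Xi_\alpha$. By Definition \ref{ActionOperator},
\begin{align*}
\tr(\mathcal{R}\mathcal{A}_\omega) = \sum_\alpha g(\mathcal{A}_\omega \Xi_\alpha, \mathcal{R}\Xi_\alpha) = \sum_\alpha \lambda_\alpha\, g(\Xi_\alpha\omega, \Xi_\alpha\omega) = \sum_\alpha \lambda_\alpha |\Xi_\alpha \omega|^2 ,
\end{align*}
and the right-hand side equals $g(\Ric_L(\omega),\omega)$ by the formula of Poor and Petersen quoted above.

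\emph{Second equality.} By linearity it suffices to show $\tr(\mathcal{R}\hat\eta)=0$. Note that $\hat\eta$ is self-adjoint, since $\eta(x,y,z,w)=\eta(z,w,x,y)$ for a $4$-form. The inner product on $\Sym^2(\Lambda^2V)$ is $\langle S,T\rangle=\tr(ST)$, so $\tr(\mathcal{R}\hat\eta)=\langle \mathcal{R},\hat\eta\rangle$. This vanishes because the decomposition in the preliminaries is orthogonal.

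To make the last step self-contained, expand in an orthonormal basis $e_i\wedge e_j$, $i<j$:
\begin{align*}
\tr(\mathcal{R}\hat\eta) = \frac14\sum_{i,j,k,l} R_{ijkl}\,\eta_{ijkl}
\end{align*}
(up to the sign convention for $R$). Since $\eta$ is totally antisymmetric, only the totally antisymmetrized part of $R$ contributes, and that part is a constant multiple of the cyclic sum $R_{ijkl}+R_{jkil}+R_{kijl}$. The first Bianchi identity makes this cyclic sum vanish, so $\tr(\mathcal{R}\hat\eta)=0$.

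There is no real obstacle here. The only care needed is consistency of normalizations, namely that $g(\mathcal{A}_\omega\alpha,\beta)=g(\alpha\omega,\beta\omega)$ uses the same inner product on $\Lambda^2V$ as the eigenbasis in Poor's formula, which holds by the conventions fixed in the preliminaries.
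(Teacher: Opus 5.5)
Your proposal is correct and follows the paper's own argument. The paper also reads off the first equality from the definition of $\mathcal{A}_\omega$ together with Poor's formula $g(\Ric_L(\omega),\omega)=\sum_\alpha \lambda_\alpha|\Xi_\alpha\omega|^2$, and gets the second from the orthogonality of $\Sym^2_B(\Lambda^2V)$ and $\widehat{\Lambda^4V^*}$. Your Bianchi-identity check of that orthogonality is accurate: the total antisymmetrization of $R$ is one third of the cyclic sum. It simply spells out a fact the paper takes as standard.
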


Theorem \ref{SharpnessCriterion} provides a sharp criterion for when sums of eigenvalues of the curvature operator control the curvature term of the Lichnerowicz Laplacian. It is based on Proposition \ref{LichnerowiczAsTrace} and the following lemma.

\begin{lemma} \label{EigenvalueLemma}
    Let $(E,g)$ be a Euclidean vector space, $1 \leq k \leq \dim E$ and $\kappa \in \R.$    
    \begin{enumerate}
        \item Let $R,A \in \Sym^2(E)$ and suppose that $0 \leq A \leq \frac{\tr(A)}{k} \id_E$.
        
        If the eigenvalues of $R$ satisfy $\lambda_1 + \ldots + \lambda_{\floor{k}} + (k-\floor{k}) \lambda_{\floor{k}+1} \geq k \kappa$ then $\tr(RA) \geq \tr(A)\kappa.$
        
        If $R$ is $k$-positive and $\tr(A)>0,$ then $\tr(RA)>0.$
        \item If $A \in \Sym^2(E)$ satisfies $\tr(RA) \geq 0$ for every $R \in \Sym^2(E)$ with $\lambda_1 + \ldots + \lambda_{\floor{k}} + (k-\floor{k}) \lambda_{\floor{k}+1} \geq k \kappa,$ then $0 \leq A \leq \frac{\tr(A)}{k} \id_E.$
    \end{enumerate}
\end{lemma}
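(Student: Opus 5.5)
For part (a) we diagonalize $R$ in an orthonormal eigenbasis $\{\Xi_\alpha\}$ with eigenvalues $\lambda_1 \leq \ldots \leq \lambda_N$, $N=\dim E$. In this basis $\tr(RA) = \sum_\alpha \lambda_\alpha a_\alpha$ with $a_\alpha = g(A\Xi_\alpha,\Xi_\alpha)$. The hypothesis $0 \leq A \leq \frac{\tr A}{k}\id_E$ gives $0 \leq a_\alpha \leq \frac{\tr A}{k}$ and $\sum_\alpha a_\alpha = \tr A$.

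First we reduce to $\kappa = 0$. Replacing $R$ by $R-\kappa\,\id_E$ shifts every eigenvalue by $-\kappa$, which changes the weighted sum $\lambda_1+\ldots+\lambda_{\floor{k}}+(k-\floor{k})\lambda_{\floor{k}+1}$ by $-k\kappa$ and changes $\tr(RA)$ by $-\kappa\tr A$. So it suffices to show that $\sum_\alpha \lambda_\alpha a_\alpha \geq 0$ whenever the weighted sum is nonnegative. If $\tr A = 0$ then $A=0$ and there is nothing to prove, so normalize $\tr A = k$, which gives $a_\alpha \in [0,1]$ and $\sum_\alpha a_\alpha = k$.

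The core step is a rearrangement (bathtub) argument. Among all weights $a \in [0,1]^N$ with $\sum_\alpha a_\alpha = k$, the linear functional $a\mapsto\sum_\alpha \lambda_\alpha a_\alpha$ is minimized by putting as much weight as possible on the smallest eigenvalues: $a_1=\ldots=a_{\floor{k}}=1$, $a_{\floor{k}+1}=k-\floor{k}$, and all other weights zero. To justify this, set $b_\alpha$ to be this extremal weight vector and write
\begin{align*}
\sum_\alpha \lambda_\alpha(a_\alpha - b_\alpha) \geq \lambda_{\floor{k}+1}\sum_\alpha (a_\alpha - b_\alpha) = 0.
\end{align*}
The inequality holds because $a_\alpha - b_\alpha \leq 0$ for $\alpha \leq \floor{k}$, where $\lambda_\alpha \leq \lambda_{\floor{k}+1}$, and $a_\alpha - b_\alpha \geq 0$ for $\alpha > \floor{k}+1$, where $\lambda_\alpha \geq \lambda_{\floor{k}+1}$. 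Hence
\begin{align*}
\tr(RA) \geq \lambda_1+\ldots+\lambda_{\floor{k}}+(k-\floor{k})\lambda_{\floor{k}+1} \geq 0.
\end{align*}
For the strict statement, $k$-positivity of $R$ together with $\tr A>0$ gives a strict final inequality after normalization, so $\tr(RA)>0$.

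For part (b) we argue by testing with specific $R$. First take $R = \kappa\,\id_E + t\,P$, where $P$ is the orthogonal projection onto a unit vector $v$ and $t>0$. Its weighted sum is at least $k\kappa$, so the hypothesis gives $\tr(RA) = \kappa\tr A + t\,g(Av,v) \geq 0$. Letting $t\to\infty$ yields $g(Av,v)\geq 0$, hence $A \geq 0$.

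For the upper bound, take $R = c\,\id_E - s\,P$ with $s>0$, and choose $c$ so that the weighted sum of eigenvalues equals exactly $k\kappa$. The eigenvalues of $R$ are $c-s$ (once) and $c$, so for $k \geq 1$ the weighted sum is $kc - s$, giving $c = \kappa + s/k$. The hypothesis then gives
\begin{align*}
0 \leq \tr(RA) = \Bigl(\kappa + \frac{s}{k}\Bigr)\tr A - s\,g(Av,v).
\end{align*}
Dividing by $s$ and letting $s\to\infty$ yields $g(Av,v) \leq \frac{\tr A}{k}$, i.e.\ $A \leq \frac{\tr A}{k}\id_E$.

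The only delicate points are the bookkeeping of the fractional eigenvalue index in the rearrangement step and checking that the test operators in (b) satisfy the hypothesis with the correct weighted sum. For the latter, note that when $k=1$ the smallest eigenvalue $c-s$ alone gives the sum $c-s=kc-s$, so the formula $kc-s$ for the weighted sum still holds.
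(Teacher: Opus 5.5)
Your proof is correct and follows essentially the same route as the paper. Part (a) is the same comparison of $\sum_\alpha \lambda_\alpha g(A\Xi_\alpha,\Xi_\alpha)$ against the extremal weights concentrated on the lowest $\floor{k}+1$ eigenvalues. In (b), your test operators $\kappa\,\id_E + tP$ and $c\,\id_E - sP$ (with $c=\kappa+s/k$) are, after reparametrization, exactly the paper's $aR_1+\kappa\,\id_E$ and $a(k-1)R_2+\kappa\,\id_E$. Your parametrization has the small advantage of treating all $\kappa$ at once and of covering $k=1$, where the paper's $R_2=\frac{1}{k-1}\id_E-\frac{k}{k-1}\Xi\otimes g(\Xi,\cdot)$ is undefined.
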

\begin{proof}
    (a) If $\{ \Xi_{\alpha} \}$ is an orthonormal eigenbasis for $R$ with corresponding eigenvalues $\{ \lambda_{\alpha} \},$ then
    \begin{align*}
        \tr(RA)  = & \ \sum_{\alpha} \lambda_{\alpha} g(A \Xi_{\alpha}, \Xi_{\alpha}) \\
        \geq & \ \sum_{\alpha=1}^{\floor{k}} \lambda_{\alpha} g(A \Xi_{\alpha}, \Xi_{\alpha}) + \lambda_{\floor{k}+1} \sum_{\alpha \geq \floor{k}+1} g(A \Xi_{\alpha}, \Xi_{\alpha}) \\
        = & \ \sum_{\alpha=1}^{\floor{k}} \left( \lambda_{\alpha} - \lambda_{\floor{k}+1} \right) g(A \Xi_{\alpha}, \Xi_{\alpha}) + \lambda_{\floor{k}+1} \tr(A) \\
        \geq & \ \frac{\tr(A)}{k} \sum_{\alpha=1}^{\floor{k}} \left( \lambda_{\alpha} - \lambda_{\floor{k}+1} \right) + \lambda_{\floor{k}+1} \tr(A) \\
        = & \ \frac{\tr(A)}{k} \left( \sum_{\alpha=1}^{\floor{k}} \lambda_{\alpha} + (k-\floor{k}) \lambda_{\floor{k}+1} \right)
    \end{align*}
    and the claim follows.

    (b) Suppose that $A(\Xi)=\lambda \Xi.$ First consider $\kappa \leq 0.$ Taking $R_1= \Xi \otimes g(\Xi, \cdot)$ implies $\lambda = \tr(R_1A) \geq 0,$ hence $A \geq 0.$ Set $R_2=\frac{1}{k-1} \id_E - \frac{k}{k-1} \Xi \otimes g(\Xi, \cdot).$ Then $R_2(\Xi)=-\Xi$ and all other eigenvalues are $\frac{1}{k-1}.$ Hence $R_2$ is $k$-nonnegative and therefore $0 \leq \tr(R_2A)=\frac{1}{k-1} \tr(A) - \frac{k}{k-1} \lambda = \frac{\tr(A)-k\lambda}{k-1}.$ In particular, $A \leq \frac{\tr(A)}{k} \id_E.$

    If $\kappa >0,$ consider $a R_1 + \kappa \id_E$ to conclude $a\lambda + \kappa \tr(A) \geq 0$ for every $a >0$ and hence $A \geq 0.$ Similarly, consider $a(k-1)R_2 + \kappa \id_E$ to deduce $\lambda \leq \frac{\tr(A)}{k} \frac{a+ \kappa}{a}$ for every $a >0.$
\end{proof}

Recall that for a subset $C \subseteq \Sym^2{(\Lambda^2V)}$ its dual cone is given by
\begin{align*}
    C^{*} = \{ A \in \Sym^2{(\Lambda^2V)} \ | \tr(RA) \geq 0 \text{ for all } R \in C \}.
\end{align*}
Consider the closed convex cone  
\begin{align*}
    S_k = \{ R \in \Sym^2(\Lambda^2V) \ | \ \lambda_1 + \ldots + \lambda_{\floor{k}} + (k-\floor{k}) \lambda_{\floor{k}+1} \geq 0 \}.
\end{align*}

\begin{proposition} \label{DualCones}
\label{ConeLemma}
(a) The dual cone of $S_k$ is
\begin{align*}
    S_k^* = \left\lbrace A \in \Sym^2( \Lambda^2V) \ | \ 0 \leq A \leq \frac{\tr(A)}{k} \id_{\Lambda^2V} \right\rbrace.
\end{align*}
(b) The dual cone of the cone of $k$-nonnegative algebraic curvature operators is
\begin{align*}
    \left( S_k \cap \Sym_B^2( \Lambda^2V) \right)^* = S_k^* + \widehat{\Lambda^4V^*}.
\end{align*}
\end{proposition}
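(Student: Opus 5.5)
For (a), I will argue both inclusions. The inclusion $\{0 \leq A \leq \frac{\tr(A)}{k}\id\} \subseteq S_k^*$ is Lemma \ref{EigenvalueLemma}(a) with $\kappa=0$, taking $E=\Lambda^2V$. For the converse, suppose $A \in S_k^*$, so $\tr(RA)\geq 0$ for every $R \in S_k$. This is exactly the hypothesis of Lemma \ref{EigenvalueLemma}(b) with $\kappa=0$, which gives $0 \leq A \leq \frac{\tr(A)}{k}\id$.

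The test operators $R_1$ and $R_2$ in that proof are indeed in $S_k$, with the convention $k>1$ for $R_2$. For $k=1$, $S_1$ is the cone of nonnegative operators and $A\leq\tr(A)\id$ is automatic once $A\geq 0$. The fractional case is handled inside the lemma; if needed, I will recheck that $R_2$ is $k$-nonnegative for non-integer $k$, which holds since $-1+(k-1)\frac{1}{k-1}=0$.

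For (b), I will use the standard duality facts for closed convex cones $C_1,C_2$ in a finite-dimensional inner product space:
\begin{align*}
(C_1\cap C_2)^*=\overline{C_1^*+C_2^*}, \qquad L^* = L^\perp \text{ for a linear subspace } L.
\end{align*}
Take $C_1=S_k$ and $C_2=\Sym^2_B(\Lambda^2V)$. Since $\Sym^2(\Lambda^2V)=\Sym^2_B\oplus\widehat{\Lambda^4V^*}$ orthogonally, we have $C_2^*=\widehat{\Lambda^4V^*}$. Hence
\begin{align*}
\left(S_k\cap\Sym^2_B\right)^*=\overline{S_k^*+\widehat{\Lambda^4V^*}}.
\end{align*}
The easy inclusion $\supseteq$ is also directly visible: for $R\in S_k\cap\Sym^2_B$ and $A\in S_k^*$ we have $\tr(R(A+\hat\eta))=\tr(RA)\geq 0$ by orthogonality.

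The main step is removing the closure, i.e. showing $S_k^*+\widehat{\Lambda^4V^*}$ is closed. I will argue that $S_k^*$ is a closed cone with compact base. Every $A\in S_k^*$ satisfies $A\geq 0$, so $\tr A$ is a strictly positive linear functional on $S_k^*\setminus\{0\}$, and the slice $\{\tr A=1\}\cap S_k^*$ is compact. Moreover $S_k^*\cap\widehat{\Lambda^4V^*}=\{0\}$: a nonzero $\hat\eta$ has trace zero, since $\hat\eta$ is orthogonal to $\id\in\Sym^2_B$, yet it would have to be $\geq 0$, forcing $\hat\eta=0$.

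It is then a standard lemma that if $K$ is a closed cone with $K\cap(-L)=\{0\}$, equivalently $K\cap L=\{0\}$ here since $L$ is a subspace, then $K+L$ is closed. To see this, take $A_j+\hat\eta_j\to B$. If $A_j$ were unbounded, normalize by $|A_j|$ and pass to a subsequence; the limit is a nonzero element of $K\cap L$, a contradiction. So $A_j$ is bounded, hence so is $\hat\eta_j$, and subsequences converge to $A\in K$, $\hat\eta\in L$ with $A+\hat\eta=B$.

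I expect this closedness step to be the only real obstacle. Everything else follows by combining Lemma \ref{EigenvalueLemma} with elementary cone duality.
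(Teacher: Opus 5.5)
Your proof is correct. Part (a) matches the paper: Lemma \ref{EigenvalueLemma} with $\kappa=0$ in both directions. Your remark that for $k=1$ the bound $A\leq \tr(A)\id$ already follows from $A\geq 0$ usefully covers the case where the test operator $R_2$ in that lemma is undefined.

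For (b), the two arguments differ in how the closure is removed. The paper cites Rockafellar's Corollary 16.4.2 in its closure-free form, $(C_1\cap C_2)^*=C_1^*+C_2^*$. That form requires the relative interiors of the two cones to share a point. The paper does not state this hypothesis, but it holds: $\id$ lies in the interior of $S_k$, since all its eigenvalues equal $1$, and $\id$ also lies in the subspace $\Sym^2_B(\Lambda^2V)$. You instead use the unconditional formula with closure. You then prove that $S_k^*+\widehat{\Lambda^4V^*}$ is closed directly, from $S_k^*\cap\widehat{\Lambda^4V^*}=\{0\}$ and your normalization argument, which is complete as written.

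The two qualification conditions are dual to each other and rest on the same fact. Pairing with $\id\in\Sym^2_B(\Lambda^2V)$, i.e. taking the trace, is strictly positive on $S_k^*\setminus\{0\}$ and vanishes on $\widehat{\Lambda^4V^*}$. The paper's route is a one-line citation. Yours is self-contained and makes explicit the condition that the citation needs but does not check.
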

\begin{proof}
    (a) This follows from Lemma \ref{EigenvalueLemma}. For (b), note that \cite[Corollary 16.4.2]{RockafellarConvexAnalysis} applied to the closed convex cones $S_k$ and $\Sym_B^2(\Lambda^2V)$ shows 
    \begin{align*}
        \left( S_k \cap \Sym_B^2( \Lambda^2V) \right)^* = S_k^* + \left( \Sym_B^2( \Lambda^2V) \right)^* = S_k^* +  \widehat{\Lambda^4V^*}
    \end{align*}
as $\widehat{\Lambda^4V^*}$ is the orthogonal complement to $\Sym_B^2(\Lambda^2V)$ in $\Sym^2(\Lambda^2V).$
\end{proof}

\begin{theorem}\label{SharpnessCriterion}
    Let $\kappa \in \R.$ The following are equivalent:
    \begin{enumerate}
        \item Every $\mathcal{R} \in \Sym^2_B(\Lambda^2V)$ with $\lambda_1 + \ldots + \lambda_{\floor{k}} + (k-\floor{k}) \lambda_{\floor{k}+1} \geq k \kappa$ satisfies $g(\Ric_L(\omega),\omega) \geq \kappa p (n-p) |\omega|^2$ for all $\omega \in \Lambda^pV^{*}.$
        \item For every $\omega \in \Lambda^p V^{*}$ there exists $\eta \in \Lambda^4 V^{*}$ such that 
        \begin{align*}
            0 \leq \mathcal{A}_{\omega} + \hat{\eta} \leq \frac{\tr (\mathcal{A}_{\omega}+\hat{\eta})}{k} \id_{\Lambda^2V} = \frac{p(n-p)}{k} |\omega|^2  \id_{\Lambda^2V}.
        \end{align*}
    \end{enumerate}
\end{theorem}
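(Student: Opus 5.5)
The plan is to reduce the equivalence to the duality statement in Proposition \ref{DualCones}, using Proposition \ref{LichnerowiczAsTrace} to rewrite the Lichnerowicz term as a trace pairing. We first record two normalizations. Since $\hat{\eta}$ is the operator of a $4$-form, it is trace free: $g(\hat\eta(e_i\wedge e_j),e_i\wedge e_j)=\eta(e_i,e_j,e_i,e_j)=0$. Hence $\tr(\mathcal{A}_{\omega}+\hat{\eta})=\tr\mathcal{A}_{\omega}=p(n-p)|\omega|^2$, which justifies the equality on the right of (b). Next, $\tr(\id_{\Lambda^2V}\,\mathcal{A}_\omega)=p(n-p)|\omega|^2$. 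So by Proposition \ref{LichnerowiczAsTrace}, condition (a) is equivalent to
\begin{align*}
\tr\big((\mathcal{R}-\kappa\,\id)\mathcal{A}_\omega\big)\geq 0
\end{align*}
for every $\omega$ and every algebraic curvature operator $\mathcal{R}$ satisfying the eigenvalue inequality. Since $\id_{\Lambda^2V}$ is itself an algebraic curvature operator, namely that of the round sphere, the substitution $\mathcal{R}'=\mathcal{R}-\kappa\,\id$ is a bijection of $\Sym^2_B(\Lambda^2V)$. Moreover, $\mathcal{R}$ satisfies the inequality with bound $k\kappa$ exactly when $\mathcal{R}'\in S_k$. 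Thus (a) is equivalent to the statement that $\mathcal{A}_\omega\in (S_k\cap\Sym^2_B(\Lambda^2V))^*$ for all $\omega$. This removes the dependence on $\kappa$.

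For (b)$\Rightarrow$(a), suppose $\mathcal{A}_\omega+\hat\eta\in S_k^*$; this is exactly the condition $0\le \mathcal{A}_\omega+\hat\eta\le \frac{\tr(\mathcal{A}_\omega+\hat\eta)}{k}\id$. For $\mathcal{R}'\in S_k\cap \Sym^2_B$ we have
\begin{align*}
\tr(\mathcal{R}'\mathcal{A}_\omega)=\tr\big(\mathcal{R}'(\mathcal{A}_\omega+\hat\eta)\big)\ge 0,
\end{align*}
where the equality holds by orthogonality of $\Sym^2_B$ and $\widehat{\Lambda^4V^*}$, and the inequality by Proposition \ref{DualCones}(a). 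One could alternatively apply Lemma \ref{EigenvalueLemma}(a) directly to $A=\mathcal{A}_\omega+\hat\eta$ and $R=\mathcal{R}$, getting $\tr(\mathcal{R}A)\ge \kappa\tr A=\kappa p(n-p)|\omega|^2$ without shifting.

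For (a)$\Rightarrow$(b), we use Proposition \ref{DualCones}(b). If $\mathcal{A}_\omega$ lies in the dual cone of $S_k\cap\Sym^2_B$, it can be written as $\mathcal{A}_\omega=A+\hat\eta'$ with $A\in S_k^*$ and $\eta'\in\Lambda^4V^*$. Setting $\eta=-\eta'$ gives $\mathcal{A}_\omega+\hat\eta=A\in S_k^*$, which together with the trace computation is exactly (b). The main point to check here is that the sum $S_k^*+\widehat{\Lambda^4V^*}$ in Proposition \ref{DualCones}(b) is already closed, with no closure needed. I would verify this as follows: $S_k^*$ is a closed cone contained in the positive semidefinite operators, and it meets $-\widehat{\Lambda^4V^*}$ only in $0$, because a positive semidefinite operator with trace zero vanishes. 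The standard criterion (for example Rockafellar, Corollary 9.1.3) then gives that the sum is closed. This is the step I expect to need the most care, and I would spell it out explicitly rather than rely only on the cited corollary.
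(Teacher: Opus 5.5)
Your proposal is correct and follows the paper's proof. For (b)$\Rightarrow$(a) you use Proposition \ref{LichnerowiczAsTrace} with Lemma \ref{EigenvalueLemma}(a), or equivalently the dual-cone pairing after shifting by $\kappa\,\id$; for (a)$\Rightarrow$(b) you reduce to $\kappa=0$ and decompose $\mathcal{A}_\omega\in S_k^*+\widehat{\Lambda^4V^*}$ via Proposition \ref{DualCones}. Your extra checks are valid and make explicit what the paper uses tacitly: that $\hat\eta$ is trace free, that $\id\in\Sym^2_B(\Lambda^2V)$, and that $S_k^*+\widehat{\Lambda^4V^*}$ is closed because a positive semidefinite operator with trace zero vanishes. (In the paper, the closure-free form of the cited Rockafellar corollary holds because $\id$ lies both in the interior of $S_k$ and in the subspace $\Sym^2_B(\Lambda^2V)$.)
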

\begin{proof}
(b) implies (a). Combining Proposition \ref{LichnerowiczAsTrace} and Lemma \ref{EigenvalueLemma} (a) shows that $g(\Ric_L(\omega), \omega) = \tr( \mathcal{R}(\mathcal{A}_{\omega} + \hat{\eta})) \geq \kappa \tr( \mathcal{A}_{\omega} + \hat{\eta})= \kappa \tr( \mathcal{A}_{\omega}) = \kappa p(n-p) |\omega|^2.$ 

(a) implies (b). Note that the operator $\mathcal{R} - \kappa \id$ is $k$-nonnegative and $\tr( (\mathcal{R}- \kappa \id) \mathcal{A}_{\omega}) = \tr( \mathcal{R} \mathcal{A}_\omega) - \kappa p(n-p)|\omega|^2.$ Due to Proposition \ref{LichnerowiczAsTrace}, we may hence assume $\kappa = 0.$

For $\kappa =0,$ Proposition \ref{LichnerowiczAsTrace} shows that $\tr(\mathcal{R} \mathcal{A}_{\omega}) \geq 0$ for all $k$-nonnegative algebraic curvature operators $\mathcal{R}$. Thus,
\begin{align*}
    \mathcal{A}_{\omega} \in \left( S_k \cap \Sym_B^2( \Lambda^2V) \right)^* = S_k^* + \widehat{\Lambda^4V^*}
\end{align*}
due to Proposition \ref{DualCones} (b).

In particular, there are $A \in S_k^*$ and $\eta \in \Lambda^4V^*$ such that $A = \mathcal{A}_{\omega}+\hat{\eta}.$ Proposition \ref{DualCones} (a) applied to $A$ shows that 
\begin{align*}
    0 \leq \mathcal{A}_{\omega}+\hat{\eta} \leq \frac{\tr(\mathcal{A}_{\omega}+\hat{\eta})}{k}\id_{\Lambda^2V}= \frac{\tr(\mathcal{A}_{\omega})}{k}\id_{\Lambda^2V} = \frac{p(n-p)}{k}|\omega|^2\id_{\Lambda^2V}
\end{align*}
and hence the claim.
\end{proof}

\begin{remark} \label{Positivity}
\normalfont
    Under the assumption of Theorem \ref{SharpnessCriterion} (b), note that for a $k$-positive algebraic curvature operator the induced Lichnerowicz operator $\Ric_L$ is positive on $\Lambda^p V^{*}.$
\end{remark}

\begin{remark}\label{RemarkOnPWestimates}
    \normalfont 
    If $1 \leq p \leq n/2,$ then the estimate $0 \leq \mathcal{A}_{\omega}   \leq p  | \omega |^2 \id_{\Lambda^2 V}$ in \cite[Lemma 2.2]{PetersenWinkNewCurvatureConditionsBochner} shows that we may take $\eta =0$ to prove that $\Ric_L \geq 0$ on $\Lambda^p V^{*}$ provided $R$ is an $(n-p)$-nonnegative curvature operator.
\end{remark}

\begin{remark}
\normalfont
    If $\omega=e^1$ is a $1$-form, then $\mathcal{A}_{\omega}(e_1 \wedge e_j)=e_1 \wedge e_j$ for $j >1$ and $\mathcal{A}_{\omega} (e_i \wedge e_j)=0$ for $i,j>1.$ In particular, $0 \leq \mathcal{A}_{\omega} \leq |\omega|^2 \id_{\Lambda^2V}$. Moreover, $0 \leq  \mathcal{A}_{\omega} + \hat{\eta} \leq |\omega|^2 \id_{\Lambda^2V}$ implies $\hat{\eta}=0.$ In particular, the result in Remark \ref{RemarkOnPWestimates} is sharp for $p=1.$ In this case $\Ric_L$ is of course the Ricci endomorphism. Example \ref{ExampleSharpCandidate} shows that the estimate in Remark \ref{RemarkOnPWestimates} is also sharp for $p=2.$
\end{remark}

\section{The maximizer of the action estimate}

In order to improve upon the results in \cite{PetersenWinkNewCurvatureConditionsBochner}, see Remark \ref{RemarkOnPWestimates}, we note that the $SO(n)$-orbit of the examples given in \cite[Example 4.2]{PetersenWinkNewCurvatureConditionsBochner} is exactly the maximizer of the action estimate $|L\omega|^2 \leq \min \{ p, n-p \} |L|^2 |\omega|^2$ in \cite[Lemma 2.2]{PetersenWinkNewCurvatureConditionsBochner}.

\begin{proposition}
\label{CharacterizationMaximizer}
    Let $1 \leq p \leq \frac{n}{2},$ $0 \neq \omega \in \Lambda^p V^*$ and $0 \neq L \in \mathfrak{so}(V).$ Then 
    \begin{align*}
        | L \omega |^2 = p | L|^2 |\omega|^2
    \end{align*}
    if and only if there exists an orthonormal basis $e_1, \ldots, e_n$ for $V$ such that
    \begin{align*}
        L = \pm \frac{|L|}{\sqrt{p}} \sum_{j=1}^p e_{2j-1} \wedge e_{2j} \ \text{ and } \ \omega = \sqrt{2} |\omega| \operatorname{Re} \varphi^1 \wedge \ldots \wedge \varphi^p
    \end{align*}
    where $\varphi^j= \frac{1}{\sqrt{2}} \left( e^{2j-1}+\sqrt{-1}e^{2j} \right).$

    In this frame, if $L= \sum_{j=1}^p e_{2j-1} \wedge e_{2j},$ then
    \begin{align}
    \label{LActionOnMaximizer}
        L \operatorname{Re} \varphi^1 \wedge \ldots \wedge \varphi^p & = + p \operatorname{Im} \varphi^1 \wedge \ldots \wedge \varphi^p, \\
        L \operatorname{Im} \varphi^1 \wedge \ldots \wedge \varphi^p & = - p \operatorname{Re} \varphi^1 \wedge \ldots \wedge \varphi^p. \nonumber
    \end{align}
\end{proposition}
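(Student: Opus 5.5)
We first handle the verification direction and the identity \eqref{LActionOnMaximizer}, and then the characterization of equality. For the identity, note that $e_{2j-1}\wedge e_{2j}$ acts on $V^*\otimes\C$ as a rotation in the $(e^{2j-1},e^{2j})$-plane. With the convention $(x\wedge y)z=g(x,z)y-g(y,z)x$ and the dual action, $\varphi^j$ is an eigenvector with eigenvalue $\pm\sqrt{-1}$; fix the sign by a one-line check on $e^1$. Since $L=\sum_j e_{2j-1}\wedge e_{2j}$ acts as a derivation and each summand only affects the factor $\varphi^j$, we get $L(\varphi^1\wedge\cdots\wedge\varphi^p)=p\sqrt{-1}\,\varphi^1\wedge\cdots\wedge\varphi^p$. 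Taking real and imaginary parts gives \eqref{LActionOnMaximizer}. Next, $\varphi^1\wedge\cdots\wedge\varphi^p$ is a wedge of orthonormal complex covectors, so its real and imaginary parts are orthogonal with norm $1/\sqrt{2}$ each. Hence for $\omega=\sqrt{2}|\omega|\operatorname{Re}\varphi^1\wedge\cdots\wedge\varphi^p$ we obtain $|L\omega|^2=p^2|\omega|^2$, while $|L|^2=p$ for this $L$. This gives equality, and rescaling $L$ gives the general sufficiency.

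For necessity, we revisit the proof of the estimate $|L\omega|^2\le p|L|^2|\omega|^2$ from \cite[Lemma 2.2]{PetersenWinkNewCurvatureConditionsBochner}, or reprove it as follows. Bring $L$ to normal form $L=\sum_{j=1}^{m}\mu_j e_{2j-1}\wedge e_{2j}$ with $m\le n/2$ and $\mu_j\ge0$. Complexify, and use the basis $\varphi^j,\bar\varphi^j$ $(j\le m)$ together with the remaining $e^k$. In this basis $L$ is diagonal: a basis monomial $\theta$ has eigenvalue $\sqrt{-1}\sum_j \mu_j\epsilon_j(\theta)$, where $\epsilon_j\in\{1,-1,0\}$ according to whether $\theta$ contains $\varphi^j$ alone, $\bar\varphi^j$ alone, or neither or both. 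Writing $\omega=\sum c_\theta\theta$ gives $|L\omega|^2=\sum|c_\theta|^2(\sum_j\mu_j\epsilon_j)^2$. Each $\theta$ has at most $p$ nonzero $\epsilon_j$, so Cauchy--Schwarz gives $(\sum\mu_j\epsilon_j)^2\le p\sum_{j:\epsilon_j\ne0}\mu_j^2\le p\sum_j\mu_j^2=p|L|^2$.

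Equality then forces, for every $\theta$ with $c_\theta\neq0$, three conditions. First, exactly $p$ indices $j$ have $\epsilon_j\ne0$. Second, all $\mu_j$ with $j$ outside this index set $J(\theta)$ vanish. Third, the values $\mu_j\epsilon_j$ for $j\in J(\theta)$ are all equal. Since $\omega\ne0$, it follows that exactly $p$ of the $\mu_j$ are nonzero and they are all equal, say to $\mu$ on an index set $J$ of size $p$, so $L=\frac{|L|}{\sqrt p}\sum_{j\in J}e_{2j-1}\wedge e_{2j}$ after relabeling. Every $\theta$ in the support must contain, for each $j\in J$, exactly one of $\varphi^j,\bar\varphi^j$, with all signs equal. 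Since $\theta$ is a $p$-form, it has no further factors. Hence $\omega=a\,\varphi^J+b\,\bar\varphi^J$ with $\varphi^J=\varphi^1\wedge\cdots\wedge\varphi^p$ after relabeling. Because $\omega$ is real, $b=\bar a$, so $\omega=2\operatorname{Re}(a\varphi^J)$.

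The main remaining step is to absorb the phase of $a$. Writing $a=|a|e^{\sqrt{-1}t}$, we replace $\varphi^1$ by $e^{\sqrt{-1}t}\varphi^1$, which amounts to rotating the basis $e_1,e_2$ within its plane. This leaves $L$ unchanged, since $e_1\wedge e_2$ is invariant under rotations in that plane. The normalization $|\omega|=\sqrt2|a|$ then gives exactly the stated form. The sign $\pm$ in $L$ accounts for the case where the common value of the $\epsilon_j$ is $-1$, equivalently for the choice of orientation. The main care needed is bookkeeping of the sign and normalization conventions, which we fix consistently with the identification of $\Lambda^2V$ with $\mathfrak{so}(V)$ in the preliminaries.
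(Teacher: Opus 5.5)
Your proof is correct in structure and takes essentially the paper's route. You diagonalize $L$ and use the unitary basis built from $\varphi^j,\overline{\varphi^j}$ (plus the leftover $e^k$). You then apply Cauchy--Schwarz to the eigenvalues $\sum_j\mu_j\epsilon_j$ and extract the three equality conditions. Reality of $\omega$ gives $b=\bar a$, and you absorb the phase by a rotation. You rotate one plane by $t$, whereas the paper rotates all $p$ planes with $\sqrt{2}z=e^{ipt}|\omega|$; both work, and your handling of monomials containing both $\varphi^j$ and $\overline{\varphi^j}$ is cleaner than the paper's.

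There is, however, a sign slip in your verification of \eqref{LActionOnMaximizer}. With the paper's conventions, $(e_1\wedge e_2)e^1=e^2$ and $(e_1\wedge e_2)e^2=-e^1$. Hence $(e_1\wedge e_2)\varphi^1=-\sqrt{-1}\,\varphi^1$, and so $L(\varphi^1\wedge\cdots\wedge\varphi^p)=-p\sqrt{-1}\,\varphi^1\wedge\cdots\wedge\varphi^p$. With the value $+p\sqrt{-1}$ you wrote, taking real parts gives $L\operatorname{Re}\varphi^1\wedge\cdots\wedge\varphi^p=-p\operatorname{Im}\varphi^1\wedge\cdots\wedge\varphi^p$, the opposite of the claimed identity. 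You announced the one-line check on $e^1$ but did not carry it out. The slip does not affect $|L\omega|^2=p^2|\omega|^2$ or the necessity argument.

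Your closing remark on the $\pm$ is also slightly off, though harmless. Since $\omega$ is real, both $\varphi^J$ and $\overline{\varphi^J}$ lie in its support, so the common sign of the $\epsilon_j$ is not a separate case. With $\mu_j\ge 0$, the necessity direction always produces the $+$ sign. The $\pm$ matters only for the converse, which your rescaling by a possibly negative scalar already covers.
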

\begin{proof}
    Since $L$ is skew-symmetric, there is an orthonormal basis $e_1, \ldots, e_n$ for $V$ and $a_1, \ldots, a_{\floor{\frac{n}{2}}} \geq 0$ such that 
    \begin{align*}
        L = \sum_{j=1}^{\floor{\frac{n}{2}}} a_j e_{2j-1} \wedge e_{2j}. 
    \end{align*}
    We may assume that $|L|^2 = \sum_{j} a_j^2 =1.$
    
    Define $\varphi^j = \frac{1}{\sqrt{2}} \left( e^{2j-1} + \sqrt{-1} e^{2j} \right)$ and $\varphi^{IJ} = \varphi^{i_1} \wedge \ldots \wedge \varphi^{i_k} \wedge \overline{\varphi^{j_1}} \wedge \ldots \wedge \overline{\varphi^{j_{p-k}}}$ where $I=\{ i_1 < \ldots < i_k \},$ $J=\{ j_1 < \ldots < j_{p-k} \}.$ 
    Since $L \varphi^j = - \sqrt{-1} a_j \varphi^j$ and $L \overline{\varphi^j} = + \sqrt{-1} a_j \overline{\varphi^j},$ it follows that 
    \begin{align*}
        L \varphi^{IJ} = \sqrt{-1} \sum_{k=1}^n \varepsilon_{IJ}^k a_k \varphi^{IJ}
    \end{align*}
    where 
    \begin{align*}
        \varepsilon_{IJ}^k = \begin{cases}
            -1 & \ \text{ if } \ k \in I, \\
            +1 & \ \text{ if } \ k \in J, \\
            \hspace{2mm} 0 & \ \text{ if } \ k \notin I \cup J. \\
        \end{cases}
    \end{align*}
    
    Define $\mu_{IJ} = \sum_k \varepsilon_{IJ}^k a_k$ so that $L \varphi^{IJ} = \sqrt{-1} \mu_{IJ} \varphi^{IJ}.$ Then \cite[Lemma 2.2]{PetersenWinkNewCurvatureConditionsBochner} can be rephrased as 
    \begin{align}
    \label{EqualityCase1}
        |\mu_{IJ}|^2 = | L \varphi^{IJ} |^2 = \left| \sum_{k=1}^n \varepsilon_{IJ}^{k} \sum_{\varepsilon^k_{IJ} \neq 0} a_k \right|^2
        \leq \sum_{k=1}^n \left( \varepsilon_{IJ}^k \right)^2 \sum_{\varepsilon_{IJ}^k \neq 0} a_k^2 \leq p \sum_{k=1}^{\floor{\frac{n}{2}}} a_k^2 = p.
    \end{align}
    Hence, $\omega = \sum_{I,J} \omega_{IJ} \varphi^{IJ} \in \Lambda^p ( V^* \otimes \C)$ satisfies
    $L\omega = \sum_{I,J} \omega_{IJ} \mu_{IJ} \varphi^{IJ}$ and 
    \begin{align*}
        |L\omega|^2 = \sum_{I,J} | \omega_{IJ}|^2 |\mu_{IJ}|^2 \leq p |L|^2 \sum_{I,J} | \omega_{IJ}|^2 = p |\omega|^2.
    \end{align*}
    Equality forces $0 \leq \sum_{I,J} | \omega_{IJ}|^2 \left(p - | \mu_{IJ} |^2  \right) = 0$ and hence either $\omega_{IJ}=0$ or $p = | \mu_{IJ} |^2.$ Therefore, the equality case in \eqref{EqualityCase1} shows that 
    \begin{enumerate}
        \item If $p = | \mu_{IJ}|^2,$ then $p=\sum_{k} \left( \varepsilon_{IJ}^k \right)^2.$ Consequently, exactly $p$ many $\varepsilon_{IJ}^k$ are nonzero.
        \item If $\varepsilon_{IJ}^k=0,$ then $a_k=0.$
        \item We have $(\varepsilon_{IJ}^1, \ldots, \varepsilon_{IJ}^{\floor{\frac{n}{2}}}) = c \cdot (a_1, \ldots, a_{\floor{\frac{n}{2}}}),$ according to the equality discussion in Cauchy-Schwarz. Since $\omega \neq 0$ we have $c \neq 0$ and as $a_k \geq 0,$ all nonzero $\varepsilon_{IJ}^k$ have the same sign.
    \end{enumerate}
    Therefore, we may assume that $a_j \neq 0$ if and only if $1 \leq j \leq p$ and then $|L|^2=1$ shows $a_1=\ldots=a_p= \frac{1}{\sqrt{p}}$ and $a_{p+1}= \ldots = a_{\floor{\frac{n}{2}}}=0.$ This yields the normal form for $L.$

    Moreover, this implies that $\omega = z  \varphi^1 \wedge \ldots \wedge \varphi^p + w \overline{\varphi^1} \wedge \ldots \wedge \overline{\varphi^p}$ and as $\omega$ is a real form, we have $w=\bar{z}.$ Define $t \in \R$ by $\sqrt{2}z=e^{ipt} |\omega|$ and the rotated orthonormal frame $f_1, \ldots, f_n$ by
    \begin{align*}
        f_{2j-1} & = \cos (t) e_{2j-1} - \sin (t) e_{2j} \ \text{ for } \ j=1, \ldots, p, \\
        f_{2j} & = \sin (t) e_{2j-1} + \cos (t) e_{2j} \ \text{ for } \ j=1, \ldots, p , \\
        f_{j} & = e_j \hspace{34.6mm} \text{ for } \ j=2p+1, \ldots, n.
    \end{align*}
    It follows that $L$ and $\omega$ are in the claimed normal form as $f_{2j-1} \wedge f_{2j} = e_{2j-1} \wedge e_{2j}$ and $\frac{1}{\sqrt{2}} \left( f^{2j-1} + \sqrt{-1} f^{2j} \right) = e^{it} \varphi^j.$
\end{proof}

The next step is to obtain a description for the action operator $\mathcal{A}_{\omega}$ for maximizers $\omega \in \Lambda^pV^*$ of $|L\omega|^2 \leq p |L|^2 |\omega|^2$.

\begin{proposition}
\label{ActionOnMaximizer}
    Let $3 \leq p \leq \frac{n}{2}$ and suppose that $\omega \in \Lambda^pV^* \setminus \{ 0 \}$ satisfies $|L \omega|^2=p |L|^2 |\omega|^2$ for some $0 \neq L \in \mathfrak{so}(V).$ Then there exists an orthogonal decomposition $V=E \oplus F$ with $\dim E =2p$ and an almost complex structure $J$ on $E$ such that 
    \begin{align*}
        \Lambda^2V = \R \Omega \oplus \Lambda_{0,\R}^{1,1}E \oplus \left( \Lambda^{2,0}E \oplus \Lambda^{0,2}E\right)_{\R} \oplus E \wedge F \oplus \Lambda^2F,
    \end{align*}
    where $\Omega$ is the bi-vector associated to the K\"ahler form on $E,$ is an orthogonal decomposition into eigenspaces of $\mathcal{A}_{\omega}.$ Moreover, 
    \begin{align*}
        \mathcal{A}_{\omega}(\Omega) & = p |\omega|^2 \Omega, \\
        \mathcal{A}_{\omega} & = 0  \hspace{12.2mm} \text{ on } \ \Lambda^{1,1}_{0, \R} E, \\
        \mathcal{A}_{\omega} & = |\omega|^2 \id \hspace{3.2mm} \text{ on } \ \left( \Lambda^{2,0}E \oplus \Lambda^{0,2}E \right)_{\R}, \\
        \mathcal{A}_{\omega} & = \frac{1}{2} |\omega|^2 \id \hspace{0.6mm} \text{ on } \ E \wedge F, \\
        \mathcal{A}_{\omega} & = 0  \hspace{12.3mm} \text{ on } \ \Lambda^2 F.
    \end{align*}
\end{proposition}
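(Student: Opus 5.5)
By Proposition \ref{CharacterizationMaximizer} there is an orthonormal basis $e_1,\ldots,e_n$ with
\begin{align*}
\omega=\sqrt{2}|\omega|\operatorname{Re}\,\varphi^1\wedge\ldots\wedge\varphi^p .
\end{align*}
Set $E=\operatorname{span}\{e_1,\ldots,e_{2p}\}$ and $F=E^\perp$, and let $J$ be given by $Je_{2j-1}=e_{2j}$, so that $\varphi^j$ are $(1,0)$-forms. By scaling, assume $|\omega|^2=1$. Write $\Phi=\varphi^1\wedge\ldots\wedge\varphi^p$, a complex volume form on $E$, so that $\omega=\frac{1}{\sqrt2}(\Phi+\overline{\Phi})$. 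The plan is to compute $\alpha\omega$ for $\alpha$ in each summand of the decomposition. Then I will check that the images of different summands are mutually orthogonal, and read off the eigenvalues. Since $\mathcal{A}_\omega=\mathcal{D}_\omega^*\mathcal{D}_\omega$, it suffices to show that $\mathcal{D}_\omega$ maps each summand to pairwise orthogonal subspaces of $\Lambda^pV^*$ and restricts to a conformal map with the stated factor on each.

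The computation proceeds by complexifying and using the $\mathfrak{u}(p)\oplus\mathfrak{so}(F)$-type decomposition of $\Lambda^2V\otimes\C\cong\mathfrak{so}(V)\otimes\C$.

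\emph{$\Lambda^2F$.} It acts trivially on $\Phi$, since $\Phi$ involves only $E$-covectors. Hence $\mathcal{A}_\omega=0$ there.

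\emph{$\Lambda^{1,1}E=\mathfrak{u}(p)$.} An element $\xi$ acts on the $(p,0)$-form $\Phi$ by $-\tr_{\C}(\xi)$ times $\Phi$ (up to a factor of $\sqrt{-1}$), that is, through the determinant character. So the traceless part $\Lambda^{1,1}_{0,\R}E$ kills $\Phi$ and $\overline\Phi$, giving eigenvalue $0$. For $\Omega=\sum_j e_{2j-1}\wedge e_{2j}$, formula \eqref{LActionOnMaximizer} gives $\Omega\omega=\pm p\sqrt2\operatorname{Im}\Phi$. With $|\Omega|^2=p$, this yields $|\Omega\omega|^2=p^2=p|\Omega|^2$, so the eigenvalue is $p$.

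\emph{$(\Lambda^{2,0}\oplus\Lambda^{0,2})_\R$.} An element of $\Lambda^{0,2}$, for instance $\overline{\varphi_a}\wedge\overline{\varphi_b}$ viewed as an endomorphism, maps $(1,0)$-covectors to $(0,1)$-covectors. Hence it sends $\Phi$ to a $(p-2,2)$-form, namely $\pm\varphi^{\{1..p\}\setminus\{a,b\}}\wedge\overline{\varphi^a}\wedge\overline{\varphi^b}$ up to a constant. It kills $\overline\Phi$, since $\overline\Phi$ is already of top $(0,p)$ type. Different pairs $\{a,b\}$ give orthogonal images, and the images lie in type $(p-2,2)\oplus(2,p-2)$. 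These types are distinct from $(p,0)\oplus(0,p)$ precisely because $p\ge3$ (for $p=2$, type $(0,2)$ collides). This is the key place the hypothesis $p\geq3$ enters. A direct check on the basis element $e_1\wedge e_3-e_2\wedge e_4$, which has norm $\sqrt2$ and whose action on $\omega$ can be computed explicitly, gives the norm factor $1$. Thus $\mathcal{A}_\omega=\id$ on this summand.

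\emph{$E\wedge F$.} For $v\in E$ and $f\in F$, the element $v\wedge f$ replaces one $E$-covector by $f^*$. So $(v\wedge f)\omega$ lies in $\Lambda^{p-1}E^*\otimes F^*$, which is orthogonal to all the previous images since those lie in $\Lambda^pE^*$. For a unit vector $v$, the contraction $\iota_v$ satisfies $|\iota_v\omega|^2=\tfrac12|\omega|^2$: the form $\Phi$ only sees the $(1,0)$-part of $v$, which has norm $1/\sqrt2$, and the contributions of $\Phi$ and $\overline\Phi$ are orthogonal. Combining this with $(v\wedge f)\omega = \pm f^*\wedge\iota_v\omega$, and checking that distinct pairs $(e_i,f_k)$ give orthogonal images, yields $\mathcal{A}_\omega=\frac12\id$ there.

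As a consistency check, the trace is $p+0+p(p-1)+\tfrac12\cdot2p(n-2p)=p(n-p)$, which agrees with $\tr\mathcal{A}_\omega=p(n-p)|\omega|^2$.

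I expect the main obstacle to be the cross-orthogonality bookkeeping, in particular showing that $\Omega\omega$, which lies in type $(p,0)\oplus(0,p)$, is orthogonal to the images of $\Lambda^{2,0}\oplus\Lambda^{0,2}$. I would handle this via the type decomposition. The exact normalisation constants in the $(2,0)$ computation and in the $E\wedge F$ computation will be fixed by the explicit basis checks described above, using the conventions of the preliminaries section.
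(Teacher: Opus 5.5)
\textbf{Verdict: genuine gap.} Your treatment of $\Lambda^2F$, of $\Lambda^{1,1}_{0,\R}E$ via the determinant character, and of $E\wedge F$ via $(v\wedge f)\omega=\pm f^\flat\wedge\iota_v\omega$ is sound. So is the reduction to ``$\mathcal{D}_\omega$ has pairwise orthogonal images and is conformal on each summand''. This is close to the paper, which gets the eigenvector $\Omega$ from the equality case of $\mathcal{A}_\omega\le p|\omega|^2\id$ instead. The trouble is the summand $(\Lambda^{2,0}E\oplus\Lambda^{0,2}E)_\R$.

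First, your type computation is wrong. The action on forms is a derivation, so it replaces one factor at a time. An element sending $(1,0)$- to $(0,1)$-covectors therefore maps $\Phi$ into $\Lambda^{p-1,1}$, not $\Lambda^{p-2,2}$. For $\alpha=\frac{1}{\sqrt2}(e_1\wedge e_3-e_2\wedge e_4)$ one has $\alpha\varphi^1=\frac{1}{\sqrt2}\overline{\varphi^2}$ and $\alpha\varphi^2=-\frac{1}{\sqrt2}\overline{\varphi^1}$, hence $\alpha\Phi=\frac{1}{\sqrt2}(\overline{\varphi^2}\wedge\varphi^2-\varphi^1\wedge\overline{\varphi^1})\wedge\varphi^3\wedge\ldots\wedge\varphi^p$. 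Since $\Lambda^{p-1,1}\oplus\Lambda^{1,p-1}$ is already orthogonal to $\Lambda^{p,0}\oplus\Lambda^{0,p}$ for $p=2$, the place you single out as ``where $p\ge3$ enters'' is not where it enters.

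The step that is actually missing is conformality on this summand. One basis check plus orthogonality across different pairs $\{a,b\}$ does not suffice. For each pair, the corresponding part of the summand is two-dimensional, spanned by $(R_{ab})^\perp=\frac{1}{\sqrt2}(e_{2a-1}\wedge e_{2b-1}-e_{2a}\wedge e_{2b})$ and $(I_{ab})^\perp=\frac{1}{\sqrt2}(e_{2a-1}\wedge e_{2b}-e_{2b-1}\wedge e_{2a})$. You need their images to be orthogonal and of equal norm.

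A real $\alpha$ does not kill $\overline\Phi$. Write $\alpha=\beta+\bar\beta$, with $\beta$ the component sending $(1,0)$- to $(0,1)$-covectors; then $\alpha\omega=\frac{1}{\sqrt2}(\beta\Phi+\overline{\beta\Phi})$. Both required facts, and also $|\alpha\omega|^2=|\beta\Phi|^2$, need the cross term $g(\beta\Phi,\overline{\beta\Phi})$ to vanish, i.e.\ $\Lambda^{p-1,1}\perp\Lambda^{1,p-1}$. This holds exactly when $p\ge3$, and your outline never uses it.

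The case $p=2$ shows the step is essential. There every cross-summand orthogonality you list still holds. Yet $\omega=\frac{1}{\sqrt2}(e^{13}-e^{24})$ is metrically dual to $(R_{12})^\perp$, so $(R_{12})^\perp\omega=0$, while $\mathcal{A}_\omega((I_{12})^\perp)=2(I_{12})^\perp$.

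\textbf{Repair.} For $p\ge3$ you have $|\alpha\omega|^2=|\beta\Phi|^2$. The map $\beta\mapsto\beta\Phi$ is complex linear and sends the standard basis $\beta_{ab}$ to mutually orthogonal vectors of equal norm, since these are built from the distinct monomials $\overline{\varphi^b}\wedge\bigwedge_{k\ne a}\varphi^k$ and $\overline{\varphi^a}\wedge\bigwedge_{k\ne b}\varphi^k$. Polarization then gives conformality on the whole summand, and your single basis check fixes the constant. Alternatively, compute directly on the basis $(R_{ab})^\perp,(I_{ab})^\perp$, as the paper does.
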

\begin{proof}
    We may assume $|\omega|=1.$ By Proposition \ref{CharacterizationMaximizer}, there exists an orthonormal basis $e_1, \ldots, e_{n}$ for $V$ such that 
    \begin{align*}
        \omega = \sqrt{2} \operatorname{Re}   \varphi^1 \wedge \ldots \wedge \varphi^p,
    \end{align*}
    where $\varphi^j= \frac{1}{\sqrt{2}} \left( e^{2j-1}+\sqrt{-1}e^{2j} \right).$
    
    Set $E=\operatorname{span} \{ e_1, \ldots, e_{2p} \}$ and $F = \operatorname{span} \{ e_{2p+1}, \ldots, e_{n} \}.$ Then $E$ has the almost complex structure $J$ given by $J(e_{2j-1})=e_{2j}$, $J(e_{2j})=-e_{2j-1}$ and $\Omega = \frac{1}{\sqrt{p}} \sum_{j=1}^p e_{2j-1} \wedge e_{2j}$ is the bi-vector associated to the K\"ahler form.   

    (a) Note that \eqref{LActionOnMaximizer} implies $g( \mathcal{A}_{\omega}\Omega, \Omega) = p,$ which is the equality case in $\mathcal{A}_{\omega} \leq p \id_{\Lambda^2V}$ and hence $\Omega$ is an eigenvector of $\mathcal{A}_{\omega}.$ 

    (b) As $\Lambda_{0,\R}^{1,1}E \cong \mathfrak{su}(E)$ and $\varphi^1 \wedge \ldots \wedge \varphi^p$ is the complex volume form on $E,$ which is fixed by $SU(E),$ it follows that $(L)\varphi^1 \wedge \ldots \wedge \varphi^p = 0$ and hence $L \omega=0$ for all $L \in \Lambda_{0,\R}^{1,1}E.$ 

    (c) As in \cite[Section 1.2]{PetersenWinkHodgeNumbers}, an orthonormal basis for $\left( \Lambda^{2,0}E \oplus \Lambda^{0,2}E \right)_{\R} = \mathfrak{u}(E)^\perp \subseteq \mathfrak{so}(E)$ is given by
    \begin{align}
    \label{BasisUnPerp}
        (R_{ij})^{\perp} & = \frac{1}{\sqrt{2}} \left( e_{2i-1} \wedge e_{2j-1} - e_{2i} \wedge e_{2j} \right), \\ 
        (I_{ij})^{\perp} & = \frac{1}{\sqrt{2}} \left( e_{2i-1} \wedge e_{2j} - e_{2j-1} \wedge e_{2i} \right). \notag
    \end{align}
    It is straightforward to compute that
    \begin{align*}
        (R_{ij})^{\perp} \varphi^k & =   \frac{1}{\sqrt{2}} \left( \delta_{ik} \overline{\varphi^j} - \delta_{jk} \overline{\varphi^i} \right), \ \
        (I_{ij})^{\perp} \varphi^k = \frac{\sqrt{-1}}{\sqrt{2}} \left( \delta_{ik} \overline{\varphi^j} - \delta_{jk} \overline{\varphi^i} \right)
    \end{align*}
    and in particular $(I_{ij})^{\perp} \varphi^k = \sqrt{-1} (R_{ij})^{\perp} \varphi^k.$ It follows that 
    \begin{align*}
        (R_{ij})^{\perp} \varphi^1 \wedge \ldots \wedge \varphi^p & = \frac{1}{\sqrt{2}} \left( \varphi^1 \wedge \ldots \wedge \underset{\text{$i$-th slot}}{\overline{\varphi^j}} \wedge \ldots \wedge \varphi^p \right. \\
        &\hspace{20mm} \left. - \varphi^1 \wedge \ldots \wedge \underset{\text{$j$-th slot}}{\overline{\varphi^i}} \wedge  \ldots \wedge \varphi^p \right).
    \end{align*}
    Set $\varphi = \varphi^1 \wedge \ldots \wedge \varphi^p.$ Note that $(R_{ij})^{\perp} \varphi \in \Lambda^{p-1,1}V$ and $(R_{ij})^{\perp} \overline{\varphi} \in \Lambda^{1,p-1}V$ are mutually orthogonal with unit norm. As $p \geq 3,$ the forms are in different spaces and with $\omega = \frac{1}{\sqrt{2}} \left( \varphi + \overline{\varphi} \right)$ it follows that 
    \begin{align*}
         |(R_{ij})^{\perp} \omega|^2 = \frac{1}{2}\left( |(R_{ij})^{\perp}\varphi|^2 + |(R_{ij})^{\perp}\overline{\varphi} |^2 \right)  =1.
    \end{align*}     
    With the same argument, one observes that $|(I_{ij})^{\perp} \omega|^2 =1$ and 
    \begin{align*}
        g( (R_{ij})^{\perp} \omega, (R_{kl})^{\perp} \omega) = g( (I_{ij})^{\perp} \omega, (I_{kl})^{\perp} \omega) = g( (R_{ij})^{\perp} \omega, (I_{kl})^{\perp} \omega)= 0
    \end{align*}
    for $\{ i,j \} \neq \{k,l \}.$ Together with
    \begin{align*}
        2 g( (R_{ij})^{\perp} \omega, (I_{ij})^{\perp} \omega) = \sqrt{-1} g( (R_{ij})^{\perp} \varphi + (R_{ij})^{\perp} \overline{\varphi}, (R_{ij})^{\perp} \varphi - (R_{ij})^{\perp} \overline{\varphi})=0
    \end{align*}
    this shows that $\mathcal{A}_{\omega}= \id$ on $\left( \Lambda^{2,0}E \oplus \Lambda^{0,2}E \right)_{\R}.$

    (d) For $x,z \in E$ and $y,w \in F$ we have $(x \wedge y) \omega = y^{\flat} \wedge \iota_{x}\omega$ and thus 
    \begin{align*}
        g( \mathcal{A}_{\omega}(x \wedge y), z \wedge w) = g(y,w) g(\iota_x \omega, \iota_z \omega) = \frac{1}{2} g(x,z) g(y,w) 
    \end{align*}
    by direct evaluation on basis vectors. Hence, $(\mathcal{A}_{\omega})_{|E \wedge F}= \frac{1}{2} \id_{E \wedge F}.$

    (e) $\mathcal{A}_{\omega}=0$ on $\Lambda^2F$ is immediate as $(x \wedge y) \omega=0$ for all $x,y \in F.$
\end{proof}

\begin{lemma}
\label{Correction4Form}
    Let $p \leq \frac{n}{2}$ and let $e_1, \ldots, e_n$ be an orthonormal basis for $V.$ Let $E=\operatorname{span} \{ e_1, \ldots, e_{2p} \},$ $F = \operatorname{span} \{ e_{2p+1}, \ldots, e_{n} \}.$ 
    
    If $\Omega = \frac{1}{\sqrt{p}} \sum_{j=1}^p e_{2j-1} \wedge e_{2j},$ then $\eta = \frac{p}{2} \Omega^{\flat} \wedge \Omega^{\flat}$ satisfies
    \begin{align*}
        \hat{\eta}(\Omega) & = (p-1) \Omega, \\
        \hat{\eta} & = - \id \hspace{11mm} \text{ on } \ \Lambda^{1,1}_{0} E, \\
        \hat{\eta} & = \id \hspace{14.6mm}  \text{ on } \ \left( \Lambda^{2,0}E \oplus \Lambda^{0,2}E \right)_{\R}, \\
        \hat{\eta} & = 0  \hspace{16.6mm} \text{ on } \ E \wedge F, \\
        \hat{\eta} & = 0  \hspace{16.6mm} \text{ on } \ \Lambda^2 F.
    \end{align*}
\end{lemma}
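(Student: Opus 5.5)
Since $\hat\eta$ is linear and the listed spaces span $\Lambda^2V$, I would verify the statement by evaluating $\hat{\eta}$ on an explicit basis adapted to the decomposition of Proposition \ref{ActionOnMaximizer}. First I simplify $\eta$. Write $\omega_j = e^{2j-1}\wedge e^{2j}$, so that $\Omega^\flat = \frac{1}{\sqrt{p}}\sum_j \omega_j$. Since the $\omega_j$ commute and $\omega_j\wedge\omega_j=0$,
\begin{align*}
\eta = \frac{p}{2}\cdot\frac{1}{p}\Big(\sum_j \omega_j\Big)^{2} = \sum_{i<j} e^{2i-1}\wedge e^{2i}\wedge e^{2j-1}\wedge e^{2j}.
\end{align*}
From $g(\hat\eta(x\wedge y),z\wedge w)=\eta(x,y,z,w)$, the only nonzero pairings are among basis bivectors whose four indices form a set $\{2i-1,2i,2j-1,2j\}$ with $i \neq j$. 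This is the whole computation in outline.

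Next I evaluate $\hat\eta$ on the basis bivectors of $E$, for $i\neq j$; all signs come from the sign of the permutation relative to $(2i-1,2i,2j-1,2j)$.
\begin{align*}
\hat\eta(e_{2i-1}\wedge e_{2i}) &= \sum_{j\neq i} e_{2j-1}\wedge e_{2j},\\
\hat\eta(e_{2i-1}\wedge e_{2j-1}) &= -e_{2i}\wedge e_{2j}, \qquad \hat\eta(e_{2i}\wedge e_{2j}) = -e_{2i-1}\wedge e_{2j-1},\\
\hat\eta(e_{2i-1}\wedge e_{2j}) &= -e_{2j-1}\wedge e_{2i}, \qquad \hat\eta(e_{2j-1}\wedge e_{2i}) = -e_{2i-1}\wedge e_{2j}.
\end{align*}
The third line uses that $(2i-1,2j,2i,2j-1)$ is a $3$-cycle of $(2i-1,2i,2j-1,2j)$, hence has sign $+1$. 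I would double-check these signs carefully, since they are the only real source of error.

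From the first line, $\hat\eta(\Omega)=(p-1)\Omega$. For the diagonal part of $\Lambda^{1,1}_0E$, take $\sum_i c_i e_{2i-1}\wedge e_{2i}$ with $\sum_i c_i=0$. Its image has $j$-th coefficient $\sum_{i\neq j}c_i=-c_j$, so the eigenvalue is $-1$. The off-diagonal part of $\Lambda^{1,1}_0E$ is spanned by $e_{2i-1}\wedge e_{2j-1}+e_{2i}\wedge e_{2j}$ and $e_{2i-1}\wedge e_{2j}+e_{2j-1}\wedge e_{2i}$; by the formulas above both map to their negatives. The basis elements $(R_{ij})^\perp$ and $(I_{ij})^\perp$ from \eqref{BasisUnPerp} are the corresponding differences, and they are fixed, giving eigenvalue $+1$ on $(\Lambda^{2,0}E\oplus\Lambda^{0,2}E)_\R$.

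Finally, $\eta$ involves only covectors dual to $E$. Hence $\eta(x,y,z,w)=0$ whenever one argument lies in $F$, which gives $\hat\eta=0$ on $E\wedge F$ and on $\Lambda^2F$.
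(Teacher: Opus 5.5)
Your proposal is correct and follows essentially the paper's proof. Both expand $\eta=\sum_{i<j}e^{2i-1}\wedge e^{2i}\wedge e^{2j-1}\wedge e^{2j}$, evaluate $\hat\eta$ directly on bivectors, and observe that $\hat\eta$ vanishes on $E\wedge F\oplus\Lambda^2F$ because $\eta$ only involves covectors on $E$. The one small difference is on $\Lambda^{1,1}_0E$: the paper uses a unitary change of frame, which fixes $\Omega$ and hence $\eta$, to reduce to traceless diagonal elements, whereas you check the off-diagonal $\mathfrak{su}(E)$ basis elements explicitly, and your signs there are correct.
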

\begin{proof}
    Note that $\eta = \sum_{i < j} e^{2i-1} \wedge e^{2i} \wedge e^{2j-1} \wedge e^{2j}$ and recall that $\hat\eta$ is defined by $g(\hat{\eta}(x \wedge y), z \wedge w) = \eta( x,y,z,w).$ Therefore, direct evaluation yields
    \begin{align*}
        \hat{\eta}( e_{2k-1} \wedge e_{2k} ) = \sum_{l: l \neq k} e_{2l-1} \wedge e_{2l} = \sqrt{p} \Omega - e_{2k-1} \wedge e_{2k}.
    \end{align*}
    In particular, $\hat{\eta}(\Omega) = (p-1) \Omega.$ 
    
    As $\Lambda^{1,1}_{0,\R}E$ consists of all $(1,1)$-forms on $E$ orthogonal to $\Omega,$ any $\alpha \in \Lambda^{1,1}_{0,\R}E$ is given by $\alpha = \sum_k a_k e^{2k-1} \wedge e^{2k}$ with $\sum_k a_k=0$ (after possibly changing the frame with a unitary transformation which automatically fixes $\Omega$) and hence
    \begin{align*}
        \hat{\eta}(\alpha) = \sum_k a_k \left( \sqrt{p} \Omega - e_{2k-1} \wedge e_{2k} \right) = - \alpha.
    \end{align*}

    Evaluation on the orthonormal basis for $\left( \Lambda^{2,0}E \oplus \Lambda^{0,2}E \right)_{\R}$ defined in \eqref{BasisUnPerp} proves $\hat{\eta} = \id$ on $\left( \Lambda^{2,0}E \oplus \Lambda^{0,2}E \right)_{\R}.$

    As $\iota_v \Omega^{\flat} = 0$ for every $v \in F,$ it follows that $\hat{\eta} = 0$ on $E \wedge F \oplus \Lambda^2F.$
\end{proof}

\begin{corollary}
\label{OptimalCorrectionForMaximizers}
    Let $3 \leq p \leq \frac{n}{2}.$ If $\omega \in \Lambda^pV^*$ satisfies $|L \omega|^2=p |L|^2 |\omega|^2$ for some $0 \neq L \in \mathfrak{so}(V),$ then there exists $\eta \in \Lambda^4V^*$ such that
    \begin{align*}
        0 \leq \mathcal{A}_{\omega} + \hat{\eta} \leq | \omega |^2 \id_{\Lambda^2V}.
    \end{align*}
\end{corollary}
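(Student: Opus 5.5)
We combine Proposition \ref{ActionOnMaximizer} with Lemma \ref{Correction4Form}, choosing the frame so that the two decompositions coincide. By homogeneity we may assume $|\omega|=1$. Proposition \ref{ActionOnMaximizer} (via the frame of Proposition \ref{CharacterizationMaximizer}) gives an orthonormal basis $e_1,\ldots,e_n$ with $\omega=\sqrt2\operatorname{Re}\varphi^1\wedge\ldots\wedge\varphi^p$. It also gives the splitting $V=E\oplus F$, $E=\operatorname{span}\{e_1,\ldots,e_{2p}\}$, and the orthogonal decomposition
\begin{align*}
\Lambda^2V=\R\Omega\oplus\Lambda^{1,1}_{0,\R}E\oplus\left(\Lambda^{2,0}E\oplus\Lambda^{0,2}E\right)_{\R}\oplus E\wedge F\oplus\Lambda^2F
\end{align*}
into eigenspaces of $\mathcal{A}_\omega$, with eigenvalues $p,0,1,\frac12,0$. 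Lemma \ref{Correction4Form} is stated for exactly this frame, with the same $E$, $F$ and the same $\Omega=\frac{1}{\sqrt p}\sum_j e_{2j-1}\wedge e_{2j}$. Its $4$-form $\eta_0=\frac p2\Omega^\flat\wedge\Omega^\flat$ is therefore diagonal in the same decomposition, with eigenvalues $p-1,-1,1,0,0$.

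Take $\eta=-\eta_0$. Since both operators act as scalars on each summand, $\mathcal{A}_\omega+\hat\eta$ is diagonal in the decomposition with eigenvalues
\begin{align*}
p-(p-1)=1,\quad 0+1=1,\quad 1-1=0,\quad \tfrac12,\quad 0.
\end{align*}
All of these lie in $[0,1]$, which gives $0\le\mathcal{A}_\omega+\hat\eta\le|\omega|^2\id_{\Lambda^2V}$ after rescaling. As a sanity check, the trace is $1+(p^2-1)+\frac p2(n-2p)=p(n-p)-p(p-1)$, while $\tr\hat\eta$ should be $0$. This discrepancy means I have not yet identified the $(2,0)$ part correctly. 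The dimension of $\left(\Lambda^{2,0}E\oplus\Lambda^{0,2}E\right)_{\R}$ is $p(p-1)$, and $\hat\eta_0$ has trace $(p-1)-(p^2-1)+p(p-1)=0$, which is consistent. So the trace of $\mathcal{A}_\omega+\hat\eta$ is $p(n-p)$ as it must be, once I recount: $1+(p^2-1)+0+\frac12\cdot 2p(n-2p)=p(n-p)$. The bookkeeping checks out.

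The only delicate point is making sure the two decompositions really are simultaneous eigenspace decompositions, which requires identical frames. In particular, the unitary change of frame used in Lemma \ref{Correction4Form} for elements of $\Lambda^{1,1}_{0,\R}E$ fixes $\Omega$ and $\eta_0$, so this is harmless. The sign choice $\eta=-\eta_0$ is the key step, and $p\ge3$ enters only through Proposition \ref{ActionOnMaximizer}.
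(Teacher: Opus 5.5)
Your argument is correct and is exactly the paper's: in the frame of Proposition~\ref{ActionOnMaximizer} you take $\eta=-\frac{p}{2}|\omega|^2\,\Omega^{\flat}\wedge\Omega^{\flat}$, and Lemma~\ref{Correction4Form} then gives the eigenvalues $1,1,0,\frac12,0$ (in units of $|\omega|^2$) of $\mathcal{A}_\omega+\hat\eta$ on the five summands. The only blemish is your first trace check, which counts the $E\wedge F$ contribution as $\frac p2(n-2p)$ instead of $\frac12\cdot 2p(n-2p)$ and wrongly blames the $(2,0)$ part; delete it and keep the corrected count $1+(p^2-1)+p(n-2p)=p(n-p)$.
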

\begin{proof}
    Define $\Omega$ as in Proposition \ref{ActionOnMaximizer} with $|\Omega|=1$. According to Lemma \ref{Correction4Form}, $\eta = -\frac{p}{2} \Omega^{\flat} \wedge \Omega^{\flat}$ works. 
\end{proof}

\begin{remark}
    \normalfont
    For $p=1,2,$ if $\omega \in \Lambda^pV^*$ satisfies $|L\omega|^2=p|L|^2 |\omega|^2$ for some $0 \neq L \in \mathfrak{so}(V),$ then $\mathcal{A}_{\omega} + \hat{\eta} \geq 0$ for some $\eta \in \Lambda^4V^*$ implies $\eta =0.$
\end{remark}

\begin{theorem}
\label{EpsilonImprovement}
    For every $3 \leq p \leq \frac{n}{2}$ there exists $\varepsilon(p,n)>0$ with the following property. If $\omega \in \Lambda^pV^*,$ then there exists $\eta \in \Lambda^4V^*$ such that 
    \begin{align*}
        0 \leq \mathcal{A}_{\omega} + \hat{\eta} \leq (p - \varepsilon) | \omega|^2 \id_{\Lambda^2V}.
    \end{align*}
\end{theorem}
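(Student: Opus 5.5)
By homogeneity it suffices to treat the unit sphere $\Sigma = \{ \omega \in \Lambda^pV^* : |\omega| = 1\}$. The argument is a compactness and perturbation argument built on Corollary \ref{OptimalCorrectionForMaximizers}.

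Define the continuous function
\begin{align*}
f(\omega) = \min_{\eta} \lambda_{\max}(\mathcal{A}_{\omega} + \hat{\eta}),
\end{align*}
where the minimum runs over those $\eta$ in a fixed large ball of $\Lambda^4V^*$ for which $\mathcal{A}_\omega+\hat\eta \geq 0$. It is simplest to restrict to $\eta$ of the form $t\,\eta_0$ with $t \in [0,1]$ and $\eta_0$ adapted to a nearby maximizer. The goal is to show $f < p$ on all of $\Sigma$. Since $\Sigma$ is compact, a uniform $\varepsilon$ then follows, provided $f$ (or a suitable upper bound for it) is upper semicontinuous.

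\emph{Step 1 (the non-maximizers).} If $\omega \in \Sigma$ is not a maximizer of $|L\omega|^2 \leq p|L|^2|\omega|^2$, then $\lambda_{\max}(\mathcal{A}_\omega) < p$ already, so $\eta=0$ works for this $\omega$. Note that $\lambda_{\max}(\mathcal{A}_\omega) = \max_{|L|=1}|L\omega|^2$ is continuous in $\omega$. Hence the set $\mathcal{M}\subset \Sigma$ of maximizers is compact, and on $\Sigma \setminus U$, for any open neighbourhood $U$ of $\mathcal{M}$, we get $\lambda_{\max}(\mathcal{A}_\omega) \leq p-\varepsilon_1$. By Proposition \ref{CharacterizationMaximizer}, $\mathcal{M}$ is the $O(V)$-orbit of $\sqrt2\operatorname{Re}\varphi^1\wedge\ldots\wedge\varphi^p$ together with its negative.

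\emph{Step 2 (near a maximizer), the main obstacle.} Near $\mathcal{M}$ we cannot use $\eta=0$, and the full correction of Corollary \ref{OptimalCorrectionForMaximizers} may destroy nonnegativity after perturbation. The reason is that $\mathcal{A}_{\omega_0}-\frac p2\widehat{\Omega^\flat\wedge\Omega^\flat}$ has kernel on $\Lambda^{1,1}_{0,\R}E\oplus\Lambda^2F$, since by Proposition \ref{ActionOnMaximizer} and Lemma \ref{Correction4Form} both $\mathcal{A}_{\omega_0}$ and the correction vanish there. Small perturbations could therefore push eigenvalues negative. To avoid this, use only a fraction of the correction: take $\eta_\omega = -t\,\frac p2\,\Omega_\omega^\flat\wedge\Omega_\omega^\flat$ for a fixed small $t\in(0,1)$, where $\omega_0\in\mathcal M$ is a nearest maximizer to $\omega$ and $\Omega_\omega$ is its Kähler bivector. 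At $\omega_0$ we compute, from Proposition \ref{ActionOnMaximizer} and Lemma \ref{Correction4Form}, the eigenvalues of $\mathcal{A}_{\omega_0}+\hat\eta$:
\begin{itemize}
\item $p-t(p-1)$ on $\Omega$,
\item $t$ on $\Lambda^{1,1}_{0,\R}E$,
\item $1-t$ on $(\Lambda^{2,0}\oplus\Lambda^{0,2})_\R$,
\item $\frac12$ on $E\wedge F$,
\item $0$ on $\Lambda^2F$.
\end{itemize}
Everything is then $\leq p - t(p-1)$ and $\geq 0$. The one problem is the kernel on $\Lambda^2F$, but $\hat\eta$ vanishes there and on $E\wedge F$.

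To keep $\mathcal{A}_\omega+\hat\eta\geq 0$ for nearby $\omega$, decompose $\Lambda^2V = W\oplus \Lambda^2F$ with $W$ the orthogonal complement. On $W$ the operator $\mathcal{A}_{\omega_0}+\hat\eta$ is $\geq \min(t,\frac12,1-t)>0$, which is robust under perturbation. Since $\hat\eta$ is supported on $\Lambda^2E$, the blocks of $\mathcal{A}_\omega+\hat\eta$ involving $\Lambda^2F$ coincide with those of $\mathcal{A}_\omega$. Nonnegativity then follows from a Schur complement argument. The total operator equals $\mathcal{A}_\omega + \hat\eta$, where $\mathcal A_\omega\ge0$ and $\hat\eta$ is concentrated on $\Lambda^2E\subset W$. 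For $\alpha=w+f$ we get
\begin{align*}
g((\mathcal A_\omega+\hat\eta)\alpha,\alpha)= g(\mathcal A_\omega\alpha,\alpha)+g(\hat\eta w,w).
\end{align*}
Moreover, $g(\hat\eta w, w)\ge -t\,g(\mathcal A_{\omega_0} w,w)$. This holds because $\hat\eta$ is negative only on $\Lambda^{1,1}_{0}E$, where $\mathcal A_{\omega_0}$ vanishes, so I must instead compare against $\mathcal{A}_{\omega_0}+\hat\eta\ge c\,\mathrm{id}$ on $W$. This is the delicate point. I would handle it by proving the estimate $g(\mathcal{A}_\omega\alpha,\alpha)\geq (1-\delta)\,g(\mathcal{A}_{\omega_0}\alpha,\alpha)-C|\omega-\omega_0|\,|\alpha|^2$, and absorbing the error on $W$ using the positive gap $c$. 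On $\Lambda^2F$ no correction is needed, and the cross terms are controlled by $C|\omega-\omega_0|$ together with Cauchy--Schwarz against the $W$ gap. If this fails, the fallback is to shrink $t$ proportionally to $|\omega-\omega_0|$ and use a second-order expansion of $\lambda_{\max}(\mathcal{A}_\omega)$ along the normal directions to $\mathcal M$.

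\emph{Step 3 (conclusion).} The upper bound $\lambda_{\max}(\mathcal{A}_\omega+\hat\eta)\leq p-t(p-1)+C|\omega-\omega_0|$ gives, on a small neighbourhood $U$ of $\mathcal{M}$, the estimate $\leq p-\varepsilon_2$. Taking $\varepsilon=\min(\varepsilon_1,\varepsilon_2)$, with $\varepsilon_1$ from Step 1 for this $U$, proves the theorem. Tracking the constants, which depend only on $p$ via the orbit structure, should also make $\varepsilon$ independent of $n$.
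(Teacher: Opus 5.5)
Your overall architecture is the paper's. Away from the orbit of maximizers you use $\eta=0$ and compactness. Near it you take the nearest maximizer $\omega_0$ and a fraction $t$ of the correction from Corollary~\ref{OptimalCorrectionForMaximizers}; the paper takes $t=\tfrac12$, i.e.\ $\mathcal{A}_\omega-\tfrac p4\,\Omega^\flat\wedge\Omega^\flat$. Your eigenvalue list at $\omega_0$ and the upper bound in Step~3 are fine.

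The gap is the step you yourself call delicate: nonnegativity near $\omega_0$ despite the kernel $\Lambda^2F$. The estimates you propose cannot close it. For $\omega=\omega_0+\mu$, the $\Lambda^2F$-block of $\mathcal{A}_\omega+\hat\eta$ is only $(\mathcal{A}_\mu)|_{\Lambda^2F}$. Its quadratic form is $f\mapsto|f\mu|^2$, which is quadratic in $\mu$ and may degenerate. Consequently:
\begin{itemize}
\item an error $-C|\omega-\omega_0|\,|\alpha|^2$ tested on $\alpha\in\Lambda^2F$ is first order, and nothing absorbs it;
\item Cauchy--Schwarz of a cross term of size $C|\mu|\,|w|\,|f|$ against the $W$-gap $c$ leaves $-\tfrac{C^2}{c}|\mu|^2|f|^2$, which is again unabsorbable;
\item shrinking $t$ only shrinks $c=\min\{t,1-t,\tfrac12\}$, and the second-order fallback is not spelled out.
\end{itemize}

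There are also two smaller slips. By Lemma~\ref{Correction4Form}, the full correction $\eta=-\tfrac p2\,\Omega^\flat\wedge\Omega^\flat$ has $\hat\eta=+\id$ on $\Lambda^{1,1}_{0,\R}E$. So the fully corrected operator has kernel $(\Lambda^{2,0}E\oplus\Lambda^{0,2}E)_\R\oplus\Lambda^2F$, not $\Lambda^{1,1}_{0,\R}E\oplus\Lambda^2F$. Likewise, your $\hat\eta_\omega$ is $+t$ on $\Lambda^{1,1}_{0,\R}E$ and negative on $\R\Omega$ and $(\Lambda^{2,0}E\oplus\Lambda^{0,2}E)_\R$. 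The inequality $\hat\eta\geq -t\,\mathcal{A}_{\omega_0}$ you wrote nevertheless holds.

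The missing idea is what the nearest-point choice gives you at first order, which you never use. Since $\omega_0$ minimizes the distance from $\omega$ to $\mathcal{O}$, we have $\mu\perp T_{\omega_0}\mathcal{O}=\mathfrak{so}(V)\omega_0$. Hence the bracket term in Proposition~\ref{PropertiesBilinearOperator}~(d) drops out, and $g(\mathcal{B}_{\omega_0,\mu}\alpha,\beta)=2g(\alpha\omega_0,\beta\mu)$ for all $\alpha,\beta\in\Lambda^2V$. As $\iota_v\omega_0=0$ for $v\in F$, we have $\alpha\omega_0=0$ for $\alpha\in\Lambda^2F$. Therefore $\mathcal{B}_{\omega_0,\mu}$ annihilates $\Lambda^2F$: its $\Lambda^2F$-block and its $W$--$\Lambda^2F$ cross block vanish exactly, not merely up to $O(|\mu|)$.

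Write $v=v_0+v_F$ with $v_0\in W$, and $\mathcal{A}_\omega=\mathcal{A}_{\omega_0}+\mathcal{B}_{\omega_0,\mu}+\mathcal{A}_\mu$. Using only $\mathcal{A}_\mu\geq 0$ and $\|\mathcal{B}_{\omega_0,\mu}\|_{\text{op}}\leq 2p|\mu|$, this gives, for $|\mu|\leq c/(2p)$,
\[
g\bigl((\mathcal{A}_\omega+\hat\eta)v,v\bigr)=g\bigl((\mathcal{A}_{\omega_0}+\hat\eta)v_0,v_0\bigr)+g\bigl(\mathcal{B}_{\omega_0,\mu}v_0,v_0\bigr)+g\bigl(\mathcal{A}_\mu v,v\bigr)\geq (c-2p|\mu|)\,|v_0|^2\geq 0 .
\]
No Schur complement is needed; this is exactly steps (b)--(c) of the paper's proof.
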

\begin{proof}
    (a) For a maximizer $\omega_{\max} \in \Lambda^pV^*$ of the estimate $|L \omega |^2 \leq p |L|^2 |\omega|^2,$ Proposition \ref{ActionOnMaximizer} yields an orthonormal basis $e_1, \ldots, e_n$ for $V$ and a decomposition $V=E \oplus F$ with $E=\operatorname{span} \{ e_1, \ldots, e_{2p} \},$ $F=\operatorname{span} \{ e_{2p+1}, \ldots, e_n \}$ which induces a decomposition of $\Lambda^2V$ into eigenspaces of $\mathcal{A}_{\omega_{\max}}.$ We may assume $| \omega_{\max}|=1.$ Then every unit norm maximizer is contained in the $O(V)$ orbit of $\omega_{\max}$
    \begin{align*}
        \mathcal{O} = O(V) \cdot \omega_{\max} = \{ \omega \in \Lambda^p V^* \ | \ | \omega |=1, \ |L \omega |^2 = p |L|^2 |\omega|^2, \ 0 \neq L \in \mathfrak{so}(V) \}.
    \end{align*}
    If $\Omega = \frac{1}{\sqrt{p}} \sum_{j=1}^p e_{2j-1} \wedge e_{2j},$ then $T = \mathcal{A}_{\omega_{\max}} - \frac{p}{4} \Omega^{\flat} \wedge \Omega^{\flat}$ satisfies
    \begin{align*}
        T & = \frac{p+1}{2} \id \hspace{3.8mm} \text{ on } \ \R \Omega, \\
        T & = \frac{1}{2} \id \hspace{10.3mm} \text{ on } \ \Lambda^{1,1}_{0} E \oplus \left( \Lambda^{2,0}E \oplus \Lambda^{0,2}E \right)_{\R} \oplus \ E \wedge F, \\
        T & = 0  \hspace{15.7mm} \text{ on } \ \Lambda^2 F
    \end{align*}
    according to Propositions \ref{ActionOnMaximizer} and \ref{Correction4Form}.

    (b) The idea is to use the correction operator $\mathcal{A}_{\omega} - \frac{p}{4} \Omega^{\flat} \wedge \Omega^{\flat}$ also for forms $\omega$ close to the orbit $\mathcal{O}.$ The key step is to prove that the perturbed operator is nonnegative on $\Lambda^2F$. 
    
    Suppose that $\omega \in \Lambda^p V^*$ has unit norm. Pick $\omega_{\max} \in \mathcal{O}$ minimizing the distance to $\omega$ and $\mu \in \Lambda^p V^*$ such that $\omega = \omega_{\max} + \mu.$ Since $\mu$ is orthogonal to $T_{\omega_{\max}} \mathcal{O}$ and the tangent space is $T_{\omega_{\max}}\mathcal{O} = \mathfrak{so}(V) \omega_{\max},$ it follows that $g(\alpha \omega_{\max}, \mu)=0$ for all $\alpha \in \Lambda^2V.$ In view of Proposition \ref{PropertiesBilinearOperator}, to determine nonnegativity of $\mathcal{A}_{\omega_{\max} + \mu}= \mathcal{A}_{\omega_{\max}} + \mathcal{B}_{\omega_{\max}, \mu} + \mathcal{A}_{\mu}$ consider $\mathcal{B}_{\omega_{\max}, \mu}.$ By the previous observation and Proposition \ref{PropertiesBilinearOperator} (d), we have
    \begin{align*}
        g( \mathcal{B}_{\omega_{\max}, \mu}(\alpha), \beta) = 2g( \alpha \omega_{\max}, \beta \mu) -  g([\alpha, \beta] \omega_{\max}, \mu)= 2 g( \alpha \omega_{\max}, \beta \mu) 
    \end{align*}
    for all $\alpha, \beta \in \Lambda^2V.$ By construction, $\iota_v\omega_{\max} = 0$ for all $v \in F$ and hence $\alpha \omega_{\max} =0$ for all $\alpha \in \Lambda^2F.$ This implies $\mathcal{B}_{\omega_{\max},\mu} = 0$ on $\Lambda^2F.$

    Therefore, for all $\omega \in \Lambda^pV^*,$ the correction operator satisfies
    \begin{align*}
        g( (\mathcal{A}_{\omega} - \frac{p}{4} \Omega^{\flat} \wedge \Omega^{\flat})(\alpha), \alpha) =  g(\mathcal{A}_{\mu}(\alpha), \alpha) \geq 0
    \end{align*}
    for all $\alpha \in \Lambda^2F.$

    (c) Pick $r >0$ such that $2pr < \frac{1}{2}$ and $\varepsilon_1 = \frac{p-1}{2}-pr^2-2pr > 0.$ Let $\mathcal{U}$ consist of all unit norm $\omega \in \Lambda^pV^*$ of distance at most $r>0$ from $\mathcal{O}.$ As in (b), $\omega \in \mathcal{U}$ determines $\omega_{\max} \in \mathcal{O},$ $\mu = \omega - \omega_{\max}$ with $|\mu|<r,$ the decomposition $V = E \oplus F$ and $\Omega \in \Lambda^2E.$ For $v \in \Lambda^2V$ write $v=v_0 + v_{\Lambda^2F}.$ Then (a) and Proposition \ref{PropertiesBilinearOperator} show 
    \begin{align*}
        g((\mathcal{A}_{\omega} - \frac{p}{4} \Omega^{\flat} \wedge \Omega^{\flat})(v),v) =  & \ g((\mathcal{A}_{\omega_{\max}} - \frac{p}{4} \Omega^{\flat} \wedge \Omega^{\flat})(v_0),v_0) \\
        & \ + g(\mathcal{B}_{\omega_{\max},\mu}(v_0),v_0) + g(\mathcal{A}_{\mu}(v),v) \\
        \geq & \ \left( \frac{1}{2}-2p|\mu| \right) |v_0|^2 \geq 0
    \end{align*}
    and similarly 
    \begin{align*}
        g((\mathcal{A}_{\omega} - \frac{p}{4} \Omega^{\flat} \wedge \Omega^{\flat})(v),v) \leq \frac{p+1}{2} | v_0|^2 + 2p|\mu| |v_0|^2 + p |\mu|^2 |v|^2  < \left( p - \varepsilon_1 \right) |v|^2.
    \end{align*}
    This implies that for all $\omega \in \mathcal{U}$ there exists $\eta \in \Lambda^4V^*$ such that
    \begin{align*}
        0 \leq \mathcal{A}_{\omega} + \hat{\eta} \leq (p-\varepsilon_1)  \id_{\Lambda^2V}.
    \end{align*}

    (d) As $K = \{ \omega \in \Lambda^pV^* \ | \ | \omega|=1 \} \setminus \mathcal{U}$ is compact, there is $\varepsilon_2 > 0$ such that
    \begin{align*}
        0 \leq \mathcal{A}_{\omega} \leq (p-\varepsilon_2) \id_{\Lambda^2V}
    \end{align*}
    for all $\omega \in K.$ Taking $\varepsilon=\min \{ \varepsilon_1, \varepsilon_2 \} >0$ yields the claim. 
\end{proof}

\textit{Proof of Theorem  \ref{GeneralRigidityTheorem}.} Let $3 \leq p \leq \frac{n}{2}$ and pick $\delta(p,n)>0$ according to Theorem \ref{EpsilonImprovement}. Let $\omega \in \Lambda^pT^*M$ be harmonic. Then Theorem \ref{EpsilonImprovement} shows that there exists $\eta \in \Lambda^4T^*M$ such that $0 \leq \mathcal{A}_{\omega} + \hat{\eta} \leq (p-\delta) | \omega |^2 \id_{\Lambda^2TM}.$ Thus, Theorem \ref{SharpnessCriterion} applies with $k=n-p+\varepsilon(p,n).$ In particular, if the curvature operator $\mathcal{R}$ is $\left( n-p+\varepsilon \right)$-nonnegative (respectively $\left( n-p+\varepsilon \right)$-positive), then $\Ric_L \geq 0$ (respectively $\Ric_L > 0)$ due to Proposition \ref{LichnerowiczAsTrace} and Remark \ref{Positivity}. As every harmonic form satisfies 
\begin{align*}
    \Delta \frac{1}{2} | \omega|^2 = | \nabla \omega|^2 + g(\Ric_L(\omega),\omega),
\end{align*}
the maximum principle implies that $\omega$ is parallel (respectively $\omega$ vanishes).

For the estimation theorem, recall that Ricci curvature is bounded from below by the lowest $(n-1)$ eigenvalues of $\mathcal{R}.$ In particular, if the eigenvalues of the curvature operator satisfy $\lambda_1 + \ldots + \lambda_{n-p} + \varepsilon \lambda_{n-p+1} \geq (n-p+\varepsilon) \kappa$, then Ricci curvature is bounded from below and Theorem \ref{SharpnessCriterion} shows that $g(\Ric_L(\omega), \omega) \geq p(n-p)\kappa |\omega|^2.$ The work of P. Li \cite{LiSobolevConstant} and Gallot \cite{GallotSobolevEstimates} now yields the estimation theorem, see also \cite[Theorem 1.9]{PetersenWinkNewCurvatureConditionsBochner}. $\hfill \Box$\vspace{2mm}

Theorem \ref{EpsilonImprovementManifolds} is a special case of Theorem \ref{GeneralRigidityTheorem}. Theorem \ref{GeneralHomologySphereTheorem} follows from Theorem \ref{EpsilonImprovementManifolds} and Poincar\'e duality. 

\begin{remark}
\label{EffectiveEstimateRemark}
    \normalfont
     It is possible to give a quantitative estimate in Theorem \ref{EpsilonImprovement}. In particular, one may pick $\varepsilon_p=\frac{1}{128p^2}$ independent of $n.$ The key step is Proposition \ref{EffectiveEstimate} below, which picks an explicit approximation of $\omega$ in $\mathcal{O}$. This yields the dichotomy that for $\omega \in \Lambda^pV^*$ with unit norm, either $0 \leq \mathcal{A}_{\omega} \leq (p - \varepsilon_p) \id_{\Lambda^2V}$ or Proposition \ref{EffectiveEstimate} applies with $r=\varepsilon_p$ and shows $\dist(\omega,\mathcal{O}) \leq \frac{1}{8p}$. In this case the proof of Theorem \ref{EpsilonImprovement} shows that $0 \leq \mathcal{A}_{\omega} + \hat{\eta} \leq \left( p-\frac{1}{2} \right) \id_{\Lambda^2V} \leq (p-\varepsilon_p) \id_{\Lambda^2V}.$
\end{remark}

\begin{proposition}
\label{EffectiveEstimate}
    Let $\omega \in \Lambda^pV^*$ with $|\omega|=1$ such that $| L \omega|^2 > (p-r) |L|^2$ for some $L \in \Lambda^2V.$ If $\mathcal{O} = \{ \omega \in \Lambda^p V^* \ | \ |L \omega |^2 = p |L|^2 |\omega|^2, \ | \omega |=1, \ L \neq 0 \},$ then $\dist(\omega, \mathcal{O}) \leq \sqrt{2r}$ for all $0 < r \leq \frac{1}{32p}.$
\end{proposition}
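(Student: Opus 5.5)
We normalize $|L|=1$ and reuse the spectral decomposition from the proof of Proposition \ref{CharacterizationMaximizer}. Choose an orthonormal basis with $L=\sum_j a_j e_{2j-1}\wedge e_{2j}$, $a_j \geq 0$, $\sum a_j^2=1$. Expand $\omega=\sum_{I,J}\omega_{IJ}\varphi^{IJ}$, so that $L\varphi^{IJ}=\sqrt{-1}\mu_{IJ}\varphi^{IJ}$ and $|\mu_{IJ}|^2\leq p$. The hypothesis reads
\begin{align*}
\sum_{I,J}|\omega_{IJ}|^2\left(p-|\mu_{IJ}|^2\right)<r, \qquad \sum_{I,J}|\omega_{IJ}|^2=1.
\end{align*}
The plan has two steps. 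First, show that the coefficients concentrate on the two multi-indices $\varphi=\varphi^{1}\wedge\ldots\wedge\varphi^{p}$ and $\bar\varphi$ for a suitably ordered set of $p$ planes. Second, show that $L$ is close to $\frac{1}{\sqrt p}\sum_{j\leq p}e_{2j-1}\wedge e_{2j}$. With both in hand, the real form $\sqrt2\operatorname{Re}(z\varphi)$, with $z$ the normalized coefficient, lies in $\mathcal{O}$ after the phase rotation from Proposition \ref{CharacterizationMaximizer}, and we estimate its distance to $\omega$.

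For the concentration step, we quantify the equality discussion in \eqref{EqualityCase1}. The first inequality in that chain is Cauchy--Schwarz, and its defect equals $\frac12\sum_{k,l}(\varepsilon^k a_l-\varepsilon^l a_k)^2$ over the support. The second inequality has defect $p-\#\mathrm{supp}+\sum_{k\notin \mathrm{supp}}a_k^2\cdot(\ldots)$. Set $S=\{(I,J): |\mu_{IJ}|^2>p-\frac12\}$. By Chebyshev, the weight of $\omega$ outside $S$ is at most $2r$. For $(I,J)\in S$ the support must have exactly $p$ elements, since with fewer elements $|\mu|^2\leq p-1$. All signs $\varepsilon^k$ must then agree, because a single sign flip costs at least about $4a_k a_l$, which is large once the $a_k$ on the support are near $1/\sqrt p$. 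Finally, $\sum_{k\notin\mathrm{supp}}a_k^2$ must be small. We expect this to force every $(I,J)\in S$ to have the same support $P=\{k_1,\ldots,k_p\}$, namely the indices of the $p$ largest $a_k$. The reason is that two distinct supports $P,P'$ would each need to carry almost all the mass of $a$, and this is impossible once $r\leq\frac{1}{32p}$, because then $\sum_{k\in P\setminus P'}a_k^2$ is simultaneously at least and at most something of order $1/p$. Hence $S\subseteq\{(P,\emptyset),(\emptyset,P)\}$. We then set $\omega_0=\omega_{P\emptyset}\varphi_P+\omega_{\emptyset P}\bar\varphi_P$, which is real because $\omega$ is real, and obtain $|\omega-\omega_0|^2\leq 2r$.

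For the final distance bound we take $\omega_{\max}=\omega_0/|\omega_0|\in\mathcal O$. Here $\omega_0$ is a multiple of $\sqrt2\operatorname{Re}(z\varphi_P)$, which is in the normal form of Proposition \ref{CharacterizationMaximizer} with $L'=\frac{1}{\sqrt p}\sum_{k\in P}e_{2k-1}\wedge e_{2k}$, so $\omega_{\max}\in\mathcal O$. This choice does not need $L$ itself to be close to $L'$, which simplifies the second step. Writing $s=|\omega-\omega_0|^2\leq 2r$, we have $|\omega_0|^2=1-s$ since $\omega_0$ is the orthogonal projection of $\omega$. Then
\begin{align*}
|\omega-\omega_{\max}|^2=2-2\sqrt{1-s}\leq 2s,
\end{align*}
which gives $\sqrt{4r}$ rather than the claimed $\sqrt{2r}$. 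To reach $\sqrt{2r}$ we must sharpen Chebyshev: every $(I,J)\notin\{(P,\emptyset),(\emptyset,P)\}$ should satisfy $|\mu_{IJ}|^2\leq p-1$, or at least $p-|\mu_{IJ}|^2\geq 1$, which bounds the off-weight by $r$ instead of $2r$.

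\textbf{Main obstacle.} The hardest step is proving this uniform gap $p-|\mu_{IJ}|^2\geq 1$ for all $(I,J)$ other than the two top ones, given that the $a_k$ are only approximately $1/\sqrt p$ on $P$. We would first show $\sum_{k\notin P}a_k^2\leq O(r)$ and $a_k\geq\frac{1}{\sqrt p}(1-O(pr))$ on $P$, using the weight at least $1-2r$ on $S$. Then we would check the three types of competitors directly: a sign flip, a smaller support, and a support that swaps one index of $P$ for one outside $P$. The smallness condition $r\leq\frac1{32p}$ should be exactly what makes each of these defects at least $1$ (or at least $\frac12$, with the constant adjusted).
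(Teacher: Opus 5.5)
Your architecture is the paper's: diagonalize $L$, show $\omega$ concentrates on $\varphi^{I_0J_0}$ and its conjugate, project and normalize, and use $|\omega-\omega_0/|\omega_0||^2=2-2|\omega_0|$. However, the step that delivers the constant $\sqrt{2r}$ is missing.

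What your argument actually establishes is the Chebyshev bound, weight off $S$ below $2r$. That gives only $\dist(\omega,\mathcal{O})\le\sqrt{2-2\sqrt{1-2r}}$, which exceeds $\sqrt{2r}$. The uniform gap $|\mu_{IJ}(L)|^2\le p-1$ for all $(I,J)\neq(I_0,J_0),(J_0,I_0)$ is named as the main obstacle but not proved.

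Your sketch for it also relies on $a_k\ge\frac{1}{\sqrt p}(1-O(pr))$ on the support, which is false. The condition $\mu_{I_0J_0}^2>p-r$ only gives $\sum_{k\in P}(a_k-\frac{1}{\sqrt p})^2\le\frac{2r}{p}$, so a single $a_k$ may deviate by a relative amount of order $\sqrt r$. Note that smaller supports and sign flips need no smallness of $r$: Cauchy--Schwarz and $a_k\ge 0$ give $|\mu_{IJ}|^2\le p-1$ for them directly. Only a same-sign support $P'\neq P$ of size $p$ is delicate. Your uniqueness-of-support claim for $S$ is true, e.g.\ since $\frac12\bigl(\sum_{k\in P}a_k+\sum_{k\in P'}a_k\bigr)\le\sqrt{(p+|P\cap P'|)/2}\le\sqrt{p-\frac12}$, but the reason you give is only heuristic.

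The missing idea is the comparison you set up as your second step and then discarded. Take $(I_0,J_0)$ with $\omega_{I_0J_0}\neq 0$ and $\mu_{I_0J_0}^2>p-r$; it exists because $|L\omega|^2$ is a weighted average of the $\mu_{IJ}^2$. Normalize so that $\mu_{I_0J_0}>0$, and set $L_0=\frac{1}{\sqrt p}\sum_k\varepsilon^k_{I_0J_0}e_{2k-1}\wedge e_{2k}$.

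Then $|L-L_0|^2=2(1-\mu_{I_0J_0}/\sqrt p)\le\frac{2r}{p}$, and the $\varphi^{IJ}$ diagonalize $L_0$ as well. The number $\sqrt p\,\mu_{IJ}(L_0)=\sum_k\varepsilon^k_{IJ}\varepsilon^k_{I_0J_0}$ is an integer of modulus at most $p$, with equality only for $\varphi^{IJ}\in\{\varphi^{I_0J_0},\overline{\varphi^{I_0J_0}}\}$. Hence every other index has $|\mu_{IJ}(L_0)|\le\frac{p-1}{\sqrt p}$.

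Cauchy--Schwarz gives $|\mu_{IJ}(L)-\mu_{IJ}(L_0)|\le\sqrt p\,|L-L_0|\le\sqrt{2r}$, and $r\le\frac{1}{32p}$ ensures $\frac{p-1}{\sqrt p}+\sqrt{2r}\le\sqrt{p-1}$. This handles all competitors at once. Therefore $p-r<|L\omega|^2\le p|\omega_0|^2+(p-1)(1-|\omega_0|^2)$, i.e.\ $|\omega_0|^2>1-r$, and $2-2|\omega_0|\le 2(1-\sqrt{1-r})\le 2r$. With this, your Chebyshev step and the support-uniqueness discussion become unnecessary.
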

\begin{proof}
    We may assume that $L$ is a unit length eigenvector corresponding to the maximal eigenvalue of $\mathcal{A}_{\omega}.$ 
    
    Revisit the proof of Proposition \ref{CharacterizationMaximizer}. With the notation from that proof, the condition $| L \omega|^2 > (p-r)$ with $r<1$ implies that there is $\omega_{I_0J_0} \neq 0$ such that $|\mu_{I_0J_0}|^2>(p-r)$ and there are exactly $p$ many nonzero $\varepsilon^k_{I_0J_0}.$ We may assume that $\sum_k \varepsilon_{I_0J_0}^k a_k = \mu_{I_0J_0} >0$ by changing the sign of all $\varepsilon_{I_0J_0}^k$ if needed. 

    Define 
    \begin{align*}
        L_0 & = \frac{1}{\sqrt{p}} \sum_{k} \varepsilon_{I_0J_0}^k e_{2k-1} \wedge e_{2k} \\
        \varphi^{IJ} & = \varphi^{i_1} \wedge \ldots \wedge \varphi^{i_k} \wedge \overline{\varphi^{j_1}} \wedge \ldots \wedge \overline{\varphi^{j_{p-k}}},
    \end{align*}
    and recall that the $\varphi^{IJ}$ diagonalize $L,$ $L \varphi^{IJ} = \sqrt{-1} \mu_{IJ}(L) \varphi^{IJ}.$ Furthermore, it follows that $|L_0|=1$ and $L_0 \varphi^{I_0J_0} = \frac{\sqrt{-1}}{\sqrt{p}} \sum_k \left( \varepsilon_{I_0J_0}^k \right)^2 \varphi^{I_0J_0} = \sqrt{-1}\sqrt{p} \varphi^{I_0J_0}.$ Moreover, the $\varphi^{IJ}$ also diagonalize $L_0,$ $L_0 \varphi^{IJ} = \sqrt{-1} \mu_{IJ}(L_0) \varphi^{IJ}$ and either $|\mu_{IJ}(L_0)|= \sqrt{p}$ and $\varphi^{IJ} = \varphi^{I_0J_0}, \overline{\varphi^{I_0J_0}}$ or $|\mu_{IJ}(L_0)| \leq \frac{p-1}{\sqrt{p}}$ as in the latter case there are at most $(p-1)$ many $\varepsilon_{IJ}^k$ which can be nonzero. 

    Now note that $| L - L_0|^2 = 2 -2 g(L,L_0)=2\left( 1- \frac{\mu_{I_0J_0}}{\sqrt{p}} \right) \leq \frac{2r}{p}.$ Hence, Cauchy-Schwarz implies 
    \begin{align*}
        |\mu_{IJ}(L)-\mu_{IJ}(L_0)| = \left| \sum_k \varepsilon_{IJ}^k \left( a_k - \frac{\varepsilon_{I_0J_0}^k}{\sqrt{p}} \right) \right| \leq \sqrt{p} | L - L_0 | \leq \sqrt{2 r}.
    \end{align*}
    It follows for $(I,J) \neq (I_0,J_0),$ $ (J_0, I_0)$, which correspond to $\varphi^{I_0J_0}$ and $ \overline{\varphi^{I_0J_0}}$, that 
    \begin{align*}
        | \mu_{IJ}(L) | \leq \frac{p-1}{\sqrt{p}} + \sqrt{2r} \leq \sqrt{p-1}.
    \end{align*}

    Let $P$ be the projection onto the span of $\sqrt{2} \operatorname{Re} \varphi^{I_0J_0}, \sqrt{2} \operatorname{Im} \varphi^{I_0J_0} \in \mathcal{O}.$ Then $\omega = P \omega + (\omega-P\omega)$ and the eigenvalue estimates yield
    \begin{align*}
        p-r < \lambda_{\max}( \mathcal{A}_{\omega}) & = | L\omega|^2 \leq p |P\omega|^2 + (p-1) (1-|P\omega|^2) \\
        & = p-1 + |P\omega|^2
    \end{align*}
    and hence $|P \omega|^2 > 1 - r.$ It follows that
    \begin{align*}
        \left|\omega - \frac{P \omega}{|P \omega|}\right|^2 = 2( 1 - |P \omega|) \leq 2 ( 1 - \sqrt{1-r}) \leq 2r
    \end{align*}
    as claimed.
\end{proof}

\section{Three-forms in dimension six}

Let $(V,g)$ be a $6$-dimensional Euclidean vector space. In this section we exhibit a normal form for $\omega \in \Lambda^3V^*$ with respect to the $O(V)$ action on $\Lambda^3V^*.$ We note that there is a normal form with respect to the $GL(V)$ action due to Reichel \cite{ReichelDissertation} and Bryant \cite{BryantGeometryAlmostComplexSixManifolds}. The approach presented here builds up on the work of Hitchin \cite{HitchinThreeFormsSixDim}.  

Choose an orientation of $V.$ The induced Hodge star operator satisfies $*^2=-\id$ on $\Lambda^3V^*$. We extend it complex linearly to $\Lambda^3(V^{*} \otimes \C).$

\begin{lemma} \label{decomposableForms}
    For every decomposable complex $3$-form $\alpha \in \Lambda^3(V^{*} \otimes \C)$ there exist a positively oriented orthonormal basis $e_1, \ldots, e_6$ for $V,$ $z \in \C$ and $a_1, \ldots, a_6 \in \R$ with $a_i^2 + a_{i+3}^2=1$ and $a_i \geq |a_{i+3} |\geq 0$ for $i=1,2,3$ such that $\alpha = z \alpha_0$ where 
    \begin{align*}
        \alpha_0 =  (a_1e^1 + \sqrt{-1} a_4 e^4) \wedge (a_2e^2 + \sqrt{-1} a_5 e^5) \wedge (a_3e^3 + \sqrt{-1} a_6 e^6).
    \end{align*}
\end{lemma}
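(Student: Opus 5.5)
We would write the decomposable form as $\alpha = \theta^1 \wedge \theta^2 \wedge \theta^3$ with $\theta^k = u^k + \sqrt{-1} v^k$, where $u^k, v^k \in V^*$. The complex span $W = \operatorname{span}_{\C}\{\theta^1,\theta^2,\theta^3\} \subseteq V^* \otimes \C$ is a $3$-dimensional complex subspace. It determines $\alpha$ up to a complex scalar, and any complex basis of $W$ may be used in place of the $\theta^k$. The task is therefore to choose a good basis of $W$ and a good real orthonormal basis of $V$.

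The key tool is the complex symmetric bilinear form $b(\xi,\zeta) = g(\xi,\zeta)$, extended complex bilinearly (not Hermitian), restricted to $W$. By Takagi's factorization (the singular value decomposition for complex symmetric matrices), there is a basis of $W$ that is unitary for the Hermitian form $h(\xi,\zeta)=g(\xi,\bar\zeta)$ and diagonalizes $b$ with nonnegative real diagonal entries. We rescale this basis by suitable phases and normalizations. Write each basis element as $\theta^k = x^k + \sqrt{-1} y^k$, where $x^k, y^k$ are real. The two conditions on the basis then amount to relations among the real and imaginary parts. The condition $b(\theta^k,\theta^l)=0$ for $k\neq l$ together with $h(\theta^k,\theta^l)=0$ gives $g(x^k,x^l)=g(y^k,y^l)$ and $g(x^k,y^l)=-g(y^k,x^l)$ from $h$, and the same relations with the opposite sign from $b$. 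Hence all cross inner products between different indices vanish. Within a single index, the fact that $b(\theta^k,\theta^k)$ is real forces $g(x^k,y^k)=0$, and $b(\theta^k,\theta^k) \geq 0$ forces $|x^k| \geq |y^k|$.

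It follows that the six real covectors $x^1,x^2,x^3,y^1,y^2,y^3$ are mutually orthogonal. We normalize each $\theta^k$ so that $|x^k|^2+|y^k|^2=1$ and set $a_k=|x^k|$, $a_{k+3}=|y^k|$. Then $a_k \geq a_{k+3} \geq 0$ and $a_k^2+a_{k+3}^2=1$. Since $a_k^2 \geq \tfrac12$, each $x^k$ is nonzero, and we take $e^k = x^k/a_k$. Where $y^k \neq 0$ we take $e^{k+3} = y^k/a_{k+3}$, and otherwise we complete to an orthonormal basis. This gives $\theta^k = a_k e^k + \sqrt{-1} a_{k+3} e^{k+3}$, so $\alpha = z\alpha_0$ for some $z$, where $z$ absorbs the change of basis of $W$ and the normalizations.

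The remaining issue is orientation. If the basis $e_1,\ldots,e_6$ is negatively oriented, we replace one vector by its negative. Replacing $e^6$ by $-e^6$ (that is, $e_6$ by $-e_6$) would change the sign in front of $a_6$, which would violate the condition $a_3 \geq a_6 \geq 0$. This is why the statement only requires $a_i \geq |a_{i+3}|$. Alternatively, we may flip $e_3$ and $e_6$ simultaneously, which does not change the orientation. Instead, to repair orientation we flip $e_6$, allowing $a_6$ to become negative, or we flip $e_3$ and absorb the resulting sign into $z$ when $a_6=0$. In the case $a_6 \neq 0$ we simply allow $a_6<0$, which the statement permits.

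The main obstacle is the simultaneous diagonalization step. One has to verify carefully that Takagi's theorem applies and that the two conditions ($h$-orthonormality and $b$-diagonality) really yield the real orthogonality relations above. The degenerate cases, where some $y^k=0$ or where $b$ has repeated diagonal entries, need separate but routine attention.
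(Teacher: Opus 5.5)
Your proposal is correct and follows the paper's proof essentially step for step. Both arguments apply Takagi's normal form to the complex-bilinear Gram matrix of an $h$-unitary basis. Both then split into real and imaginary parts to get six mutually orthogonal real covectors with $|x^k|^2+|y^k|^2=1$ and $|x^k|\geq|y^k|$. Both fix the orientation by flipping $e_6$ or choosing the sign of a completing vector, which is exactly why only $a_i \geq |a_{i+3}|$ is required.

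The only slip is cosmetic: the relations $g(x^k,x^l)=g(y^k,y^l)$ and $g(x^k,y^l)=-g(y^k,x^l)$ come from $b$, while $h$ gives the opposite signs. Since you use both sets of relations, the conclusion that all cross inner products vanish is unaffected.
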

\begin{proof}
    As $\alpha$ is decomposable, there are $z \in \C$ and $u_1, u_2, u_3 \in V \otimes \C$ with $g(u_i, \overline{u_j})=\delta_{ij}$ such that $\alpha = z u^1 \wedge u^2 \wedge u^3.$ As $\alpha$ only depends on the subspace spanned by $u_1, u_2, u_3,$ we may diagonalize $(g(u_i, u_j))_{i,j}$ with a unitary transformation and assume that in addition $g(u_i, u_j)= c_i \delta_{ij}$ with $c_i \geq 0,$ using the Takagi normal form of a complex symmetric matrix. Moreover, as $u_1, u_2, u_3$ are unitary, Cauchy-Schwarz implies $0 \leq c_i \leq 1$ for $i=1,2,3.$ 

    Set $u_i = x_i + \sqrt{-1} y_i$ with $x_i, y_i \in V.$ Then $g(u_i, \overline{u_j})=\delta_{ij}$ and $g(u_i, u_j)= c_i \delta_{ij}$ yield 
    \begin{align*}
        g(x_i, x_j) + g(y_i, y_j) & = \delta_{ij} \\
        g(x_i, x_j) - g(y_i, y_j) & = c_i \delta_{ij} \\
        g(x_i,y_j) & = 0.
    \end{align*}
    In particular, $|x_i|^2=(1+c_i)/2,$ $|y_i|^2=(1-c_i)/2$ and $x_1, x_2, x_3, y_1, y_2, y_3$ are mutually orthogonal. Normalization and orientation induce the required orthonormal basis $e_1, \ldots, e_6$ (extending appropriately if $y_i=0$). Note that $a_i^2=|x_i|^2$ and $a_{i+3}^2=|y_i|^2$ for $i=1,2,3$ and the claim follows.
\end{proof}

In the following we use the notation $e^{ijk}=e^i \wedge e^j \wedge e^k.$

\begin{corollary}
\label{ConversionToRealForm}
    For every decomposable complex $3$-form $\alpha \in \Lambda^3(V^{*} \otimes \C)$ there exist a positively oriented orthonormal basis $e_1, \ldots, e_6$ for $V,$ $t \in \R$ and $a,b,c,d \in \R$ with
\begin{align}
\label{CoefficientInequalities}
    a \leq b \leq c \leq d, \ 0 \leq b+c,  \ a+c+d \leq  b
\end{align}
such that 
\begin{align*}
    \alpha - \sqrt{-1} * \alpha = e^{it} ( \omega_0 - \sqrt{-1} * \omega_0 ) 
\end{align*}
where 
\begin{equation*}
    \omega_0 = a e^{123} + be^{156} - c e^{246} + d e^{345}.
\end{equation*}
\end{corollary}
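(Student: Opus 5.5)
The plan is to start from Lemma \ref{decomposableForms} and turn the complex normal form $\alpha = z\alpha_0$ into a real one. The key observation is that for any complex $3$-form $\beta$ we have $*(\beta - \sqrt{-1}*\beta) = \sqrt{-1}(\beta - \sqrt{-1}*\beta)$, since $*^2=-\id$. So $\beta - \sqrt{-1}*\beta$ lies in the $\sqrt{-1}$-eigenspace $\Lambda^3_+$ of $*$. Every element $\gamma$ of that eigenspace can be written uniquely as $\gamma = \omega - \sqrt{-1}*\omega$ with $\omega = \operatorname{Re}\gamma$ real. Hence it suffices to understand the real form $\operatorname{Re}(\alpha_0 - \sqrt{-1}*\alpha_0)$, up to an overall phase coming from $z$ and from any extra phase we introduce.

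\textbf{Step 1: expand $\alpha_0$ and reduce to four basis forms.} Write $a_j = \cos\theta_j$, $a_{j+3} = \sin\theta_j$ and expand $\alpha_0 = \prod_{j=1}^3(\cos\theta_j e^j + \sqrt{-1}\sin\theta_j e^{j+3})$ into the eight monomials $e^{123}, e^{126}, e^{153}, \ldots, e^{456}$. These monomials pair up under $*$ into four pairs, namely $\{e^{123},e^{456}\}$, $\{e^{156},e^{234}\}$, $\{e^{246},e^{135}\}$ and $\{e^{345},e^{126}\}$. Consequently
\begin{align*}
\alpha_0 - \sqrt{-1}*\alpha_0 = \sum_{k=1}^4 c_k(\sigma_k - \sqrt{-1}*\sigma_k), \qquad (\sigma_1,\ldots,\sigma_4) = (e^{123}, e^{156}, -e^{246}, e^{345}),
\end{align*}
for explicit complex coefficients $c_k$. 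Each $c_k$ is a sum of two products of the form $\cos\cos\cos$ and $\sin\sin\sin$, or $\cos\sin\sin$ and $\sin\cos\cos$, with a relative factor $\pm\sqrt{-1}$. I will compute these four coefficients using the orientation convention for $*$.

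\textbf{Step 2: make all coefficients real with a common phase.} I will use the remaining symmetries to rotate the $c_k$ to a common phase. These are the overall phase $e^{it}$ and rotations of the frame in the three planes $\operatorname{span}\{e_j, e_{j+3}\}$; such rotations preserve orientation and permute the pairs above. Parameter count: there are $8$ real coefficients and $4$ real parameters, leaving exactly $4$ real parameters $a,b,c,d$. I expect the trigonometric structure to make this concrete. Each $c_k$ should be a combination like $\cos(\theta_1\pm\theta_2\pm\theta_3)$ twisted by a phase $e^{\pm\sqrt{-1}(\ldots)}$, so that choosing the plane rotations and $t$ appropriately aligns all phases. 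If the direct approach gets messy, I will first apply a plane rotation to bring $\alpha_0$ to a form where only the phase $z$ remains, and then absorb $z$ into $t$.

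\textbf{Step 3: normalize the real coefficients.} To achieve \eqref{CoefficientInequalities}, I will use the residual discrete symmetries. These are orientation-preserving permutations of the frame, sign changes of pairs of basis vectors, and $t \mapsto t+\pi$, which flips all signs. Together they should permute $(a,b,c,d)$ and change signs in pairs, so that one can first order $a\le b\le c\le d$ and then arrange $0\le b+c$. The hardest step, I expect, is the last inequality $a+c+d\le b$. This is not produced by ordering alone. It must come from the specific constraint $a_i \ge |a_{i+3}|$ of Lemma \ref{decomposableForms}, or equivalently from the fact that the $c_k$ arise from a decomposable form, whose four real coefficients satisfy a trigonometric relation. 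My first attempt will be to express $a,b,c,d$ as $\tfrac12\cos$ or $\tfrac12\sin$ of the sums $\theta_1\pm\theta_2\pm\theta_3$ and verify the inequality by sum-to-product identities with $\theta_j\in[0,\pi/4]$. Should this fail, I would exploit the remaining freedom in the choice of $t$, since $t\mapsto t+\pi/2$ exchanges $\omega_0$ and $*\omega_0$, to select the representative that satisfies it.
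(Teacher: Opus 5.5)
The proposal has a genuine gap in Step 3, which is where the whole content of the corollary lies. The inequalities \eqref{CoefficientInequalities} are not a normalization that symmetries alone can produce. They encode a positivity that has to be derived from $a_j \geq |a_{j+3}|$, and you give no mechanism for this. Your guess that $a,b,c,d$ are $\tfrac12\cos$ or $\tfrac12\sin$ of $\theta_1\pm\theta_2\pm\theta_3$ is false: each is a sum of two triple products.

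The missing idea is that the natural coordinates are the four combinations
\begin{align*}
\nu_4 &= -a+b+c+d = r(a_1+a_4)(a_2+a_5)(a_3+a_6), \\
\nu_3 &= -a-b-c+d = r(a_1-a_4)(a_2-a_5)(a_3+a_6), \\
\nu_2 &= -a-b+c-d = r(a_1-a_4)(a_2+a_5)(a_3-a_6), \\
\nu_1 &= -a+b-c-d = r(a_1+a_4)(a_2-a_5)(a_3-a_6),
\end{align*}
where $z=re^{it}$ and
\[
a=-r(a_1a_2a_3+a_4a_5a_6),\quad b=r(a_1a_5a_6+a_2a_3a_4),\quad c=r(a_1a_3a_5+a_2a_4a_6),\quad d=r(a_1a_2a_6+a_3a_4a_5).
\]
These factorizations give $\nu_i\geq 0$ directly from $a_j\geq|a_{j+3}|$. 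Flipping the signs of two of $e_4,e_5,e_6$, and permuting the pairs $(e_j,e_{j+3})$, act by permuting $\nu_1,\ldots,\nu_4$. So you may assume $0\le\nu_1\le\nu_2\le\nu_3\le\nu_4$, and this is exactly \eqref{CoefficientInequalities}:
\[
b+c=\tfrac12(\nu_4-\nu_3),\quad d-c=\tfrac12(\nu_3-\nu_2),\quad c-b=\tfrac12(\nu_2-\nu_1),\quad b-a=\tfrac12(\nu_4+\nu_1),
\]
and $a+c+d\le b$ is precisely $\nu_1\geq 0$.

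Your plan of first ordering $a\le b\le c\le d$ and then arranging $b+c\ge0$ does not see this structure and can get stuck. For example, $(a,b,c,d)=(-1,0,\tfrac12,\tfrac52)$ meets both conditions but has $(\nu_1,\nu_2,\nu_3,\nu_4)=(-2,-1,3,4)$, so it violates $a+c+d\le b$.

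Nor can a choice of representative substitute for the positivity. The discrete moves you list act on $(\nu_1,\ldots,\nu_4)$ by permutations combined with an even number of sign changes, so they preserve the sign of $\nu_1\nu_2\nu_3\nu_4$. In contrast, \eqref{CoefficientInequalities} forces all $\nu_i\geq0$. Your fallback $t\mapsto t+\pi/2$ is a case in point. Since $*\omega_0$ is the image of $\omega_0$ under the rotation by $\pi/2$ in each plane $\operatorname{span}\{e_j,e_{j+3}\}$, re-expressing $*\omega_0$ in the rotated frame returns exactly the same $(a,b,c,d)$.

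Step 2 rests on a misprediction and is unnecessary. Each of your representatives $\sigma_k$ has an even number of indices in $\{4,5,6\}$, so the factors of $\sqrt{-1}$ cancel and the coefficients are real:
\[
c_1=a_1a_2a_3+a_4a_5a_6,\quad c_2=-(a_1a_5a_6+a_2a_3a_4),\quad c_3=-(a_1a_3a_5+a_2a_4a_6),\quad c_4=-(a_1a_2a_6+a_3a_4a_5).
\]
Hence $\alpha-\sqrt{-1}*\alpha=e^{i(t+\pi)}(\omega_0-\sqrt{-1}*\omega_0)$ follows immediately, with $a,b,c,d$ as above and no phase alignment. The continuous rotations in the planes $\operatorname{span}\{e_j,e_{j+3}\}$ that you want to use would in fact destroy the normal form of Lemma \ref{decomposableForms}, and they do not permute the $*$-pairs. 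So the parameter count is beside the point; everything reduces to the factorization and sorting described above.
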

\begin{proof}
With the notation in Lemma \ref{decomposableForms}, set $z = r e^{it}$  with $r \geq 0$ and define 
\begin{align*}
    a & = - r( a_1 a_2 a_3 + a_4 a_5 a_6), \ \
    b  = + r (a_1 a_5 a_6 + a_2 a_3 a_4), \\
    c & = + r( a_2 a_4 a_6 + a_1 a_3 a_5), \ \
    d = + r (a_3 a_4 a_5 + a_1 a_2 a_6) 
\end{align*}
and 
\begin{align*}
    \omega_0 = a e^{123} + b  e^{156} - c e^{246} + d  e^{345}.
\end{align*}
Then the normal form in Lemma \ref{decomposableForms} shows that $\alpha = z \alpha_0$ satisfies
\begin{align*}
\alpha - \sqrt{-1} * \alpha = e^{i(t+\pi)} \left( \omega_0 - \sqrt{-1} * \omega_0 \right).
\end{align*}

It remains to show that the coefficients $a,b,c,d \in \R$ can be assumed to satisfy \eqref{CoefficientInequalities}, after possibly choosing a different orthonormal basis. An elementary computation confirms that 
\begin{align*}
    \nu_4 : = -a + b + c + d & = r (a_1 + a_4)(a_2 + a_5) (a_3 + a_6) \geq 0, \\
    \nu_3 : = -a - b - c + d & = r (a_1 - a_4)(a_2 - a_5) (a_3 + a_6) \geq 0, \\
    \nu_2 : = -a - b + c - d & = r (a_1 - a_4)(a_2 + a_5) (a_3 - a_6) \geq 0, \\
    \nu_1 : = -a + b - c - d & = r (a_1 + a_4)(a_2 - a_5) (a_3 - a_6) \geq 0
\end{align*}
since $a_i \geq | a_{i+3}| \geq 0$ for $i=1,2,3.$ 

Note that changing the sign of any two among $e_4,$ $e_5,$ $e_6$ in the basis $e_1, \ldots, e_6$ gives rise to an orientation preserving coordinate change which changes the sign of the corresponding $a_4,$ $a_5,$ $a_6.$ As a result, we may arrange that $\nu_4 \geq \nu_i$ for $i=1,2,3.$ Similarly, note that  permuting the pairs $(e_1, e_4),$ $(e_2,e_5),$ $(e_3,e_6)$ in the basis $e_1, \ldots, e_6$ gives rise to an orientation preserving coordinate change which exchanges the roles of the corresponding pairs $(a_i, a_{i+3}).$ In particular, this permutes $\nu_1, \nu_2, \nu_3$ and hence we may assume $0 \leq \nu_1 \leq \nu_2 \leq \nu_3 \leq \nu_4.$ 
Then the inequalities on $\nu_i$ imply
\begin{align*}
b+c & = \frac{1}{2} \left( \nu_4 - \nu_3 \right) \geq 0, \
d-c = \frac{1}{2} \left( \nu_3 - \nu_2 \right) \geq 0, \\
c-b & = \frac{1}{2} \left( \nu_2 - \nu_1 \right) \geq 0, \
b-a= \frac{1}{2} \left( \nu_4 + \nu_1 \right) \geq 0.
\end{align*}
and with $-a+b-c-d = \nu_1 \geq 0$ this yields the claim.
\end{proof}

\begin{definition} \label{DefRotationAndIsomorphism} \normalfont
    (a) For $t \in \R$ define $R_t \colon \Lambda^3V^{*} \to \Lambda^3V^{*}$ by
\begin{align*}
    R_t \omega = ( \cos t ) \omega + ( \sin t) * \omega.
\end{align*}
    (b) Set $\Lambda_{i} = \{ \alpha \in \Lambda^3 (V^* \otimes \C) \ | *\alpha = \sqrt{-1} \alpha \}$ and let $L \colon \Lambda^3 V^{*} \to \Lambda_{i}$ denote the isomorphism
    \begin{align*}
        L(\omega) = \omega - \sqrt{-1} * \omega.
    \end{align*}
\end{definition}

\begin{proposition}
\label{PropertiesRotation}
The map $R_t$ has the following properties:
\begin{enumerate}
    \item $| R_t \omega | = |\omega|$  
    \item $\mathcal{A}_{R_t\omega} = \mathcal{A}_{\omega}$
    \item $L(R_t \omega) = e^{it} L(\omega)$
\end{enumerate}
\end{proposition}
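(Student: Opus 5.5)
The three properties are checked separately. Parts (a) and (c) are formal consequences of $*^2=-\id$ on $\Lambda^3V^*$. Part (b) is the real content: it rests on the fact that $*$ commutes with the Lie algebra action of $\mathfrak{so}(V)$.

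For (a), the Hodge star is an isometry. Since $*^2=-\id$, we have $g(\omega,*\omega) = g(*\omega,**\omega) = -g(*\omega,\omega)$, so $\omega \perp *\omega$. Hence
\begin{align*}
|R_t\omega|^2=\cos^2 t\,|\omega|^2+\sin^2 t\,|*\omega|^2=|\omega|^2 .
\end{align*}

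For (c), we use $*R_t\omega = \cos t\, *\omega - \sin t\,\omega$ and compute
\begin{align*}
L(R_t\omega) = \cos t\,\omega + \sin t\,*\omega - \sqrt{-1}\cos t\,*\omega + \sqrt{-1}\sin t\,\omega = (\cos t+\sqrt{-1}\sin t)(\omega - \sqrt{-1}*\omega),
\end{align*}
which is $e^{it}L(\omega)$.

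For (b), the first step is that $*$ is $SO(V)$-equivariant, because it is defined from the metric and the orientation. Differentiating this equivariance gives $\Xi(*\omega)=*(\Xi\omega)$ for all $\Xi\in\mathfrak{so}(V)\cong\Lambda^2V$. Consequently
\begin{align*}
\alpha R_t\omega=\cos t\,\alpha\omega+\sin t\,*(\alpha\omega)=R_t(\alpha\omega).
\end{align*}
Now $R_t$ is an isometry of $\Lambda^3V^*$: the computation in (a) used only $*^2=-\id$ and the fact that $*$ is an isometry, and it applies to any $3$-form, so it applies to $\alpha\omega$. Also $R_t$ is linear. Polarizing $|R_t\cdot|=|\cdot|$ therefore gives
\begin{align*}
g(\alpha R_t\omega,\beta R_t\omega)=g(R_t(\alpha\omega),R_t(\beta\omega))=g(\alpha\omega,\beta\omega),
\end{align*}
which is $\mathcal{A}_{R_t\omega}=\mathcal{A}_\omega$.

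The only step needing care is the commutation $\Xi(*\omega)=*(\Xi\omega)$. It holds because we restrict to $SO(V)$, which preserves the orientation. If one prefers not to differentiate the equivariance, it can be checked directly on the basis elements $e_i\wedge e_j$ acting on the forms $e^{klm}$.
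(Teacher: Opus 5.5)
Your proof is correct and follows essentially the same approach as the paper. Parts (a) and (c) are the direct computations from $*^2=-\id$ and the isometry property of $*$. Part (b) uses the paper's route: $*$, and hence $R_t$, commutes with the $\mathfrak{so}(V)$-action, and $R_t$ is orthogonal; you simply spell out details (such as $\omega\perp *\omega$ and obtaining the commutation by differentiating $SO(V)$-equivariance) that the paper leaves implicit.
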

\begin{proof}
    (a) is a direct computation. For (b) recall that the Hodge star operator commutes with the Lie algebra action on forms, i.e., $*(\Xi \omega) = \Xi (* \omega)$ for $\Xi \in \Lambda^2V \cong \mathfrak{so}(V).$ It follows that $\Xi (R_t \omega) = R_t (\Xi \omega).$ As $R_t$ is an orthogonal map by (a), the defining property of $\mathcal{A}_{\omega}$ in Definition \ref{ActionOperator} implies the claim. (c) is a short computation again.
\end{proof}

\begin{theorem}
\label{NormalForm}
    Let $(V,g)$ be an oriented real $6$-dimensional Euclidean vector space. For every $\omega \in \Lambda^3V^*$ there exist a positively oriented orthonormal basis $e_1, \ldots, e_6$ for $V,$ $t \in \R$ and $a,b,c,d \in \R$ with
    \begin{align*}
    a \leq b \leq c \leq d, \ 0 \leq b+c,  \ a+c+d \leq  b
\end{align*}
such that
    \begin{align*}
        R_t \omega = \omega_0 = a e^{123} + be^{156} - c e^{246} + d e^{345}.
    \end{align*}
\end{theorem}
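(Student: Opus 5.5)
The plan is to reduce the statement to Corollary \ref{ConversionToRealForm} through the isomorphism $L$ of Definition \ref{DefRotationAndIsomorphism}. Since $*^2=-\id$ on $\Lambda^3V^*$, the element $L(\omega)=\omega-\sqrt{-1}*\omega$ satisfies $*L(\omega)=*\omega+\sqrt{-1}\omega=\sqrt{-1}L(\omega)$. It therefore lies in $\Lambda_i$, and $L$ is injective because $\omega=\operatorname{Re}L(\omega)$. The first and main step is to show that $L(\omega)$ can be written as $\alpha-\sqrt{-1}*\alpha$ for some decomposable complex $3$-form $\alpha$. Given a complex $3$-form $\alpha$, its projection onto $\Lambda_i$ is $\frac12(\alpha-\sqrt{-1}*\alpha)$. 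So I need to show that every element of the $10$-dimensional complex space $\Lambda_i$ is, up to a factor, the $\Lambda_i$-part of a decomposable form.

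I would prove this step with Hitchin's picture \cite{HitchinThreeFormsSixDim}. Decomposable forms $u^1\wedge u^2\wedge u^3$ with $u_i\in V\otimes\C$ form the cone over the Grassmannian $\operatorname{Gr}_3(\C^6)$, which has complex dimension $9$, so the cone has dimension $10=\dim_\C\Lambda_i$. Hence the projection $\pi$ of the decomposable cone onto $\Lambda_i$ can be dominant. To show it is surjective, I would use that $\pi$ is a homogeneous polynomial map on the affine cone. Its only common zero is $0$: a decomposable $\alpha$ with $*\alpha=-\sqrt{-1}\alpha$ would be an anti-self-dual decomposable form, and I would rule out a nonzero one by checking it on the unitary/Takagi normal form from Lemma \ref{decomposableForms}. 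A finite map with no nontrivial common zero is closed, and it has full-dimensional image, so it is surjective.

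A more hands-on alternative is an orbit count. $SO(6)$ acts on $\Lambda_i\cong\Lambda^3_+(\C^6)$. Given the normal form of Lemma \ref{decomposableForms}, the family $\pi(z\alpha_0)$ with parameters $z$ and the $a_i$, swept out by $SO(6)$, should fill an open set. Combined with homogeneity and closedness of the image, this again gives all of $\Lambda_i$. I expect this surjectivity to be the real obstacle. Everything after it is bookkeeping.

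Assuming this step, write $L(\omega)=\alpha-\sqrt{-1}*\alpha$ with $\alpha$ decomposable. Corollary \ref{ConversionToRealForm} then gives a positively oriented orthonormal basis $e_1,\ldots,e_6$, a number $s\in\R$, and coefficients $a,b,c,d$ satisfying \eqref{CoefficientInequalities} such that $L(\omega)=e^{is}L(\omega_0)$. Setting $t=-s$, Proposition \ref{PropertiesRotation} (c) gives $L(R_t\omega)=e^{-is}L(\omega)=L(\omega_0)$. Injectivity of $L$ then yields $R_t\omega=\omega_0$, which is the claimed normal form.
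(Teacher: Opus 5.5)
Your reduction to Corollary \ref{ConversionToRealForm} through $L$, and the final bookkeeping with $R_t$ and injectivity of $L$, match what the paper does for generic $\omega$. The gap is the step you identify as the main one. That step is not merely hard, it is false, and both of your arguments for it fail at the same point.

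First, there are nonzero decomposable anti-self-dual forms. For a positively oriented orthonormal basis, $\overline{\varphi}=(e^1-\sqrt{-1}e^4)\wedge(e^2-\sqrt{-1}e^5)\wedge(e^3-\sqrt{-1}e^6)$ satisfies $*\overline{\varphi}=-\sqrt{-1}\,\overline{\varphi}$, hence $\overline{\varphi}-\sqrt{-1}*\overline{\varphi}=0$. In Lemma \ref{decomposableForms} this is the case $a_{j+3}=-a_j$, where all four $\nu_i$ in the proof of Corollary \ref{ConversionToRealForm} vanish. So the projection of the decomposable cone to $\Lambda_i$ is not proper. The closedness of its image, which both your properness argument and your orbit-count variant rely on, therefore fails.

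Concretely, the proof of Corollary \ref{ConversionToRealForm} gives
$\nu_4=r(a_1+a_4)(a_2+a_5)(a_3+a_6)$, $\nu_3=r(a_1-a_4)(a_2-a_5)(a_3+a_6)$, $\nu_2=r(a_1-a_4)(a_2+a_5)(a_3-a_6)$, $\nu_1=r(a_1+a_4)(a_2-a_5)(a_3-a_6)$.
Every triple of these involves all six factors, so if any three are nonzero, so is the fourth. Up to a common factor, the $\nu_i$ are the coefficients of $\alpha-\sqrt{-1}*\alpha$ with respect to the four forms $\bigwedge_j(e^j+\sqrt{-1}\epsilon_je^{j+3})$ with $\epsilon_1\epsilon_2\epsilon_3=1$. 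Under $\Lambda_i\cong S^2\C^4$, the number of nonzero coefficients is the rank of a symmetric matrix, which is invariant under $SO(V)$ and phases. Now take $\omega_0=-\frac34e^{123}-\frac14e^{156}-\frac14e^{246}+\frac14e^{345}$. It satisfies \eqref{CoefficientInequalities} and has $(\nu_1,\nu_2,\nu_3,\nu_4)=(0,1,1,1)$. Hence $L(\omega_0)$ is not of the form $\alpha-\sqrt{-1}*\alpha$ for any decomposable $\alpha$; such forms lie on the hypersurface $\lambda=0$.

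The missing idea is to pass the normal form to the limit, not the decomposable representation. The paper applies Hitchin's Proposition 1 only when $\lambda(L(\omega))\neq0$. In that case $L(\omega)=\alpha+\beta$ with $\alpha,\beta$ decomposable, $\alpha\wedge\beta\neq0$, and the pair unique up to order. Applying $*$ and using uniqueness gives $\beta=-\sqrt{-1}*\alpha$. (Two elements of $\Lambda_i$ always wedge to zero, so $\alpha$ and $\beta$ cannot both lie in $\Lambda_i$.) Since $\lambda$ is a nonzero quartic on $\Lambda_i$ (it does not vanish at $L(e^{123})$), such $\omega$ are dense. For general $\omega$, take $\omega_k\to\omega$ with normal forms. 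Then extract convergent subsequences of the frames in $SO(V)$, the phases $t_k$, and the coefficients $(a_k,b_k,c_k,d_k)$. The coefficients are bounded by $|\omega_k|$ and satisfy closed inequalities, so the normal form survives in the limit.
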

\begin{proof}
    For a given $\Omega \in \Lambda^3 (V^{*} \otimes \C)$, Hitchin defines $\lambda( \Omega) \in \left(\Lambda^6 (V^{*} \otimes \C)\right)^{\otimes 2}$ in \cite{HitchinThreeFormsSixDim}. 
    
    Suppose that $\lambda( L(\omega)) \neq 0.$ Then, by \cite[Proposition 1]{HitchinThreeFormsSixDim} there exist decomposable complex three-forms $\alpha, \beta$ with $\alpha \wedge \beta \neq 0$ such that $L(\omega) = \alpha + \beta$ and $\alpha, \beta$ are unique up to reordering. The definition of the isomorphism $L$ implies that $\sqrt{-1} \alpha + \sqrt{-1} \beta = \sqrt{-1} L(\omega) = * L(\omega) = * \alpha + * \beta.$ Since $* \alpha, *\beta$ are decomposable again and $* \alpha \wedge * \beta = \alpha \wedge \beta \neq 0,$ the uniqueness part of Hitchin's result implies $* \alpha = \sqrt{-1} \beta.$ In particular, $L(\omega) = \alpha - \sqrt{-1} * \alpha.$ With $\omega_0 = a e^{123} + b  e^{156} - c e^{246} + d  e^{345}$, Corollary \ref{ConversionToRealForm} yields
    \begin{align*}
        L(\omega) = \alpha - \sqrt{-1} * \alpha = e^{it} (\omega_0 - \sqrt{-1} * \omega_0 ) = e^{it} L( \omega_0) = L(R_t \omega_0)
    \end{align*}
    and thus $\omega = R_t \omega_0$ as required. 

    To remove the assumption $\lambda(L(\omega)) \neq 0,$ observe first that \cite[Proposition 1]{HitchinThreeFormsSixDim} implies conversely that $\Omega_0= L( e^{123}) = e^{123} - \sqrt{-1} e^{456} \in \Lambda_i$ satisfies $\lambda(\Omega_0) \neq 0.$ As $\lambda$ is homogeneous of degree four, $s \mapsto \lambda( L(\omega) + s \Omega_0)$ defines a nonzero polynomial of degree four. This implies that the set $\lambda(\Omega) \neq 0$ is dense in $\Lambda_i.$ Thus, for a given $\omega \in \Lambda^3V^{*}$ we find $\omega_k \to \omega,$ a positively oriented orthonormal basis $e_1, \ldots, e_6$ for $V$ and sequences $O_k \in SO(V),$ $t_k \in \R$ and $a_k, b_k, c_k, d_k \in \R$ such that 
    \begin{align*}
        O_k \cdot R_{t_k} \omega_k = a_k e^{123} + b_k e^{156} - c_k e^{246} + d_k e^{345}.
    \end{align*}
    As $a_k^2+b_k^2+c_k^2+d_k^2=| \omega_k |^2 \to |\omega|^2,$ compactness yields the corresponding normal form for $\omega.$
\end{proof}

For later applications, we collect some basic consequences of the inequalities \eqref{CoefficientInequalities} on the coefficients in the normal form $\omega_0 = a e^{123} + be^{156} - c e^{246} + d e^{345}$. 

\begin{proposition}
\label{InequalitiesForEigenvalues}
    If $a,b,c,d \in \R$ satisfy
\begin{align*}
    a \leq b \leq c \leq d, \ 0 \leq b+c,  \ a+c+d \leq  b,
\end{align*}
then
\begin{align*}
        |b+c| & = b+c \leq d-a = |a-d|,  \ |b-c| = c-b \leq -a-d = |a+d|, \\
        |b+d| & = b+d \leq c-a = |a-c|,  \ |b-d| = d-b \leq -a-c = |a+c|, \\   
        |c+d| & = c+d \leq b-a = |a-b|,  \ |c-d| = d-c \leq -a-b = |a+b|, \\[2mm]
        & \hspace{3mm} -2(bc-ad) \leq -(b+c)^2, \ -2(bd-ac) \leq (b-d)^2.
    \end{align*}
\end{proposition}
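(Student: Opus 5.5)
The plan is to derive everything from the inequalities $a \leq b \leq c \leq d$, $0 \leq b+c$ and $a+c+d \leq b$ by elementary manipulations. First we settle the signs.

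\emph{Signs.} From $b+c \geq 0$ and $b \leq c$ we get $c \geq 0$, hence $d \geq c \geq 0$. The constraint $a \leq b-c-d$ combined with $b \leq c$ gives $a \leq -d \leq 0$. Then $a+d \leq b-c \leq 0$, $a+c \leq b-d \leq 0$ and $a+b \leq 2b-c-d \leq c-d \leq 0$, using $b \leq c \leq d$. Consequently $b+d \geq b+c \geq 0$ and $c+d \geq 0$. These facts justify all the absolute-value identifications in the statement, for example $|a-d| = d-a$ (since $a \leq 0 \leq d$), $|a+d| = -a-d$, $|a+c| = -a-c$ and $|a+b| = -a-b$.

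\emph{Linear inequalities.} Each of the six reduces directly to $a+c+d \leq b$ or to the ordering:
\begin{itemize}
\item $b+c \leq d-a$ is equivalent to $a+b+c \leq d$. This holds because $a+b+c \leq 2b-d \leq d$.
\item $c-b \leq -a-d$ is exactly $a+c+d \leq b$.
\item $b+d \leq c-a$ is equivalent to $a+b+d \leq c$. This follows from $a \leq b-c-d$, which gives $a+b+d \leq 2b-c \leq c$.
\item $d-b \leq -a-c$ is again $a+c+d \leq b$.
\item $c+d \leq b-a$ is the same inequality rearranged.
\item $d-c \leq -a-b$ is equivalent to $a+b+d \leq c$, which was already shown.
\end{itemize}

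\emph{Quadratic inequalities.} The first one, $-2(bc-ad) \leq -(b+c)^2$, is equivalent to $(b+c)^2 \leq 2(bc-ad)$, that is, $b^2+c^2 \leq -2ad$. Since $-a \geq d+c-b \geq 0$ and $d \geq 0$, we get $-2ad \geq 2d(d+c-b)$. It then suffices to show
\[
2d^2+2dc-2db-b^2-c^2 \geq 0.
\]
Use $d \geq c$ and $d \geq |b|$; note that $|b| \leq c$ because $b+c \geq 0$ and $b \leq c$. Write the left side as $(d^2-b^2) + (d^2-c^2) + 2d(c-b)$. Each of the three terms is nonnegative, so the inequality holds.

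The second one, $-2(bd-ac) \leq (b-d)^2$, is equivalent to $2ac \leq b^2+d^2$. Here $a \leq 0 \leq c$, so $2ac \leq 0 \leq b^2+d^2$, and the inequality is immediate.

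The only step requiring any care is the first quadratic inequality. There the key is to bound $-a$ from below by $d+c-b$ and multiply by $d \geq 0$, after first establishing $|b| \leq c \leq d$. I expect the sign bookkeeping there to be the main obstacle, and I would check it in the extreme case $b = -c$.
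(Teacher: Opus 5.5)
Your proof is correct and follows essentially the same route as the paper: you first establish $|b|\le c\le d$ and $a+d\le b-c\le 0$, then reduce each linear inequality to $a+c+d\le b$ or to the ordering, and finally handle the quadratic inequalities via $-b^2-c^2-2ad=(d^2-b^2)+(d^2-c^2)+2d(-a-d)\ge 0$ and $2ac\le 0$. Your bound $-a\ge d+c-b$ is just a slightly sharper form of the paper's $-a-d\ge 0$, used inside the same decomposition.
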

\begin{proof}
Note that $0 \leq b+c \leq 2c$ and hence $c \geq 0$ and $|b| \leq c.$ Together with $a+c+d \leq b$ this shows $-d-a \geq c-b \geq 0.$ 

This suffices to establish the computations for the absolute values. Note that $b+c \leq d-a$ since $d-a-b-c=-a-b+d-c \geq -a-d \geq 0.$ Furthermore, $(c-a)-(b+d)=2c-(a+c+d)-b \geq 2(c-b) \geq 0$ and $(d-c) \leq -a-b$ is similar. The other inequalities on the absolute values directly follow from $a+c+d \leq  b.$ Finally, $-(b+c)^2+2(bc-ad) = d^2-b^2+d^2-c^2+2d(-a-d) \geq 0$ and  
$(b-d)^2+2(bd-ac)=b^2 +d^2 -2ac \geq 0$ as $a \leq -d \leq 0$ and $c\geq 0.$
\end{proof}

\section{The Thorpe trick for three-forms in dimension six}

In this section we use the normal form constructed in Theorem \ref{NormalForm} to adjust the action operator $\mathcal{A}_{\omega}$ for every $\omega \in \Lambda^3(\R^6)^*.$ As an application we prove Theorems \ref{SixDimHomologySpheres} and \ref{TheoremThirdBettiNumber}.

\begin{theorem}\label{ThorpeTrick36}
    Let $V$ be a $6$-dimensional Euclidean vector space. For every $\omega \in \Lambda^3 V^{*}$ there exists $\eta \in \Lambda^4V^{*}$ such that
    \begin{align*}
        0 \leq \mathcal{A}_{\omega} + \hat{\eta} \leq 2 |\omega|^2 \id_{\Lambda^2V}.
    \end{align*}
\end{theorem}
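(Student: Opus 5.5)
By Proposition \ref{PropertiesRotation}(b) we have $\mathcal{A}_{R_t\omega}=\mathcal{A}_\omega$ and $|R_t\omega|=|\omega|$. By the equivariance \eqref{ActionEquivariance}, together with the fact that $O\cdot\hat\eta=\widehat{O\cdot\eta}$, the statement is invariant under $SO(V)$. So by Theorem \ref{NormalForm} it suffices to treat
\[
\omega_0=a e^{123}+be^{156}-ce^{246}+de^{345},\qquad |\omega_0|^2=a^2+b^2+c^2+d^2,
\]
with $a,b,c,d$ satisfying \eqref{CoefficientInequalities}. The plan is then to compute $\mathcal{A}_{\omega_0}$ explicitly in the basis $\{e_i\wedge e_j\}$. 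I will exhibit an explicit $\eta$ and check the two operator inequalities blockwise, using Proposition \ref{InequalitiesForEigenvalues}.

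\textbf{Block structure.} $\omega_0$ is fixed by the group $\Gamma\cong(\Z/2)^k$ of diagonal sign changes $e_i\mapsto s_ie_i$ with
\[
s_1s_2s_3=s_1s_5s_6=s_2s_4s_6=s_3s_4s_5=1 .
\]
Hence $\mathcal{A}_{\omega_0}$ commutes with $\Gamma$, and $\Lambda^2V$ splits into $\Gamma$-isotypic pieces. The combinatorics of the four triples should be used here. Each of the $12$ pairs other than $\{1,4\},\{2,5\},\{3,6\}$ lies in exactly one triple, while those three pairs lie in none. I expect this to give a splitting into one $3$-dimensional block spanned by $e_1\wedge e_4,\ e_2\wedge e_5,\ e_3\wedge e_6$, and small blocks, probably $2\times2$ or $3\times3$, pairing the remaining $e_i\wedge e_j$. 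On each block I will compute $g((e_i\wedge e_j)\omega_0,(e_k\wedge e_l)\omega_0)$ directly. The entries should be quadratic expressions such as $b^2+c^2$, $bc-ad$, $bd-ac$, and the eigenvalues should come out as combinations like $(b\pm c)^2,(a\pm d)^2,\dots$ plus diagonal terms. Proposition \ref{InequalitiesForEigenvalues} was clearly designed for exactly these expressions. The trace identity $\tr\mathcal{A}_{\omega_0}=9|\omega_0|^2$ serves as a check.

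\textbf{Choice of $\eta$.} The $\Gamma$-invariant $4$-forms include the complements of the pairs not occurring in $\omega_0$, namely $e^{2356},e^{1346},e^{1245}$. I will take
\[
\eta=x\,e^{2356}+y\,e^{1346}+z\,e^{1245}
\]
and possibly add further invariant $4$-forms if some block requires it. Then $\hat\eta$ acts only by swapping $e_1\wedge e_4\leftrightarrow e_2\wedge e_3$-type pairs, that is, by mixing $e_i\wedge e_{i+3}$ with the complementary $2$-planes. So $\hat\eta$ couples specific blocks, and I will recompute the eigenvalues of $\mathcal{A}_{\omega_0}+\hat\eta$ on the merged blocks. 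The coefficients $x,y,z$ should be linear combinations of the off-diagonal entries, for instance $x=\pm(bc-ad)$ or $\pm(bd-ac)$ up to a factor, chosen to cancel off-diagonal terms. The last two inequalities in Proposition \ref{InequalitiesForEigenvalues}, $-2(bc-ad)\le-(b+c)^2$ and $-2(bd-ac)\le(b-d)^2$, should be exactly what makes the resulting $2\times2$ or $3\times3$ determinants nonnegative, and what keeps the eigenvalues at most $2(a^2+b^2+c^2+d^2)$. The absolute-value inequalities will handle the remaining blocks, where the bound $\le 2|\omega_0|^2$ reduces to statements like $(b+c)^2\le(a-d)^2$.

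\textbf{Main obstacle.} I expect the hardest step to be the block containing $e_1\wedge e_4,e_2\wedge e_5,e_3\wedge e_6$ together with whatever $\hat\eta$ couples to it. There both inequalities $0\le\cdot\le 2|\omega_0|^2$ must hold simultaneously, and a single choice of $x,y,z$ has to work on the whole cone \eqref{CoefficientInequalities}. I would first try to choose $\eta$ so that this block becomes diagonal, and then verify the eigenvalue bounds using the ordering $a\le -d\le 0\le |b|\le c\le d$. If that fails, I would optimize over $(x,y,z)$ by checking the boundary cases of the cone, in particular the maximizer case $a=-d$, $b=c=0$, which is the maximizer of Proposition \ref{CharacterizationMaximizer}. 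There Corollary \ref{OptimalCorrectionForMaximizers} already yields the bound $|\omega|^2$, so this case indicates the correct sign and size of $\eta$.
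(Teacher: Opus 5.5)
\textbf{There is a genuine gap: the choice of $\eta$ that is the heart of the proof is missing, and the primary strategy you propose for it fails.}

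Your reduction to the normal form $\omega_0$ is the paper's. Your ansatz $\eta=x\,e^{2356}+y\,e^{1346}+z\,e^{1245}$ is also exactly the right family; your $\Gamma$-argument even shows these three span all $\Gamma$-invariant $4$-forms.

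But then $\hat\eta$ commutes with $\Gamma$, so it preserves every isotypic block. It acts on $\operatorname{span}\{e_1\wedge e_4,e_2\wedge e_5,e_3\wedge e_6\}$ by an off-diagonal symmetric matrix, and on each $2$-dimensional block (e.g.\ $\operatorname{span}\{e_1\wedge e_2,e_4\wedge e_5\}$) by a multiple of the swap. It does not couple $e_1\wedge e_4$ with $e_2\wedge e_3$, and no blocks merge.

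The real problem is the choice of $x,y,z$. Your main plan, killing the off-diagonal entries of the $3$-dimensional block, fails:
\begin{itemize}
\item Take $(a,b,c,d)=(-1,0,0,1)$, which satisfies \eqref{CoefficientInequalities}.
\item Then $(e_1\wedge e_2)\omega_0=(e_4\wedge e_5)\omega_0=0$, so $\mathcal{A}_{\omega_0}=0$ on $\operatorname{span}\{e_1\wedge e_2,e_4\wedge e_5\}$. There $\hat\eta$ has eigenvalues $\pm z$, so $\mathcal{A}_{\omega_0}+\hat\eta\geq 0$ forces $z=0$.
\item Yet the $(e_1\wedge e_4,e_2\wedge e_5)$-entry of $\mathcal{A}_{\omega_0}$ is $2(bc-ad)=2$, and only $z$ can cancel it.
\end{itemize}
In general, the inequality $-2(bc-ad)\leq-(b+c)^2$ of Proposition \ref{InequalitiesForEigenvalues} says the diagonalizing coefficient lies outside (at best on the boundary of) the interval of coefficients keeping $\mathcal{A}_{\omega_0}+\hat\eta$ nonnegative on $E_1\wedge E_2$. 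That inequality is the obstruction, not the reason determinants come out nonnegative.

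The missing idea is to clamp: choose each coefficient as close as possible to its diagonalizing value, subject to nonnegativity on the corresponding $E_i\wedge E_j$. In the paper's normalization $\eta=-h_{12}e^{1245}-h_{13}e^{1346}-h_{23}e^{2356}$, this gives $h_{12}=-(b+c)^2$ and piecewise definitions of $h_{13},h_{23}$.
\begin{itemize}
\item On each $E_i\wedge E_j$, the upper bound then follows from nonnegativity and $\tr_{E_i\wedge E_j}(\mathcal{A}_{\omega_0}+\hat\eta)=2|\omega_0|^2$.
\item On the $3$-dimensional block you are left with $|\omega_0|^2\id+R$, where $R$ is off-diagonal and generally nonzero. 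You must show $\|R\|_{\text{op}}\leq|\omega_0|^2$.
\item The paper does this with row-sum bounds $|r_{ij}|+|r_{ik}|\leq|\omega_0|^2$, verified case by case through explicit sum-of-squares identities.
\end{itemize}

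Your fallback supplies none of this, since checking boundary cases of the cone does not control the interior. Moreover, the case you single out is misidentified. For $a=-d$, $b=c=0$ one gets $\omega_0=(e^{45}-e^{12})\wedge e^3$, which up to $O(V)$ and scaling is Example \ref{ExampleSharpCandidate}. There every $\eta$ with $\mathcal{A}_{\omega_0}+\hat\eta\geq0$ leaves top eigenvalue at least $2|\omega_0|^2$, not $|\omega_0|^2$. The maximizer of Proposition \ref{CharacterizationMaximizer} is instead $-a=b=c=d$. That is precisely where the diagonalizing choice happens to be admissible, so it gives no guidance for the general case.
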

\begin{proof}
(a) Let $e_1, \ldots, e_6$ be an orthonormal basis for $V.$ Consider $\omega_0 = ae^{123} + be^{156} - c e^{246} + d e^{345}$ with $a, b,c,d \in \R$ satisfying \eqref{CoefficientInequalities}. To determine $\mathcal{A}_{\omega_0},$ we first compute $(e_i \wedge e_j)\omega_0$ for $1\leq i < j \leq 6.$ We obtain the following block decomposition.
\begin{center}
\begin{tabular}{c|c}
    $e_i \wedge e_j$ & $(e_i \wedge e_j)\omega_0$ \\[1mm]
    \hline & \\[-3mm]
    $12$ & $+c e^{146}+be^{256}$ \\
    $15$ & $-de^{134}+a e^{235}$ \\
    $24$ & $-ae^{134}+de^{235}$ \\
    $45$ & $-be^{146}-ce^{256}$ 
\end{tabular}
\begin{tabular}{c|c}
    $e_i \wedge e_j$ & $(e_i \wedge e_j)\omega_0$   \\[1mm]
    \hline & \\[-3mm]
    $13$ & $-de^{145}+be^{356}$  \\
    $16$ & $+ce^{124}+ae^{236}$ \\
    $34$ & $+ae^{124}+ce^{236}$ \\
    $46$ & $+be^{145}-de^{356}$ 
\end{tabular}
\begin{tabular}{c|c}
    $e_i \wedge e_j$ & $(e_i \wedge e_j)\omega_0$  \\[1mm]
    \hline & \\[-3mm]
    $23$ & $-de^{245}-ce^{346}$ \\
    $26$ & $+be^{125} - ae^{136}$ \\
    $35$ & $+ae^{125}-be^{136}$ \\
    $56$& $+ce^{245}+de^{346}$
\end{tabular}
\end{center}
\begin{center}
\begin{tabular}{c|c}
    $e_i \wedge e_j$ & $(e_i \wedge e_j)\omega_0$  \\[1mm]
    \hline & \\[-3mm]
    $14$ & $-ce^{126}+de^{135}+ae^{234}+be^{456}$ \\
    $25$ & $-be^{126}-ae^{135}-de^{234}+ce^{456}$ \\
    $36$ & $+ae^{126}+be^{135}-ce^{234}+de^{456}$
\end{tabular}
\end{center}
Set $E_0=\operatorname{span}\{ e_1\wedge e_4, e_2 \wedge e_5, e_3\wedge e_6 \}$ and $E_i=\operatorname{span} \{ e_i, e_{i+3} \}$ for $i=1,2,3.$ In particular, $\Lambda^2V = E_0 \bigoplus_{i<j} E_i \wedge E_j$ and $\mathcal{A}_{\omega_0}$ preserves each summand.

With respect to the ordered basis $e_1\wedge e_4,$ $e_2 \wedge e_5,$ $e_3\wedge e_6$ for $E_0,$ the matrix representation of $\mathcal{A}_{\omega_0}$ on $E_0$ is 
\begin{align*}
| \omega_0 |^2 \id + 2
    \begin{pmatrix}
      0 &  bc-ad & bd-ac  \\
      bc-ad & 0 & cd-ab \\
      bd-ac & cd-ab & 0
    \end{pmatrix}.
\end{align*}
On $E_i \wedge E_j$, $\mathcal{A}_{\omega_0}$ easily diagonalizes.
\begin{center}
\begin{tabular}{c|c}
\text{eigenvectors in} $E_1 \wedge E_2$ & \text{eigenvalue}  \\[1mm]
    \hline & \\[-3mm]
    $\frac{1}{\sqrt{2}} \left( e_{1} \wedge e_{2} + e_{4} \wedge e_{5} \right)$ & $(b-c)^2$ \\
    $\frac{1}{\sqrt{2}} \left( e_{1} \wedge e_{2} - e_{4} \wedge e_{5} \right)$ & $(b+c)^2$ \\
    $\frac{1}{\sqrt{2}} \left( e_{1} \wedge e_{5} + e_{2} \wedge e_{4} \right)$ & $(a+d)^2$ \\
    $\frac{1}{\sqrt{2}} \left( e_{1} \wedge e_{5} - e_{2} \wedge e_{4} \right)$ &  $(a-d)^2$
\end{tabular}
\begin{tabular}{c|c}
\text{eigenvectors in} $E_1 \wedge E_3$ & \text{eigenvalue}  \\[1mm]
    \hline & \\[-3mm]
    $\frac{1}{\sqrt{2}} \left( e_{1} \wedge e_{3} + e_{4} \wedge e_{6} \right)$ & $(b-d)^2$ \\
    $\frac{1}{\sqrt{2}} \left( e_{1} \wedge e_{3} - e_{4} \wedge e_{6} \right)$ & $(b+d)^2$ \\
    $\frac{1}{\sqrt{2}} \left( e_{1} \wedge e_{6} + e_{3} \wedge e_{4} \right)$ & $(a+c)^2$ \\
    $\frac{1}{\sqrt{2}} \left( e_{1} \wedge e_{6} - e_{3} \wedge e_{4} \right)$ & $(a-c)^2$
\end{tabular}

\begin{tabular}{c|c}
\text{eigenvectors in} $E_2 \wedge E_3$& \text{eigenvalue}  \\[1mm]
    \hline & \\[-3mm]
    $\frac{1}{\sqrt{2}} \left( e_{2} \wedge e_{3} + e_{5} \wedge e_{6} \right)$ & $(c-d)^2$ \\
    $\frac{1}{\sqrt{2}} \left( e_{2} \wedge e_{3} - e_{5} \wedge e_{6} \right)$ & $(c+d)^2$ \\
    $\frac{1}{\sqrt{2}} \left( e_{2} \wedge e_{6} + e_{3} \wedge e_{5} \right)$ & $(a+b)^2$ \\
    $\frac{1}{\sqrt{2}} \left( e_{2} \wedge e_{6} - e_{3} \wedge e_{5} \right)$ &  $(a-b)^2$
\end{tabular}
\end{center}
(b) We now construct an explicit correction term $\eta \in \Lambda^4V^{*}.$ To this end, define $\eta_{ij} \in \Lambda^4 V^*$ by $\eta_{ij}=e^i \wedge e^{i+3}\wedge e^{j} \wedge e^{j+3}$ and set 
\begin{align*}
    \eta = h_{12} \eta_{12} + h_{13} \eta_{13} + h_{23} \eta_{23} = - h_{12} e^{1245} - h_{13} e^{1346} - h_{23} e^{2356}.
\end{align*}

Then $g(\hat{\eta} (x \wedge y), z \wedge w)=\eta(x,y,z,w)$ implies by explicit evaluation that $\mathcal{A}_{\omega_0} + \hat{\eta}$ preserves the block decomposition from part (a). More precisely, $\mathcal{A}_{\omega_0} + \hat{\eta}$ has the matrix representation
\begin{align*}
    | \omega_0 |^2 \id + 2
    \begin{pmatrix}
      0 &  bc-ad & bd-ac  \\
      bc-ad & 0 & cd-ab \\
      bd-ac & cd-ab & 0
    \end{pmatrix}
    + 
    \begin{pmatrix}
      0 &  h_{12} & h_{13}  \\
      h_{12} & 0 & h_{23} \\
      h_{13} & h_{23} & 0
    \end{pmatrix}
\end{align*}
on $E_0.$ On $E_i \wedge E_j,$ note that $\hat{\eta}=h_{ij} \widehat{\eta_{ij}}$ and one checks that
\begin{align*}
    \frac{1}{\sqrt{2}} \left( e_{i} \wedge e_{j} - e_{i+3} \wedge e_{j+3} \right), \ \frac{1}{\sqrt{2}} \left( e_{i} \wedge e_{j+3} - e_{j} \wedge e_{i+3} \right) 
\end{align*}
are eigenvectors of $\widehat{\eta_{ij}}$ with eigenvalue $+1$ and
\begin{align*}
    \frac{1}{\sqrt{2}} \left( e_{i} \wedge e_{j} + e_{i+3} \wedge e_{j+3} \right), \ \frac{1}{\sqrt{2}} \left( e_{i} \wedge e_{j+3} + e_{j} \wedge e_{i+3} \right) 
\end{align*}
are eigenvectors of $\widehat{\eta_{ij}}$ with eigenvalue $-1.$ In particular, this basis simultaneously diagonalizes $\mathcal{A}_{\omega_0}$ and $\widehat{\eta_{ij}}$ on $E_i \wedge E_j.$ 

\textit{Guiding principle for the choice of $h_{ij}$.} Note that there is an obvious choice for $h_{ij}$ that yields $(\mathcal{A}_{\omega_0} + \hat{\eta})_{|E_0} = | \omega_0 |^2 \id_{E_0}.$ The idea is to choose $h_{ij}$ as close as possible to this ideal choice while still retaining nonnegativity of $(\mathcal{A}_{\omega_0} + \hat{\eta})$ on $E_i \wedge E_j.$

With regard to $h_{12},$ observe that Proposition \ref{InequalitiesForEigenvalues} establishes the inequalities $(b+c)^2 \leq (a-d)^2$ and $(b-c)^2 \leq (a+d)^2$ on the eigenvalues of $\mathcal{A}_{\omega_0}$ on $E_1 \wedge E_2.$ As $\mathcal{A}_{\omega_0}$ and $\hat{\eta}$ simultaneously diagonalize on $E_i \wedge E_j$ in the above bases, it is immediate that $\hat{\eta}$ has eigenvalue $+h_{12}$ on the eigenvectors for $(b+c)^2 \leq (a-d)^2$ and eigenvalue $-h_{12}$ on the eigenvectors for $(b-c)^2 \leq (a+d)^2.$ Thus, nonnegativity of $(\mathcal{A}_{\omega_0} + \hat{\eta})$ on $E_1 \wedge E_2$ is equivalent to $-(b+c)^2 \leq h_{12} \leq (b-c)^2.$ Also note that $-2(bc-ad) \leq -(b+c)^2$ holds automatically due to Proposition \ref{InequalitiesForEigenvalues}. The guiding principle therefore explains the definition
\begin{align*}
    h_{12} = -(b+c)^2.
\end{align*}

For $h_{13},$ the same strategy and the inequalities in Proposition \ref{InequalitiesForEigenvalues} yield the condition $-(b+d)^2 \leq h_{13} \leq (b-d)^2$ for nonnegativity of $(\mathcal{A}_{\omega_0} + \hat{\eta})$ on $E_1 \wedge E_3.$ Proposition \ref{InequalitiesForEigenvalues} shows that $-2(bd-ac) \leq (b-d)^2$ and we define 
\begin{align*}
    h_{13} = \begin{cases}
        -(b+d)^2 & \text{ if } -2(bd-ac) \leq -(b+d)^2, \\
         -2(bd-ac) & \text{ if } \hspace{4mm} -(b+d)^2 \leq -2(bd-ac).
    \end{cases}
\end{align*}

For $h_{23},$ we set 
\begin{align*}
    h_{23} = \begin{cases}
        -(c+d)^2 & \text{ if } -2(cd-ab) \leq -(c+d)^2, \\
        -2(cd-ab) & \text{ if } \hspace{4mm} -(c+d)^2 \leq -2(cd-ab) \leq (c-d)^2, \\
        (c-d)^2 & \text{ if } \hspace{8.6mm} (c-d)^2 \leq -2(cd-ab).
    \end{cases}
\end{align*}
With this choice, $-(c+d)^2 \leq h_{23} \leq (c-d)^2$ as required by the guiding principle and the eigenvalue inequalities in Proposition \ref{InequalitiesForEigenvalues}.

(c) The construction in (b) shows that $\mathcal{A}_{\omega_0} + \hat{\eta} \geq 0$ on $E_i \wedge E_j.$ Furthermore, as $\widehat{\eta_{ij}}$ has eigenvalues $\pm 1,$ each with multiplicity two, $\tr_{E_i \wedge E_j}( \mathcal{A}_{\omega_0} + \hat{\eta})=\tr_{E_i \wedge E_j}( \mathcal{A}_{\omega_0})\leq 2|\omega_0|^2$ and hence $0 \leq \mathcal{A}_{\omega_0} + \hat{\eta} \leq 2|\omega_0|^2 \id$ on $E_i \wedge E_j.$

It remains to check that the rows of the matrix 
\begin{align*}
    R= \begin{pmatrix}
    0 & r_{12} & r_{13} \\
    r_{12} & 0 & r_{23} \\
    r_{13} & r_{23} & 0 
    \end{pmatrix}
    =2
    \begin{pmatrix}
      0 &  bc-ad & bd-ac  \\
      bc-ad & 0 & cd-ab \\
      bd-ac & cd-ab & 0
    \end{pmatrix}
    + 
    \begin{pmatrix}
      0 &  h_{12} & h_{13}  \\
      h_{12} & 0 & h_{23} \\
      h_{13} & h_{23} & 0
    \end{pmatrix}
\end{align*}
satisfy
\begin{align} \label{RowInequality}
    |r_{12}|+|r_{13}|, \ |r_{12}|+|r_{23}|, \ |r_{13}|+|r_{23}| \leq |\omega_0|^2.
\end{align}
Indeed, suppose this is the case and let $v=(v_1,v_2,v_3)^T \in \R^3$ with $Rv=\lambda v.$ If $|v_{i}| = \max \{ |v_1|, |v_2|, |v_3| \},$ then $| \lambda | | v_{i} | \leq \sum_{j} |r_{ij} v_j | \leq |\omega_0|^2 |v_i|,$ i.e., $- |\omega_0|^2 \id \leq R \leq |\omega_0|^2 \id$ and thus $0 \leq \mathcal{A}_{\omega_0} + \hat{\eta} \leq 2|\omega_0|^2 \id$ on $E_0$ also.

To prove \eqref{RowInequality}, we run through the different choices for $h_{ij}.$

(c1) For $|r_{12}|+|r_{13}|$ we have two cases according to the choice of $h_{13}$. \\ If $-2(bd-ac) \leq -(b+d)^2,$ then 
\begin{align*}
    |r_{12}|+|r_{13}| & = 2(bc-ad)-(b+c)^2+ 2(bd-ac)-(b+d)^2 \\
    & = |\omega_0|^2 - \frac{1}{4}(a+3b+c+d)^2-(c-d)^2-\frac{3}{4}(-a+b-c-d)^2 \leq | \omega_0|^2.
\end{align*}
If $-(b+d)^2 \leq -2(bd-ac),$ then 
\begin{align*}
    |r_{12}|+|r_{13}| = 2(bc-ad)-(b+c)^2 \leq - 2ad \leq a^2 + d^2 \leq | \omega_0|^2.
\end{align*}

(c2) For  $|r_{12}|+|r_{23}|$ we have three cases. \\
If $-2(cd-ab) \leq -(c+d)^2,$ then 
\begin{align*}
    |r_{12}|+|r_{23}| & = 2(bc-ad)-(b+c)^2 + 2(cd-ab) - (c+d)^2 \\
    & = |\omega_0|^2 - \frac{1}{4} (a+b+3c+d)^2 - (d-b)^2 - \frac{3}{4}(a+b-c+d)^2 \leq |\omega_0|^2.
\end{align*}
If $-(c+d)^2 \leq  -2(cd-ab) \leq (c-d)^2,$ then 
\begin{align*}
    |r_{12}|+|r_{23}| = 2(bc-ad)-(b+c)^2 \leq -2ad \leq a^2 +d^2 \leq |\omega_0|^2.
\end{align*}
If $(c-d)^2 \leq  -2(cd-ab)$, then
\begin{align*}
    |r_{12}|+|r_{23}| & = 2(bc-ad)-(b+c)^2  - 2(cd-ab) - (c-d)^2 \\
    & = |\omega_0|^2 - \frac{1}{4} (a-b-3c+d)^2 - (b+d)^2 - \frac{3}{4}(a-b+c+d)^2 \leq |\omega_0|^2.
\end{align*}
(c3) For  $|r_{13}|+|r_{23}|$ there are six cases. \\
If $-2(bd-ac) \leq -(b+d)^2$ and $-2(cd-ab) \leq -(c+d)^2,$ then
\begin{align*}
    |r_{13}|+|r_{23}| & = 2(bd-ac)-(b+d)^2 + 2(cd-ab) - (c+d)^2 \\
    & = |\omega_0|^2 - \frac{1}{4}(a+b+c+3d)^2 -(c-b)^2-\frac{3}{4}(a+b+c-d)^2 \leq |\omega_0|^2.
\end{align*}
If $-2(bd-ac) \leq -(b+d)^2$ and $-(c+d)^2 \leq -2(cd-ab) \leq (c-d)^2,$ then
\begin{align*}
    |r_{13}|+|r_{23}| = 2(bd-ac)-(b+d)^2 \leq -2ac \leq a^2 + c^2 \leq |\omega_0|^2.
\end{align*}
If $-2(bd-ac) \leq -(b+d)^2$ and $(c-d)^2 \leq -2(cd-ab),$ then
\begin{align*}
    |r_{13}|+|r_{23}| & = 2(bd-ac)-(b+d)^2 - 2(cd-ab) - (c-d)^2 \\
    & = |\omega_0|^2 - \frac{1}{4}(a-b+c-3d)^2 - (b+c)^2- \frac{3}{4}(a-b+c+d)^2 \leq |\omega_0|^2. 
\end{align*}
If $-(b+d)^2 \leq -2(bd-ac)$ and $-2(cd-ab) \leq -(c+d)^2,$
\begin{align*}
    |r_{13}|+|r_{23}| = 2(cd-ab) - (c+d)^2 \leq -2ab \leq a^2 + b^2 \leq | \omega_0|^2.
\end{align*}
If $-(b+d)^2 \leq -2(bd-ac)$ and $-(c+d)^2 \leq -2(cd-ab) \leq (c-d)^2,$ then
\begin{align*}
    |r_{13}|+|r_{23}| = 0.
\end{align*}
If $-(b+d)^2 \leq -2(bd-ac)$ and $(c-d)^2 \leq -2(cd-ab),$ then
\begin{align*}
    |r_{13}|+|r_{23}| = - 2(cd-ab) - (c-d)^2 \leq 2ab \leq a^2+b^2 \leq |\omega_0|^2.
\end{align*}

(d) For $\omega \in \Lambda^3 V^{*}$ pick $O \in SO(V)$ and $t \in \R$ such that $O \cdot R_t \omega = \omega_0$ according to Theorem \ref{NormalForm}. By parts (a) -- (c) there exists $\eta_{\omega_0} \in \Lambda^4V^{*}$ such that 
    \begin{align*}
        0 \leq \mathcal{A}_{\omega_0} + \widehat{\eta_{\omega_0}} \leq 2 | \omega_0 |^2 \id = 2 |\omega|^2 \id.
    \end{align*}
    This implies 
        $0 \leq O^{-1} \cdot \mathcal{A}_{\omega_0}( O \cdot \ ) +  O^{-1} \cdot \widehat{\eta_{\omega_0}}( O \cdot \ ) \leq 2 |\omega|^2 \id$
    and hence \eqref{ActionEquivariance} and Proposition \ref{PropertiesRotation} yield
       \begin{align*}
        0 \leq \mathcal{A}_{\omega} +  \widehat{\eta_{\omega}} \leq 2 |\omega|^2 \id
    \end{align*}
    with $\eta_{\omega}=O^{-1} \cdot \eta_{\omega_0}$.
\end{proof}

\begin{example} \normalfont \label{ExampleSharpCandidate} For $2 \leq p \leq n/2$ there is $\omega \in \Lambda^p V^{*}$ such that 
\begin{align*}
     \mathcal{A}_{\omega} + \hat{\eta} \geq 0 
\end{align*}
for some $\eta \in \Lambda^4V^*$ implies that the maximal eigenvalue of $\mathcal{A}_{\omega} + \hat{\eta}$ is at least $2 |\omega|^2.$ Moreover, for $\eta=0$ the maximal eigenvalue is exactly $2 |\omega|^2.$ In particular, the eigenvalue estimates in Conjecture \ref{ConjectureControllingLichnerowicz} cannot be improved. Specifically, let $e_1, \ldots, e_4$ be a positively oriented basis for the subspace $U \subseteq V$ and set $\alpha = \frac{1}{\sqrt{2}} \left( e^1 \wedge e^2 + e^3 \wedge e^4 \right).$ Consider $\omega = \alpha \wedge e^5 \wedge \ldots \wedge e^{p+2}$. Set $E = \operatorname{span} \{ e_5, \ldots, e_{p+2} \}$ and $F= \operatorname{span} \{ e_{p+3}, \ldots, e_n \}.$ Then we obtain the orthogonal decomposition
\begin{align*}
    \Lambda^2V = \Lambda^2U \oplus \left( U \wedge E \right) \oplus \left( U \wedge F \right) \oplus \Lambda^2E \oplus \left( E \wedge F \right) \oplus \Lambda^2F.
\end{align*}
    Note that $\Lambda^2U$ further decomposes into self-dual and anti-self-dual forms. If
\begin{align*}
    S_1 & = \frac{1}{\sqrt{2}} \left( e^{12} +  e^{34} \right), \ S_2 = \frac{1}{\sqrt{2}} \left( e^{13} - e^{24} \right), \ S_3 = \frac{1}{\sqrt{2}} \left( e^{14} + e^{23} \right), \\
    A_1 & = \frac{1}{\sqrt{2}} \left( e^{12} -  e^{34} \right), \ A_2 = \frac{1}{\sqrt{2}} \left( e^{13} + e^{24} \right), \ A_3 = \frac{1}{\sqrt{2}} \left( e^{14} - e^{23} \right),
\end{align*}
then a computation yields $\mathcal{A}_{\omega} = 2 \id$ on $\operatorname{span} \{ S_2, S_3 \}$ and $\mathcal{A}_{\omega}=0$ on the orthogonal complement in $\Lambda^2U$. Moreover, $\mathcal{A}_{\omega}$ respects the above decomposition of $\Lambda^2V$ and 
\begin{align*}
    (\mathcal{A}_{\omega})_{|U \wedge E} & = \frac{1}{2}\id_{U \wedge E}, \
    (\mathcal{A}_{\omega})_{|U \wedge F}  = \frac{1}{2}\id_{U \wedge F}, \\
    (\mathcal{A}_{\omega})_{|E \wedge F} & = \id_{E \wedge F}, \ (\mathcal{A}_{\omega})_{|\Lambda^2E \oplus \Lambda^2F} = 0.
\end{align*}

Now suppose that $0 \leq \mathcal{A}_{\omega} + \hat{\eta}$ for $\eta \in \Lambda^4V^{*}.$ Consider the restriction to $\Lambda^2U.$ As $U$ is $4$-dimensional, $\eta_{|U} = c \operatorname{vol}_U$ for some constant $c \in \R.$ Testing $ 0 \leq \mathcal{A}_{\omega} + \hat{\eta}$ against $S_1$ and $A_1$ yields $c=0.$ Hence, the largest eigenvalue of $\mathcal{A}_{\omega} + \hat{\eta}$ is at least $2$ and no $\eta$ can improve the estimate in Conjecture \ref{ConjectureControllingLichnerowicz}.
\end{example}

The \textit{proof of Theorem \ref{TheoremThirdBettiNumber}} is analogous to the proof of Theorem \ref{GeneralRigidityTheorem}. Instead of Theorem \ref{EpsilonImprovement} use Theorem \ref{ThorpeTrick36} to show that for every harmonic $3$-form $\omega$ there exists $\eta \in \Lambda^4T^*M$ such that $0 \leq \mathcal{A}_{\omega} + \hat{\eta} \leq 2 | \omega |^2 \id_{\Lambda^2TM}.$ Hence, Theorem \ref{SharpnessCriterion} applies with $k=\frac{9}{2}$ and an application of the Bochner technique as in the proof of Theorem \ref{GeneralRigidityTheorem} yields the result. $\hfill \Box$\vspace{2mm}

\textit{Proof of Theorem \ref{SixDimHomologySpheres}}. Suppose that the oriented $6$-dimensional Riemannian manifold $(M,g)$ has $4$-positive curvature operator. By Theorem \ref{TheoremThirdBettiNumber}, the third Betti number vanishes. By \cite[Theorem A]{PetersenWinkNewCurvatureConditionsBochner}, the second Betti number vanishes. The first Betti number vanishes since $(M,g)$ has positive Ricci curvature and hence Bochner's original work \cite{BochnerVectorFieldsAndRic} applies. As $(M,g)$ is orientable, Poincar\'e duality applies. $\hfill \Box$


\begin{thebibliography}{NPW23}

\bibitem[Ber61]{BergerTwoFormsVanishCurvOperator}
Marcel Berger, \emph{Sur les vari\'{e}t\'{e}s \`a op\'{e}rateur de courbure positif}, C. R. Acad. Sci. Paris \textbf{253} (1961), 2832--2834.

\bibitem[BM18]{BettiolMendesStronglyPositiveCurvature}
Renato~G. Bettiol and Ricardo A.~E. Mendes, \emph{Strongly positive curvature}, Ann. Global Anal. Geom. \textbf{53} (2018), no.~3, 287--309.

\bibitem[Boc46]{BochnerVectorFieldsAndRic}
S.~Bochner, \emph{Vector fields and {R}icci curvature}, Bull. Amer. Math. Soc. \textbf{52} (1946), 776--797.

\bibitem[Bre08]{BrendleConvergenceInHigherDimensions}
Simon Brendle, \emph{A general convergence result for the {R}icci flow in higher dimensions}, Duke Math. J. \textbf{145} (2008), no.~3, 585--601.

\bibitem[Bre19]{BrendleRFwithSurgeryPIC}
\bysame, \emph{Ricci flow with surgery on manifolds with positive isotropic curvature}, Ann. of Math. (2) \textbf{190} (2019), no.~2, 465--559.

\bibitem[Bry06]{BryantGeometryAlmostComplexSixManifolds}
Robert~L. Bryant, \emph{On the geometry of almost complex 6-manifolds}, Asian J. Math. \textbf{10} (2006), no.~3, 561--605.

\bibitem[BS09]{BrendleSchoenSphereTheorem}
Simon Brendle and Richard Schoen, \emph{{Manifolds with 1/4-pinched curvature are space forms}}, J. Amer. Math. Soc. \textbf{22} (2009), no.~1, 287--307.

\bibitem[BW08]{BW2}
Christoph B\"ohm and Burkhard Wilking, \emph{Manifolds with positive curvature operators are space forms}, Ann. of Math. (2) \textbf{167} (2008), no.~3, 1079--1097.

\bibitem[CGT23]{CaoGurskyTranNishikawaConjecture}
Xiaodong Cao, Matthew~J. Gursky, and Hung Tran, \emph{Curvature of the second kind and a conjecture of {N}ishikawa}, Comment. Math. Helv. \textbf{98} (2023), no.~1, 195--216.

\bibitem[Che91]{ChenQuarterPinching}
Haiwen Chen, \emph{Pointwise {$\frac14$}-pinched {$4$}-manifolds}, Ann. Global Anal. Geom. \textbf{9} (1991), no.~2, 161--176.

\bibitem[Che26]{ChenPIC}
Zhengnan Chen, \emph{{Manifolds with positive isotropic curvature of dimension at least nine}}, https://arxiv.org/abs/2410.21078 (2026).

\bibitem[Cho26]{ChoPIC}
Jae~Ho Cho, \emph{{Pinching cones for positive isotropic curvature in dimensions seven and eight}}, https://arxiv.org/abs/2608.26598 (2026).

\bibitem[CTZ12]{ChenTangZhuClassFourPIC}
Bing-Long Chen, Siu-Hung Tang, and Xi-Ping Zhu, \emph{Complete classification of compact four-manifolds with positive isotropic curvature}, J. Differential Geom. \textbf{91} (2012), no.~1, 41--80.

\bibitem[CW26]{ChowWangMinimalTwoSpheresPIC}
Tsz-Kiu~Aaron Chow and Yipeng Wang, \emph{{Minimal Two-Spheres and Manifolds with Positive Isotropic Curvature}}, https://arxiv.org/abs/2609.06910 (2026).

\bibitem[CZ06]{ChenZhuRFwithSurgeryFourPIC}
Bing-Long Chen and Xi-Ping Zhu, \emph{Ricci flow with surgery on four-manifolds with positive isotropic curvature}, J. Differential Geom. \textbf{74} (2006), no.~2, 177--264.

\bibitem[Gal81]{GallotSobolevEstimates}
Sylvestre Gallot, \emph{Estim\'{e}es de {S}obolev quantitatives sur les vari\'{e}t\'{e}s riemanniennes et applications}, C. R. Acad. Sci. Paris S\'{e}r. I Math. \textbf{292} (1981), no.~6, 375--377.

\bibitem[GM75]{GallotMeyerCurvOperatorAndForms}
S.~Gallot and D.~Meyer, \emph{Op\'{e}rateur de courbure et laplacien des formes diff\'{e}rentielles d'une vari\'{e}t\'{e} riemannienne}, J. Math. Pures Appl. (9) \textbf{54} (1975), no.~3, 259--284.

\bibitem[GVZ11]{GroveVerdianiZillerExoticT1S4Pos}
Karsten Grove, Luigi Verdiani, and Wolfgang Ziller, \emph{An exotic {$T_1\mathbb{S}^4$} with positive curvature}, Geom. Funct. Anal. \textbf{21} (2011), no.~3, 499--524.

\bibitem[Ham82]{Hamilton3DimRF}
Richard~S. Hamilton, \emph{{Three-manifolds with positive Ricci curvature}}, J. Differential Geom. \textbf{17} (1982), 255--306.

\bibitem[Ham86]{Hamilton4DimRFposCurvOp}
\bysame, \emph{{Four-manifolds with positive curvature operator}}, J. Differential Geom. \textbf{24} (1986), 153--179.

\bibitem[Ham97]{HamiltonFourPIC}
\bysame, \emph{Four-manifolds with positive isotropic curvature}, Comm. Anal. Geom. \textbf{5} (1997), no.~1, 1--92.

\bibitem[Hit00]{HitchinThreeFormsSixDim}
Nigel Hitchin, \emph{The geometry of three-forms in six dimensions}, J. Differential Geom. \textbf{55} (2000), no.~3, 547--576.

\bibitem[Hua25]{HuangManifoldsPIC}
Hong Huang, \emph{{Compact manifolds of dimension $n \geq 12$ with positive isotropic curvature}}, arXiv:1909.12265 (2025).

\bibitem[Li80]{LiSobolevConstant}
Peter Li, \emph{On the {S}obolev constant and the {$p$}-spectrum of a compact {R}iemannian manifold}, Ann. Sci. \'{E}cole Norm. Sup. (4) \textbf{13} (1980), no.~4, 451--468.

\bibitem[Li25]{LiSphereTheoremsSecondKind}
Xiaolong Li, \emph{New sphere theorems under curvature operator of the second kind}, J. Lond. Math. Soc. (2) \textbf{112} (2025), no.~5, Paper No. e70356, 30.

\bibitem[Mey71]{DMeyerCurvOpPos}
Daniel Meyer, \emph{Sur les vari\'{e}t\'{e}s riemanniennes \`a op\'{e}rateur de courbure positif}, C. R. Acad. Sci. Paris S\'{e}r. A-B \textbf{272} (1971), A482--A485.

\bibitem[MM88]{MicallefMoorePIC}
Mario~J. Micallef and John~Douglas Moore, \emph{Minimal two-spheres and the topology of manifolds with positive curvature on totally isotropic two-planes}, Ann. of Math. (2) \textbf{127} (1988), no.~1, 199--227.

\bibitem[MW93]{MicallefWangNIC}
Mario~J. Micallef and McKenzie~Y. Wang, \emph{Metrics with nonnegative isotropic curvature}, Duke Math. J. \textbf{72} (1993), no.~3, 649--672.

\bibitem[NPW23]{NienhausPetersenWinkNewCurvatureOperatorSecondKind}
Jan Nienhaus, Peter Petersen, and Matthias Wink, \emph{Betti numbers and the curvature operator of the second kind}, J. Lond. Math. Soc. (2) \textbf{108} (2023), no.~4, 1642--1668.

\bibitem[Pet16]{PetersenRiemGeom}
Peter Petersen, \emph{{Riemannian Geometry}}, third ed., Graduate Texts in Mathematics, vol. 171, Springer, 2016.

\bibitem[Poo80]{PoorHolonomyProofPosCurvOperatorThm}
W.~A. Poor, \emph{A holonomy proof of the positive curvature operator theorem}, Proc. Amer. Math. Soc. \textbf{79} (1980), no.~3, 454--456.

\bibitem[P{\"u}t99]{PuettmannOptimalPinchingConstants}
Thomas P{\"u}ttmann, \emph{Optimal pinching constants of odd-dimensional homogeneous spaces}, Invent. Math. \textbf{138} (1999), no.~3, 631--684.

\bibitem[PW21a]{PetersenWinkNewCurvatureConditionsBochner}
Peter Petersen and Matthias Wink, \emph{New curvature conditions for the {B}ochner technique}, Invent. Math. \textbf{224} (2021), no.~1, 33--54.

\bibitem[PW21b]{PetersenWinkHodgeNumbers}
\bysame, \emph{Vanishing and estimation results for {H}odge numbers}, J. Reine Angew. Math. \textbf{780} (2021), 197--219.

\bibitem[Rei07]{ReichelDissertation}
Wilfried Reichel, \emph{{\"Uber die Trilinearen Alternierenden Formen in $6$ und $7$ Ver\"anderlichen}}, Dissertation, Greifswald (1907).

\bibitem[Roc70]{RockafellarConvexAnalysis}
R.~Tyrrell Rockafellar, \emph{Convex analysis}, Princeton Mathematical Series, vol. No. 28, Princeton University Press, Princeton, NJ, 1970.

\bibitem[Tho72]{ThorpeCurvatureTensorPosCurvedFourMf}
John~A. Thorpe, \emph{On the curvature tensor of a positively curved {$4$}-manifold}, Proceedings of the {T}hirteenth {B}iennial {S}eminar of the {C}anadian {M}athematical {C}ongress ({D}alhousie {U}niv., {H}alifax, {N}.{S}., 1971), {V}ol. 2, Canad. Math. Congr., Montreal, QC, 1972, pp.~156--159.

\end{thebibliography}

\end{document}